\documentclass{amsart}
\usepackage{amichai}

\title{Small ideals in polynomial rings and applications}
\author{Amichai Lampert}
\thanks{The author is supported by the National Science Foundation under Grant No. DMS-1926686.}
\address{Institute for Advanced Study, Princeton, NJ, USA and University of Michigan, Ann Arbor, Mi, USA}
\email{amichai.lam@gmail.com}

\begin{document}

\maketitle

\begin{abstract}

Let $\k$ be a field which is either finite or algebraically closed and let $R = \k[x_1,\ldots,x_n].$ We prove that any $g_1,\ldots,g_s\in R$ homogeneous of positive degrees $\le d \le \ch(\k)$ are contained in an ideal generated by an $R_t$-sequence of $\le A(d)(s+t)^{B(d)}$ homogeneous polynomials of degree $\le d.$ When $\k$ is algebraically closed we can relax the degree restriction to $\ch(\k) =0 $ or $d\le\ch(\k)+1.$ This yields effective bounds for Ananyan and Hochster's theorem on strength and the codimension of the singular locus \cite[Theorem A]{AH} in new cases. It also implies effective bounds in the case $d=\ch(\k)$ for Tao and Ziegler's result \cite{inv-p} on rank of polynomials over finite fields and their $U^d$ Gowers norm.   
\end{abstract}

\section{Introduction}

In \cite{Sch}, Schmidt introduced the following notion of rank for polynomials:

\begin{definition}
    Let $\k$ be a field and $f\in\poly$ a homogeneous form. The \emph{Schmidt rank} of $f$ is
    \[
    \rk(f) = \min\left\{r: f = \sum_{i=1}^r g_ih_i\right\}
    \]
    where $g_i,h_i\in\poly$ are forms of lower degree. For a collection $f_1,\ldots,f_s\in\poly$ of forms of the same degree,
    \[
    \rk(f_1,\ldots,f_s) = \min \left\{\rk\left(\sum_{i=1}^s a_if_i\right): a_1,\ldots,a_s\in\k \textnormal{ not all zero}\right\}
    \]
\end{definition}

Schmidt obtained results on the asymptotic number of integer solutions to systems of rational polynomial equations, assuming those systems are of sufficiently large rank. These results were extended to several other settings in \cite{SK}, \cite{CM-primes}, \cite{Lee} and \cite{LZSchmidt}.

Independently of Schmidt's work, Green and Tao showed that high rank polynomials over finite fields are equidistributed as functions \cite{GT-bias}. This set off a flurry of research improving and extending their result in various ways, see \cite{BL}, \cite{KL08}, \cite{M-bias}, \cite{J-bias}, \cite{CM-bias} and \cite{MZ}. Recently, Schmidt rank\footnote{Ananyan and Hochster use the name \emph{strength}.} appeared as a crucial tool in Ananyan and Hochster's proof of Stillman's conjecture \cite{AH} and in a subsequent proof \cite{ESS}. 

In many of these results a regularization process is used in which an arbitrary collection of polynomials is replaced by a high rank collection which can then be analyzed. The number of high rank polynomials obtained through this process grows very quickly with the initial number of polynomials, and this is a significant barrier to obtaining reasonable bounds.

Our goal in this paper is to develop the notion of \emph{relative rank} for polynomials and to show that it retains many of the good properties of Schmidt rank while allowing for an efficient regularization process. Relative rank is a generalization of Schmidt rank and was introduced by the author and Ziegler in \cite{rel}, where collections of high relative rank over a finite field $\k$ were shown to be equidistributed as functions, as long as $\ch(\k)$ is larger than all the degrees involved. This has many consequences described in that paper, and can in fact be used to prove a similar result to theorem \ref{main} \cite[Theorem 2.6]{rel}. We will present a simpler, more direct proof, which also yields a result which is less restrictive on the degrees in positive characteristic. We now state our main technical result.

\begin{theorem}[Small ideals]\label{main}
    For every positive integer $d$ there exist constants $A(d),B(d)$ such that the following holds. Let $\k$ be a field and $g_1,\ldots,g_s\in\poly$ a collection of forms of positive degrees $ \le d.$ Suppose either of the following holds:
    \begin{itemize}
        \item $\k$ is algebraically closed of characteristic zero or $d\le \textnormal{char}(\k)+1,$ or   
        \item $\k$ is a finite field and $d\le \textnormal{char}(\k).$
    \end{itemize}

    Then there exist forms $G_1,\ldots,G_S\in \poly$ of degrees $\le d$ such that:
    \begin{enumerate}
        \item $g_1,\ldots,g_s \in I(G_1,\ldots,G_S),$
        \item  $S \le A(s+t)^B$ and
        \item $G_1,\ldots,G_S$ are an $R_t$-sequence.
    \end{enumerate}    
\end{theorem}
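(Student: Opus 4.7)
The plan is to prove Theorem \ref{main} by induction on the maximum degree $d$ among the $g_i$. The base case $d=1$ is immediate: take $G_1,\ldots,G_S$ to be a basis of the $\k$-span of the linear forms, which is automatically an $R_t$-sequence with $S \le s$.

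For the inductive step, I would run a greedy regularization. Maintain a working collection $\mathcal{H} = (H_1,\ldots,H_m)$ ordered by increasing degree whose generated ideal contains $g_1,\ldots,g_s$; initialize with the $g_i$'s, reordered. Whenever an initial segment $H_1,\ldots,H_j$ is an $R_t$-sequence but the extended segment through $H_{j+1}$ is not, one obtains a decomposition
\[
H_{j+1} \equiv \sum_{k=1}^{t} p_k q_k \pmod{(H_1,\ldots,H_j)},
\]
with positive-degree factors $p_k,q_k$ of degree strictly less than $\deg H_{j+1}$. Replace $H_{j+1}$ by the $2t$ polynomials $p_k, q_k$; the ideal generated by $\mathcal{H}$ is unchanged. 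Termination follows from a well-founded decrease of the multiset of degrees appearing in $\mathcal{H}$.

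To bound the final size by $A(d)(s+t)^{B(d)}$, I would organize the procedure top-down: first process all replacements at degree $d$ until the degree-$d$ part is $R_t$-regular, then pass to degree $d-1$, and so on. Each replacement trades one polynomial for at most $2t$ polynomials of strictly lower degree, so processing a given degree inflates the count of lower-degree polynomials by a factor at most $O(t)$. Recursing through the $d$ degree levels via the inductive hypothesis yields a bound of the desired form $A(d)(s+t)^{B(d)}$.

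The main obstacle will be bridging Schmidt rank in $R$ with Schmidt rank modulo the ideal $(H_1,\ldots,H_j)$: the $R_t$-sequence property is defined via rank in a quotient ring, whereas any decomposition we extract naturally lives in $R$. A quantitative comparison between the two is needed, and this is where the characteristic hypothesis enters. If $d > \ch(\k)$, Frobenius-type phenomena (forms becoming $p$-th powers) allow rank to collapse dramatically in quotients; the assumption $d \le \ch(\k)$, loosened to $d \le \ch(\k)+1$ in the algebraically closed case by a finer algebraic-geometric argument, ensures that strength in a quotient by an $R_{t'}$-sequence for $t'$ much larger than $t$ is comparable to strength in $R$ itself, with a penalty depending only on $j$ and $d$.
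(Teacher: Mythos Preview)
Your overall shape---greedy regularization with a polynomial size bound---matches the paper's Lemma~\ref{regularize}. The genuine gap is the sentence ``one obtains a decomposition $H_{j+1}\equiv\sum_{k=1}^t p_kq_k\pmod{(H_1,\ldots,H_j)}$.'' The $R_t$-sequence property is \emph{not} defined via rank in the quotient; it is Serre's condition $R_t$, i.e.\ the singular locus of $V(H_1,\ldots,H_{j+1})$ has codimension $>t$. Failure of $R_t$ therefore gives you a geometric statement about singularities, not an algebraic decomposition of $H_{j+1}$. Converting ``small-codimension singular locus relative to the previous layers'' into ``low relative rank'' is precisely the content of Theorem~\ref{rel-sing}, which occupies most of the paper (Sections~5--7: Taylor towers at generic points, Proposition~\ref{Taylor-reg} on regularity of Taylor towers, and the gluing argument of Proposition~\ref{gluing}). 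Even once this is established, the number of factors obtained is $A(t+s)^B$, not $t$; your linear bound $2t$ is too optimistic in the relative setting.

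The paper decouples the two ingredients in a way your proposal conflates. Lemma~\ref{regularize} regularizes with respect to \emph{relative rank}: if $\rk_{I(\cG_{<i_0})}(\cG_{i_0})$ is small, a decomposition exists by definition, and the combinatorics bounding $|\cG|$ goes through exactly as you sketch. The hard work is then the separate implication ``$(A,B,t)$-regular $\Rightarrow$ $R_t$-sequence'' (Corollary~\ref{reg-seq}, resting on Theorem~\ref{rel-sing}), and this is where the characteristic restriction enters---specifically in Proposition~\ref{Taylor-reg}, which fails for degrees above $\operatorname{char}(\k)$. Your final paragraph misidentifies the obstacle: it is not comparing rank in $R$ with rank in a quotient, but rather passing from the codimension of a singular locus to a rank bound at all.
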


The hypotheses of the theorem naturally lead to two follow-up questions:
\begin{question}
    Is the restriction on the degrees in positive characteristic necessary?
\end{question}
\begin{question}
    Does the same result hold for any perfect field $\k?$
\end{question} 

Theorem \ref{main} has two immediate applications.

\subsection{Codimension of the singular locus}
Assume for now that $\k$ is algebraically closed. Given a homogeneous polynomial $f\in\poly,$ we set \linebreak
$S(f) = (x:\nabla f(x) = 0)$ . Similarly, for a collection of forms
\[
S(f_1,\ldots,f_s) = (x:\nabla f_1(x),\ldots,\nabla f_s(x) \text{ are linearly dependent}).
\]
Let $c(f_1,\ldots,f_s) := \codim_{\A^n} S(f_1,\ldots,f_s).$ A simple argument gives $\rk(f) \ge c(f)/2.$ Ananyan and Hochster proved \cite[Theorem A]{AH} that if $f$ has degree $d$ then \linebreak
$\rk(f)\le F(c(f),d)$ for some function $F$ and asked for reasonable bounds for $F.$  In \cite{KLP} the author, Kazhdan and Polishchuk proved the following effective version:

\begin{theorem}\label{cf-thm}
Let $\k$ be algebraically closed and assume that $\ch(\k)$ does not divide $d$. Then
\[
 \rk(f)\le (d-1)\cdot c(f).
\]
More generally, for a collection $f_1,\ldots,f_s$ of forms of degree $d$ we have
$$ \rk(f_1,\ldots,f_s)\le (d-1)\cdot (c(f_1,\ldots,f_s)+s-1).$$
\end{theorem}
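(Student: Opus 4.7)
The plan is to split the proof into two parts: a reduction to the single-form case ($s=1$), and the single-form bound $\rk(f) \le (d-1)\, c(f)$. For the reduction, given forms $f_1,\ldots,f_s$ of degree $d$ with $c := c(f_1,\ldots,f_s)$, I would work with the incidence variety
\[
I = \bigl\{([a],x) \in \mathbb{P}^{s-1} \times \A^n : a_1 \nabla f_1(x) + \cdots + a_s \nabla f_s(x) = 0\bigr\}
\]
and its two projections $\pi_1 : I \to \mathbb{P}^{s-1}$ and $\pi_2 : I \to \A^n$. The fiber of $\pi_1$ over $[a]$ is $S(\sum_i a_i f_i)$, while $\pi_2(I) = S(f_1,\ldots,f_s)$ with fiber over $x$ a projective space of dimension $s-1-k(x)$, where $k(x)$ is the rank of the gradients $\nabla f_1(x),\ldots,\nabla f_s(x)$. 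A fiber-dimension count then shows that for generic $[a] \in \mathbb{P}^{s-1}$, $\codim_{\A^n} S(\sum_i a_i f_i) \le c + s - 1$. Applying the single-form bound to $f := \sum_i a_i f_i$ gives $\rk(f_1,\ldots,f_s) \le \rk(f) \le (d-1)(c+s-1)$, which is what we want.

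For the single-form case with $c = c(f)$, the main tool is Euler's identity $d \cdot f = \sum_{i=1}^n x_i \partial_i f$, valid under the hypothesis $\ch(\k) \nmid d$, which exhibits $f$ as an element of the Jacobian ideal $J = (\partial_1 f,\ldots,\partial_n f)$ of height $c$. Since $R = \poly$ is Cohen--Macaulay, after a generic linear change of coordinates one arranges that $\partial_1 f,\ldots,\partial_c f$ form a regular sequence and that $\sqrt{(\partial_1 f,\ldots,\partial_c f)} = \sqrt{J}$; in particular every $\partial_j f$ with $j > c$ lies in the radical of $J_0 := (\partial_1 f,\ldots,\partial_c f)$. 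The goal then is to produce a decomposition of $f$ with at most $(d-1) c$ products of lower-degree forms using only the good partials $\partial_1 f,\ldots,\partial_c f$. Euler's identity already provides an $n$-term expression, and the strategy is to iteratively rewrite each of the $n - c$ bad terms $x_j \partial_j f$ using the good partials, exploiting $\partial_j f \in \sqrt{J_0}$ and the regular sequence structure so that each good partial accumulates at most $d-1$ products.

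The hard part will be executing this elimination with the precise count $(d-1) c$: a direct appeal to effective Nullstellensatz yields bounds that are polynomial in both $c$ and $d$ rather than linear in $c$, and matching the sharp constant requires a delicate inductive bookkeeping that combines the Koszul resolution of $J_0$ with Euler-type syzygies and the degree grading in $R$. This combinatorial-homological step is the technical core of the single-form argument in \cite{KLP}.
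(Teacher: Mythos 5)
Your reduction of the general case to $s=1$ via the incidence variety is fine in outline (in fact you only need \emph{one} point $[a]$ lying under a top-dimensional component of $I$, not a generic one, since $\rk(f_1,\ldots,f_s)$ is a minimum over $a$). The genuine gap is in the single-form case, which is the entire content of the theorem and which you do not actually prove: you set up an elimination scheme and then concede that carrying it out is ``the technical core of the single-form argument in \cite{KLP}.'' Beyond the missing execution, the intermediate claims are problematic. For an ideal $J$ of height $c$ with $n$ generators, $c$ generic linear combinations of the generators do form a regular sequence, but the ideal $J_0$ they generate need not satisfy $\sqrt{J_0}=\sqrt{J}$: the variety $V(J_0)$ generically acquires extra components of codimension $c$ besides $V(J)$, so the remaining partials need not lie in $\sqrt{J_0}$ at all. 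And even granting radical membership, converting it into ideal membership with degree control is exactly the effective Nullstellensatz problem, which, as you yourself note, cannot produce a bound linear in $c$; invoking ``the Koszul resolution plus Euler-type syzygies'' does not identify a mechanism that would.

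The actual argument (from \cite{KLP}, encoded in this paper as Lemma \ref{deriv-ideal} and used in the proof of Theorem \ref{rk-sing}) is local rather than global, and avoids all of this. Let $X$ be an irreducible component of $S(f)$ of codimension $c=c(f)$, let $x_0\in X$ be a smooth point, and choose $g_1,\ldots,g_c\in I_X$ with linearly independent differentials at $x_0$. Each $\partial_i f$ is a homogeneous element of $I_X$ of degree $d-1$, so Lemma \ref{deriv-ideal} places it in the ideal generated by the Taylor components $(g_k)^j_{x_0}$ for $k\le c$ and $1\le j\le d-1$, i.e.\ by $(d-1)c$ forms of positive degree $<d$. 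Euler's formula $f=\frac{1}{d}\sum_i x_i\partial_i f$ (this is where $\ch(\k)\nmid d$ enters) then puts $f$ itself in that ideal, and extracting the degree-$d$ graded piece of such a membership relation exhibits $f$ as a sum of at most $(d-1)c$ products of two forms of positive degree, giving $\rk(f)\le(d-1)c(f)$ directly. No regular-sequence structure on the partials, no radical computation, and no Nullstellensatz-type degree bounds are needed; the key input is the Taylor-expansion lemma, which your proposal does not supply or replace.
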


Using theorem \ref{main} we obtain effective bounds in several more cases.  

\begin{theorem}[Rank and singularities]\label{rk-sing}
    Let $\k$ be an algebraically closed field of positive characteristic $p$ and $d\in\N$ such that either $d = p$ or $d =4$ and $p=2.$ Then there exist constants
    $A=A(d),B=B(d)$ such that for $f\in\poly$ homogeneous of degree $d$ we have  
    \[
    \rk(f) \le A(c(f))^B.
    \]
    More generally, for a collection of forms $f_1,\ldots,f_s\in\poly$ of common degree $d$ we have 
    \[
        \rk(f_1,\ldots,f_s) \le A(c(f_1,\ldots,f_s)+s-1)^B.
    \]
\end{theorem}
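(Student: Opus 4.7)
The plan is to reduce the multi-form statement to a single-form bound, and then to establish the single-form bound by combining Theorem~\ref{main} with the strategy of \cite{KLP} (Theorem~\ref{cf-thm}), substituting a new ingredient for Euler's identity in the cases where it degenerates.

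For the reduction, I would show that for generic $a \in \P^{s-1}$, $c(\sum_i a_i f_i) \le c(f_1, \ldots, f_s) + (s-1)$; combined with $\rk(f_1, \ldots, f_s) \le \rk(\sum_i a_i f_i)$ this reduces the problem to the single-form bound $\rk(f) \le A \cdot c(f)^B$ for $f$ of degree $d$. The codimension estimate is a dimension count on the incidence variety $Z = \{(a, x) \in \P^{s-1} \times \A^n : \sum_i a_i \nabla f_i(x) = 0\}$. Fibers of $Z \to \A^n$ are empty off $S(f_1, \ldots, f_s)$ and of dimension at most $s - 1$ on it, giving $\dim Z \le \dim S(f_1, \ldots, f_s) + (s - 1)$; the projection $Z \to \P^{s-1}$ is dominant (the origin lies in every $S(\sum_i a_i f_i)$) with generic fiber $S(\sum_i a_i f_i)$, so the fiber dimension theorem yields $\dim S(\sum_i a_i f_i) \le \dim S(f_1, \ldots, f_s)$.

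For the single-form bound, the partials $\partial_1 f, \ldots, \partial_n f$ are forms of degree $d - 1$, and the hypothesis $d = p$ or $d = 4, p = 2$ guarantees $d - 1 \le p + 1$, which is in range for Theorem~\ref{main} in the algebraically closed setting. Directly feeding all $n$ partials to Theorem~\ref{main} would produce a bound depending on $n$, so I would first extract, by a standard Bertini-type argument, a subfamily $h_1, \ldots, h_r$ of $r = O(c(f))$ linear combinations of the partials whose vanishing scheme has $S(f)$ as its underlying reduced variety. Applying Theorem~\ref{main} to $\{h_1, \ldots, h_r\}$ produces an $R_t$-sequence $G_1, \ldots, G_S$ of forms of degree $\le d - 1$ with $S \le A(c(f) + t)^B$ and $(h_1, \ldots, h_r) \subseteq (G_1, \ldots, G_S)$. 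If one can conclude $f \in (G_1, \ldots, G_S)$, then since every $G_j$ has degree $< d$ any expression $f = \sum_j q_j G_j$ has $\deg q_j \ge 1$, which is a Schmidt rank decomposition giving $\rk(f) \le S = O(c(f)^B)$.

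The main obstacle is precisely this last step. In the setting of Theorem~\ref{cf-thm} ($\ch(\k) \nmid d$), $f \in (G_1, \ldots, G_S)$ is secured via Euler's identity $d f = \sum_j x_j \partial_j f$, which places $f$ in the ideal of its partials, combined with a lifting step exploiting the $R_t$-sequence structure for $t$ sufficiently large. In our cases, $p \mid d$ makes Euler's identity degenerate to $0 = 0$, so a substitute is needed. The hypotheses $d = p$ or $d = 4, p = 2$ should be precisely what permits one: for $d = p$, I would try to use a Hasse-derivative expansion such as $f(x + y) - f(x) - f(y) = \sum_{1 \le |\alpha| \le p - 1} D^{(\alpha)} f(x) \cdot y^{\alpha}$ together with its differential consequences to put $f$ in the ideal of its partials modulo a controlled error absorbed by the $R_t$-sequence; for $d = 4, p = 2$, a direct calculation specific to quartic forms in characteristic $2$ should play the same role.
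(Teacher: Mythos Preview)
Your outline has the right architecture --- reduce to a single form, capture the partials in an ideal with $O(c(f))$ generators, then pass through Theorem~\ref{main} --- but the final step is where the real content lies, and the substitutes you sketch do not work.

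First a smaller issue: a Bertini argument producing $h_1,\ldots,h_r$ with $V(h_1,\ldots,h_r)_{\mathrm{red}}=S(f)$ gives only set-theoretic information; it does not place the individual partials $\partial_i f$ in the ideal $(h_1,\ldots,h_r)$, and that ideal containment is exactly what you need downstream. The paper handles this with Lemma~\ref{deriv-ideal} (Taylor expansion at a smooth point of an irreducible component of $S(f)$), which genuinely yields $\partial_i f\in I$ for an ideal $I$ generated by at most $(d-1)c(f)$ forms of degree $<d$.

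The essential gap is the Euler substitute. You aim to prove $f\in (G_1,\ldots,G_S)$, but in the cases at hand this is generally false: when $p\mid d$ the form $f$ need not vanish on $S(f)$ at all (think of $f=\sum x_i^p$), so $f$ cannot lie in any ideal whose zero set contains $S(f)$. The Hasse-derivative expansion does not rescue this, since the higher-order Hasse derivatives $D^{(\alpha)}f$ for $|\alpha|\ge 2$ are not in the ideal generated by the first partials. The paper therefore does \emph{not} try to put $f$ in the ideal. Instead (Corollary~\ref{Euler-p}) it applies Theorem~\ref{main} with $t=3$ to obtain an $R_3$-sequence $G_1,\ldots,G_S$ containing all $\partial_i f$, and then argues that the image of $f$ in $R=\poly/(G_1,\ldots,G_S)$ is \emph{reducible}. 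The point is that $R$ is a UFD by Grothendieck's theorem for $R_3$-sequences; were $\bar f$ irreducible it would be prime, so $V(G_1,\ldots,G_S,f)$ would be reduced of codimension $S+1$, yet every tangent space has codimension $\le S$ because $\nabla f$ already lies in $(G_1,\ldots,G_S)$ --- contradicting generic smoothness. Reducibility of $\bar f$ then gives $\rk(f)\le S+1$. This UFD/tangent-space argument (borrowed from Ananyan--Hochster) is the missing idea.
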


\subsection{Gowers norms}
The second application of theorem \ref{main} is to Gowers norms of polynomials over finite fields. Let $\k$ be a finite field and $f:\k^n\to\C$ some function. Given a vector $h\in\k^n,$ we define the (multiplicative) discrete derivative $\triangle_h f:\k^n\to\C$ by the formula $\triangle_h f(x) = f(x+h)\overline{f(x)}.$

\begin{definition}
    The \emph{Gowers uniformity norm} of $f$ is 
    \[
    \|f\|_{U^d} := |\E_{x,h_1,\ldots,h_d\in\k^n} \triangle_{h_1}\ldots\triangle_{h_d} f(x)|^{1/2^d},
    \]
    where $\E_{a\in A} = \frac{1}{|A|}\sum_{a\in A}$ for a finite set $A.$
\end{definition}

Gowers introduced these uniformity norms (for cyclic groups) in \cite{Gow} and used them in his analytic proof of Szemerédi's theorem on arithmetic progressions. They have since become a highly active subject of study with many applications, the most notable of which is Green and Tao's proof that the primes contain arbitrarily long arithmetic progressions \cite{GT-primes}. 

Let $\chi:\k\to\C$ be a non-trivial additive character. Given a polynomial \linebreak
$f\in\poly,$ we can compose with $\chi$ to obtain $\chi\circ f:\k^n\to\C.$ Beginning with the work of Green and Tao \cite{GT-bias}, and continuing through many improvements \cite{ShH}, \cite{BL}, \cite{M-bias}, \cite{J-bias}, \cite{CM-bias}, \cite{MZ}, it has been shown that if $f$ has degree $d<\ch(\k)$  and $\chi\circ f$ has large $U^d$ norm, then $f$ is low rank. In a breakthrough quantitative result \cite{M-bias}, Milićević proved:

\begin{theorem}
    Let $\k$ be a finite field, $\chi:\k\to\C$ a non-trivial additive character and $f\in\poly$ of degree $d<\ch(\k).$ There exist constants $A(d),B(d)$ such that if $\|\chi\circ f\|_{U^d} \ge |\k|^{-t}$ then 
    \[
    \rk(f) \le A(1+t)^B.
    \]
\end{theorem}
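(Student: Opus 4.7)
The plan is to use Theorem~\ref{main} to regularize $f$ as a polynomial in a short $R_t$-sequence of high-rank forms, and then to exploit equidistribution of such sequences to reduce the Gowers-norm hypothesis to a rank problem in very few variables.

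I would first unpack the Gowers norm in the standard way. Writing $\triangle_h(\chi\circ f)=\chi\circ D_hf$ for $D_hf(x):=f(x+h)-f(x)$, the hypothesis reads
\[
\bigl|\E_{x,h_1,\ldots,h_d\in\k^n}\chi\bigl(D_{h_1}\cdots D_{h_d}f(x)\bigr)\bigr|\ge|\k|^{-2^dt}.
\]
Since $\deg f=d<\ch(\k)$ the iterated derivative is constant in $x$ and equals $d!\,\Phi(h_1,\ldots,h_d)$, where $\Phi$ is the symmetric $d$-linear polarization of $f$; so $\Phi$ itself is $|\k|^{-2^dt}$-biased.

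Next I would apply Theorem~\ref{main} to $f$ with $R_t$-parameter $p(t)$ to be tuned, producing an $R_{p(t)}$-sequence $G_1,\ldots,G_S$ of forms of degrees $\le d$ with $S\le A'(1+t)^{B'}$ such that $f=\sum_iH_iG_i$. Iterating the theorem on the cofactors $H_i$ (or absorbing them into the sequence) one may assume $f=F(G_1,\ldots,G_S)$ for some polynomial $F$ in $S$ variables of weighted degree $\le d$. The crucial input is that $R_{p(t)}$-sequences are quantitatively equidistributed: the pushforward of the uniform measure on $\k^n$ under $x\mapsto(G_1(x),\ldots,G_S(x))$ should be $|\k|^{-cp(t)}$-close to uniform on $\k^S$. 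Applying this coordinatewise in the $2^d$-fold Gowers expansion gives
\[
\|\chi\circ F\|_{U^d}^{2^d}=\|\chi\circ f\|_{U^d}^{2^d}+O\bigl(|\k|^{-cp(t)}\bigr),
\]
so choosing $p(t)\gg 2^dt$ forces $F$ itself to have $U^d$ norm $\gtrsim|\k|^{-2^dt}$ as a polynomial in only $S$ variables.

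A polynomial in $S$ variables of degree $\le d$ trivially has rank $O(S^{d-1})$ via the decomposition $F=\sum_iy_iF_i(y)$, and pulling this back through $y_i\mapsto G_i$ yields $\rk(f)=O(\mathrm{poly}(t))$, provided the $G_i$ actually used have strictly lower degree than $d$. The main obstacle is the quantitative equidistribution step: one needs an explicit error term for $R_{p(t)}$-sequences with the right dependence on $p(t)$ and on the $\deg G_i$, strong enough to survive the full Gowers expansion and to handle all $2^d$ coordinates simultaneously, while keeping $p(t)$ only polynomial in $t$ so that the final rank bound is polynomial. Securing this — together with the iteration guaranteeing that the generators entering the final pullback have degree strictly below $d$ — is the technical heart of the plan, and is precisely what the relative-rank / $R_t$-sequence machinery developed in the paper is designed to enable.
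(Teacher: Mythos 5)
This statement is Mili\'cevi\'c's theorem, which the paper quotes from \cite{M-bias} as an external input and does not prove; the paper later uses its partition-rank form (large $U^d$ norm implies $\prk(F)\le A(1+t)^B$ for the polarization $F$) to deduce its own Theorem \ref{bias-rk} in the boundary case $d=\ch(\k)$. For $d<\ch(\k)$ the passage from the partition-rank statement to the Schmidt-rank statement is immediate: restrict the partition-rank decomposition of $F$ to the diagonal and divide by $d!$, which is invertible. Your proposal instead tries to derive the theorem from Theorem \ref{main}, and this cannot work.

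The fatal gap is that your argument never actually uses the Gowers-norm hypothesis to produce a decomposition of $f$ into lower-degree pieces. Theorem \ref{main} applied to the single form $f$ is perfectly happy to return the sequence $\cG=\{f\}$ itself when $f$ has high rank (the regularization in Lemma \ref{regularize} only modifies a layer when its relative rank is small), in which case $f\in I(G_1,\ldots,G_S)$ is the tautology $f=1\cdot f$, your polynomial $F$ is $y_1$, and the final pullback step gives no bound on $\rk(f)$ whatsoever. You flag this yourself in the proviso that ``the $G_i$ actually used have strictly lower degree than $d$,'' but securing that proviso \emph{is} the theorem: it is exactly the assertion that bias forces $f$ into an ideal generated by lower-degree forms, and nothing in your plan supplies it. A second, independent problem is the claimed transference $\|\chi\circ F\|_{U^d}^{2^d}=\|\chi\circ f\|_{U^d}^{2^d}+O(|\k|^{-cp(t)})$: Gowers norms are defined via additive shifts $x\mapsto x+h$, and the nonlinear map $x\mapsto(G_1(x),\ldots,G_S(x))$ does not intertwine the additive structures of $\k^n$ and $\k^S$, so equidistribution of the value map does not let you compare $U^d$ norms of $f$ and of $F$ in this way. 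The correct entry point for the bias hypothesis is the analytic bias-implies-low-partition-rank machinery of \cite{M-bias} (or \cite{GT-bias}, \cite{inv-p}), which is precisely what the paper imports rather than reproves.
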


In \cite{inv-p}, Tao and Ziegler proved that an analogous inequality holds qualitatively when $d=\ch(\k),$ but so far no effective bounds have been obtained. By combining theorem \ref{main} with the results of \cite{rel}, we are able to obtain polynomial bounds in these cases.

\begin{theorem}[Large $U^p$ norm implies bounded rank]\label{bias-rk}
    Let $\k$ be a finite field of characteristic $p,$  $\chi:\k\to\C$ a non-trivial additive character and $f\in \poly$ a polynomial of degree $p.$ There exist constants $A=A(p),B=B(p)$ such that if $\|\chi\circ f\|_{U^p} \ge |\k|^{-t}$ then 
    \[
    \rk(f) \le A (1+t)^B.
    \]
\end{theorem}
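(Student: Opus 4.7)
My plan is to adapt the strategy that \cite{rel} employed to establish the analogous inverse theorem in the easier range $d<\ch(\k)$, substituting theorem \ref{main} of the present paper for the small-ideal input used there. The reason the argument in \cite{rel} was confined to degrees $d<\ch(\k)$ is precisely that its small-ideal step required that strict inequality; theorem \ref{main} is engineered to accommodate the boundary case $d=\ch(\k)$ over finite fields, so the critical degree $d=p$ becomes accessible.

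First I would unpack the hypothesis $\|\chi\circ f\|_{U^p}\ge|\k|^{-t}$ into a bias bound
\[
|\E_{h_1,\ldots,h_p}\chi(\Phi(h_1,\ldots,h_p))|\ge|\k|^{-2^p t}
\]
for the iterated discrete derivative $\Phi(h_1,\ldots,h_p):=\partial_{h_1}\cdots\partial_{h_p}f$, which is a multilinear form of total degree $p$ in the $h_i$'s (and independent of $x$ because $\deg f=p$). Next I would apply theorem \ref{main} to $f$ with a regularity parameter $T$ to be tuned, obtaining an $R_T$-sequence of forms $G_1,\ldots,G_S$ of degrees $\le p$, with $S\le A(1+T)^B$ and $f\in I(G_1,\ldots,G_S)$. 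Writing $f=\sum_jF_jG_j$, the summands with $\deg G_j<p$ already contribute a bound $\le S$ to $\rk(f)$, while the summands with $\deg G_j=p$ collapse to a $\k$-linear combination $g:=\sum_jc_jG_j$ inside the top-degree part of the sequence. Either $g=0$, in which case $\rk(f)\le S$ at once, or $g$ is a nontrivial element of the degree-$p$ piece of an $R_T$-sequence.

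In the latter case I would invoke the equidistribution theory developed in \cite{rel}---formulated for $R_T$-sequences of degrees $\le\ch(\k)$, which is exactly the regime opened up by theorem \ref{main}---to conclude $\|\chi\circ g\|_{U^p}\le|\k|^{-c(T)}$ for some polynomially growing $c(T)$. Combining this with Gowers norm control of the low-rank remainder $f-g=\sum_{\deg G_j<p}F_jG_j$ (which can be made precise by applying theorem \ref{main} once more to the collection $\{f\}\cup\{F_j\}$ so that everything lives in a common $R_T$-sequence) yields $\|\chi\circ f\|_{U^p}\le|\k|^{-c'(T)}$. Choosing $T$ polynomial in $t$ then contradicts the hypothesis unless we were already in the alternative $g=0$, producing the desired bound $\rk(f)\le A'(1+t)^{B'}$. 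The principal obstacle is making the equidistribution step of \cite{rel} quantitatively effective at the critical degree $d=p$: the methods there are stated via relative rank under the strict hypothesis $d<p$, and lifting them to the $R_T$-sequence setting for $d=p$, without losing the polynomial dependence on $T$, is the technical crux that I expect to need the most careful work.
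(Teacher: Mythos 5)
There is a genuine gap, and it sits exactly where you place your ``technical crux.'' Your plan defers the entire difficulty to an equidistribution statement for $R_T$-sequences (or regular towers) at the critical degree $d=p$ over the finite field $\k$. That statement is not available: the results of \cite{rel} are proved under the strict hypothesis that all degrees are $<\ch(\k)$, and the present paper does not supply a degree-$p$ equidistribution theorem either --- it only proves the ideal-membership/$R_t$-sequence result (Theorem \ref{main}) and the regularity-over-$\bar\k$ transfer. Worse, an effective equidistribution statement at degree $p$ is essentially equivalent to the effective Tao--Ziegler inverse theorem you are trying to prove, so the plan is close to circular. There is also a smaller structural problem earlier in your argument: applying Theorem \ref{main} to the single polynomial $f$ yields no useful decomposition, since the theorem permits the output sequence to contain $f$ itself among its degree-$p$ generators (e.g.\ $\{f\}$ alone is an $R_T$-sequence whenever $f$ is sufficiently nonsingular), so the dichotomy ``$g=0$ or $g$ is a nontrivial top-degree combination'' does not by itself reduce the rank of $f$.

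The paper's actual route avoids equidistribution entirely. It invokes Mili\'cevi\'c's quantitative bias--partition-rank theorem for the polarization $F(h_1,\ldots,h_p)=\nabla_{h_1}\cdots\nabla_{h_p}f(0)$; this result has no characteristic restriction because it concerns multilinear forms. A low partition-rank decomposition of $F$, specialized along the diagonal, shows that every partial derivative $\partial_j f$ lies in an ideal generated by $O\bigl((1+t)^{O(1)}\bigr)$ forms of positive degree $<p$. The step you are missing is then Corollary \ref{Euler-p}: since $p\mid\deg f$, Euler's formula cannot recover $f$ from its derivatives, and the corollary substitutes for it by regularizing the ideal into an $R_3$-sequence via Theorem \ref{main} and running the Ananyan--Hochster argument (the quotient is a UFD by Grothendieck's theorem; if $f$ were irreducible there, the vanishing of its derivatives on $V(G_1,\ldots,G_S,f)$ would contradict generic smoothness). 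This gives $\rk(f)\le A(1+t)^B$ directly, with Theorem \ref{main} entering only through Corollary \ref{Euler-p}, not through any decomposition of $f$ itself.
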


The organization of the paper is as follows. In the next section we will use theorem \ref{main} to deduce theorems \ref{rk-sing} and \ref{bias-rk}. In section 3 we will define relative rank and state our main results about it for algebraically closed fields, and then in section 4 we will use them to prove theorem \ref{main} in the algebraically closed case. The remaining sections will be devoted to proving the results stated in section 3, before returning to finite fields in the final section.

\section{Deduction of theorems \ref{rk-sing} and \ref{bias-rk}} 

We start with a useful corollary of theorem \ref{main} which we will need for both proofs. This corollary follows by applying a clever argument from \cite[Proposition 2.6]{AH}, which we will use in several places throughout this paper.   

\begin{corollary}\label{Euler-p}
     Let $\k$ be a field of positive characteristic $p$ and $f\in\poly$ a form of degree $d.$ Suppose either 
     \begin{itemize}
         \item $p=2,d=4$ and $\k$ is algebraically closed or 
        \item $d=p$ and $\k$ is algebraically closed or finite.
     \end{itemize}
     There exist $C(d),D(d)$ such that if $g_1,\ldots,g_s$ are forms of positive degree $<d$ and $\frac{\partial f}{\partial x_i}\in I(g_1,\ldots,g_s)$ for all $i$ then $\rk(f)\le Cs^D.$ 
\end{corollary}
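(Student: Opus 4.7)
My plan is to reduce the hypothesis on $g_1,\ldots,g_s$ to that of an $R_t$-sequence via Theorem \ref{main}, and then extract a $p$-th power structure for $f$ by applying the Ananyan--Hochster argument of \cite[Proposition 2.6]{AH}.

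First I would fix a constant $t_0 = t_0(d)$ (whose value emerges from the next step) and apply Theorem \ref{main} to $g_1,\ldots,g_s$ with parameter $t = t_0$. Since the $g_i$ have degrees $< d$, the degree bound $d - 1$ satisfies the restriction of Theorem \ref{main} in each admissible case ($d - 1 \le p + 1$ when $\k$ is algebraically closed, and $d - 1 = p - 1 \le p$ when $\k$ is finite with $d = p$). This produces forms $G_1,\ldots,G_S$ of degrees $< d$, forming an $R_{t_0}$-sequence, with $S \le A(s + t_0)^B$, whose generated ideal $I$ contains every $g_i$ and hence every partial derivative $\partial f / \partial x_i$.

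The main step, and the principal obstacle, is the following consequence of the Ananyan--Hochster argument: provided $t_0$ is chosen large enough in terms of $d$, the vanishing of all $\partial f/\partial x_i$ modulo $I$ forces the existence of a form $h$ of degree $d/p$ and forms $H_1,\ldots,H_S$ of positive degrees such that
\[
f = h^p + \sum_{j=1}^S H_j G_j.
\]
In characteristic $0$ or when $p\nmid d$, Euler's identity $d f = \sum_i x_i\,\partial f/\partial x_i$ would make this trivial (with no $h^p$ needed); the assumption $p \mid d$ is precisely what blocks Euler, and a substitute is required. The substitute, furnished by \cite[Proposition 2.6]{AH}, rests on the fact that a sufficiently strong regularity hypothesis on $G_1,\ldots,G_S$ converts the derivation-theoretic vanishing of the partials into an honest $p$-th power congruence; this is a Jacobian-type phenomenon in positive characteristic. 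I would define $t_0(d)$ as the threshold output by this argument.

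To conclude, observe that in both admissible cases $h^p = h \cdot h^{p-1}$ is a product of two forms of positive degree $< d$ (degrees $1$ and $p-1$ if $d = p$; degrees $2$ and $2$ if $d = 4,\ p = 2$), so $\rk(h^p) \le 1$. Each summand $H_j G_j$ is likewise a product of two forms of positive degree $< d$ (since $\deg H_j = d - \deg G_j > 0$) and contributes at most $1$ to the rank. Combining these,
\[
\rk(f) \;\le\; 1 + S \;\le\; 1 + A(s + t_0)^B \;\le\; C\, s^D,
\]
with $C, D$ depending only on $d$, as required.
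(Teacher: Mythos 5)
There is a genuine gap, and it sits exactly where you flag ``the principal obstacle.'' Your first and last steps match the paper's proof (the paper applies theorem \ref{main} with the specific value $t=3$ and concludes $\rk(f)\le S+1$ from a two-term factorization modulo the ideal), but the middle step --- extracting a low-rank expression for $f$ from the hypothesis that all $\partial f/\partial x_i$ lie in $J=I(G_1,\ldots,G_S)$ --- is the entire content of the corollary beyond theorem \ref{main}, and you do not prove it: you assert that \cite[Proposition 2.6]{AH} outputs a congruence $f\equiv h^p \bmod J$ with $\deg h = d/p$. That claimed output is both unjustified and stronger than what the argument actually delivers. The argument (which the paper spells out) only yields that the image of $f$ in $\poly/J$ is \emph{reducible}, i.e.\ $f\equiv uv \bmod J$ for some forms $u,v$ of positive degree, with no control on their shape; mere reducibility already gives $\rk(f)\le S+1$, so the $p$-th power structure is neither available nor needed. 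Describing the mechanism as ``a Jacobian-type phenomenon converting derivation-theoretic vanishing into a $p$-th power congruence'' does not substitute for the actual chain of deductions.

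Concretely, the missing argument is: choose $t_0=3$ precisely so that Grothendieck's theorem \cite[Corollaire XI 3.14]{SGA2} makes $\poly/J$ a unique factorization domain; suppose for contradiction that $\bar f$ is irreducible, hence prime, so $(\poly/J)/(f)$ is reduced; since $\k$ is perfect this persists over $\bar\k$, so $X=V(G_1,\ldots,G_S,f)$ has codimension $S+1$ and ideal exactly $J+(f)$; but every partial of $f$ vanishes on $X$, so the Zariski tangent space at every point of $X$ is cut out by the differentials of the $G_i$ alone and has codimension $\le S$, contradicting generic smoothness of $X$. Without this (or an equivalent), your proposal reduces the corollary to a citation whose statement you have not verified and have in fact misstated; everything else in your write-up (the legitimacy of applying theorem \ref{main} to forms of degree $<d$ in each admissible case, and the final bookkeeping $\rk(f)\le 1+S\le Cs^D$) is fine and agrees with the paper.
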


Note that when the characteristic doesn't divide the degree of $f$ this follows trivially from Euler's formula since $f = \frac{1}{d}\sum_{i=1}^n x_i \frac{\partial f}{\partial x_i} \in I(g_1,\ldots,g_s).$

\begin{proof}[Proof of corollary]
    Applying theorem \ref{main}, we obtain forms $G_1,\ldots,G_S$ which are an $R_3$-sequence such that $S\le A(s+3)^B$ and $\frac{\partial f}{\partial x_i}\in I(G_1,\ldots,G_S)$ for all $i.$ Set \linebreak
    $J:=I(G_1,\ldots,G_S).$ We now claim that the image of $f$ in $R:=\poly/J$ is reducible, which implies $\rk(f) \le A(s+3)^B+1.$ To get a contradiction, assume it's irreducible. $\poly/J$ is a unique factorization domain by a theorem of Grothendieck \cite[Corollaire XI 3.14]{SGA2} (see \cite{R3} for a simpler proof). Therefore, the image of $f$ in $R$ is prime, and in particular $R/(f)$ is reduced. Since $\k$ is perfect, we get that $\bar\k[x_1,\ldots,x_n]/I(G_1,\ldots,G_S,f)$ is reduced \cite[\href{https://stacks.math.columbia.edu/tag/030U}{Tag 030U}]{stacks}. Therefore $X = V(G_1,\ldots,G_S,f)$ is an algebraic set with $\codim_{\A^n} X =S+1$ and ideal $I(X) = I(G_1,\ldots,G_S,f).$ But the derivatives of $f$ vanish identically on $X$ so the tangent space at any given point is cut out by the derivatives of $G_i,$ and therefore has codimension $\le S.$ This contradicts smoothness of the variety $V(J+(f))$ in a nonempty open subset.   
\end{proof}

Let us now prove theorem \ref{rk-sing}. We begin by introducing some notation for the Taylor expansion around a point. If $f\in \poly$ and $x_0\in \A^n$ then we write 
\[
f(x+x_0) = \sum_{j\ge 0} f^j_{x_0}(x),
\]
where each $f^j_{x_0}(x)$ is homogeneous of degree $j.$ Note that this is really a finite sum. Here is a key lemma from \cite{KLP}, which we will also use later in this paper. 

\begin{lemma}\label{deriv-ideal}
Let $X\subset \A^n$ be an irreducible closed subvariety of codimension $c$ and $x_0\in X$ a smooth point. 
Let $g_1,\ldots,g_c$ be a set of elements in the ideal $I_X$ of $X$, with linearly independent differentials at $x_0$.
Then any homogeneous $f\in I_X$ of degree $d$  belongs to the ideal in $\poly$ generated by
$((g_i)^j_{x_0})_{i=1,\ldots,c;1\le j\le d}$. 
\end{lemma}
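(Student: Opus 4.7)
The plan is to translate coordinates so $x_0 = 0,$ upgrade the inclusion $(g_1,\dots,g_c)\subseteq I_X$ to an equality in the local ring at $0,$ and then extract the degree-$d$ homogeneous component of a relation $f = \sum h_i g_i$ that holds in the completion.

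First I would apply a translation of coordinates to reduce to the case $x_0 = 0.$ This replaces each $g_i$ by $g_i(x+x_0),$ whose homogeneous components are exactly the $(g_i)^j_{x_0}.$ Writing $g_i = \sum_{j\ge 1} g_i^{(j)}$ for its homogeneous components (the constant term vanishes since $x_0\in X$), the conclusion becomes: $f$ lies in the ideal generated by $g_i^{(j)}$ for $1\le i\le c$ and $1\le j\le d$ inside $\poly.$

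The heart of the proof is to show $(g_1,\dots,g_c) = I_X$ in the local ring $R_0 := \mathcal{O}_{\A^n,0}.$ The hypothesis on differentials says the images of the $g_i$ in $\mathfrak{m}_0/\mathfrak{m}_0^2$ are linearly independent, so $R_0/(g_1,\dots,g_c)$ is a regular local ring of dimension $n-c.$ Smoothness of $X$ at $0$ makes $\mathcal{O}_{X,0} = R_0/(I_X)_0$ also regular of dimension $n-c,$ and irreducibility of $X$ ensures $I_X$ is prime. The induced surjection between Noetherian local domains of equal Krull dimension must be an isomorphism, since any nonzero element of its kernel would be regular and strictly drop the dimension. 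Hence $(g_1,\dots,g_c) = (I_X)_0$ in $R_0,$ and this equality passes to the completion $\widehat{R_0} = \k[[x_1,\dots,x_n]].$

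Finally, since $f\in I_X$ we can write $f = \sum_{i=1}^c h_i g_i$ with $h_i \in \k[[x_1,\dots,x_n]].$ Expanding $h_i = \sum_{j\ge 0} h_i^{(j)}$ and $g_i = \sum_{k\ge 1} g_i^{(k)}$ into homogeneous pieces and taking the degree-$d$ component on both sides (using that $f$ is itself homogeneous of degree $d$), we obtain $f = \sum_{i=1}^c \sum_{k=1}^{d} h_i^{(d-k)} g_i^{(k)}.$ Each $h_i^{(d-k)}$ is a single homogeneous polynomial rather than a true power series, so this is a genuine polynomial identity that exhibits $f$ in the desired ideal. I expect the main obstacle to be the local equality $(g_1,\dots,g_c) = (I_X)_0$: it requires packaging the Jacobian criterion (for regularity of $R_0/(g_1,\dots,g_c)$) together with the irreducibility of $X$ (to land inside domains) and the same-dimension surjection argument. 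The subsequent truncation in the completion is routine.
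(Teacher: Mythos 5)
Your proof is correct and is essentially the argument behind this lemma in \cite{KLP} (the present paper only quotes the statement without reproving it): localize at the smooth point to upgrade $(g_1,\ldots,g_c)\subseteq I_X$ to an equality of ideals in $\mathcal{O}_{\A^n,x_0}$ via the two-regular-local-domains-of-equal-dimension argument, then expand $f(x+x_0)=\sum h_i\,g_i(x+x_0)$ in the completion and extract the degree-$d$ homogeneous component, whose left-hand side is $f$ itself. The only nitpick is notational: after translating, the element of $I_{X-x_0}$ you decompose is $f(x+x_0)$, not $f$, and it is precisely the degree-$d$ truncation that recovers the original homogeneous $f$ — which your final step does handle correctly.
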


\begin{proof}[Proof of theorem \ref{rk-sing}]
    Write $c:= c(f).$ By the above lemma, we have $\frac{\partial f}{\partial x_i}\in I$ for all $i,$ where I is an ideal generated by $\le (d-1)c$ forms of positive degree $<d.$ By corollary \ref{Euler-p}, we get that $\rk(f)\le Ac^B.$  
\end{proof}

Now we move on to theorem \ref{bias-rk}. Let $V=\k^n$ and $F:V^p\to\k$ be the multilinear form (polarization) associated to $f,$ i.e. $F(h_1,\ldots,h_p) = \nabla_{h_1}\ldots\nabla_{h_p} f(0).$ For multilinear forms (tensors) there is a more nuanced version of rank.

\begin{definition}
    The \emph{partition rank} of $F,$ denoted $\prk(F),$ is the minimal $r$ with
    \[
    F = \sum_{i=1}^r G_i(x_{I_i}) H_i(x_{[p]\setminus I_i})
    \]
    where $\emptyset\neq I_i\subsetneq [p]$ and $G_i:V^{I_i}\to\k,H_i:V^{[p]\setminus I_i}\to\k$ are multilinear for all $i.$
\end{definition}

We use the following result of Milićević \cite{M-bias}.

\begin{theorem}
     Let $\k$ be a finite field, $\chi:\k\to\C$ a non-trivial additive character and $f\in\poly$ a homogeneous polynomial of degree $d.$ There exist constants $A(d),B(d)$ such that if $\|\chi\circ f\|_{U^d} \ge |\k|^{-t}$ then 
    \[
    \prk(F) \le A(1+t)^B.
    \]
\end{theorem}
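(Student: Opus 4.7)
The plan is to prove the theorem in two stages: a Gowers--Cauchy--Schwarz calculation converting the $U^d$-norm hypothesis into a bias statement for $F$, followed by an inverse theorem for biased $d$-tensors over a finite field.

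For the first stage, unwinding the definition of the Gowers norm using the identity $\triangle_h(\chi\circ g)=\chi\circ\triangle_h^+g$ yields
\[
\|\chi\circ f\|_{U^d}^{2^d}
= \bigl|\E_{x,h_1,\ldots,h_d\in\k^n}\chi\bigl(\triangle^+_{h_1}\cdots\triangle^+_{h_d}f(x)\bigr)\bigr|.
\]
For $f$ homogeneous of degree $d$, the $d$-fold additive difference is constant in $x$ and equals $F(h_1,\ldots,h_d)$, so the hypothesis translates into $|\E_{h_1,\ldots,h_d}\chi(F)|\ge|\k|^{-2^d t}$.

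For the second stage I would prove by induction on $d$ that $|\E\chi(F)|\ge|\k|^{-s}$ implies $\prk(F)\le A_0(d)(1+s)^{B_0(d)}$ for a multilinear form $F$ of order $d$. The base case $d=1$ is Fourier orthogonality: the exponential sum of a nonzero linear form vanishes. For the inductive step, Cauchy--Schwarz in the variable $h_1$ combined with the identity $|\E_{h_1}\chi(L(h_1))|=\mathbf{1}[L\equiv 0]$ for linear $L$ produces
\[
|\E\chi(F)|^2\le\Pr_{h_2,\ldots,h_d}\bigl[F(\cdot,h_2,\ldots,h_d)\equiv 0\bigr],
\]
so on a set $Z\subset(\k^n)^{d-1}$ of density $\ge|\k|^{-2s}$ the first-coordinate slice of $F$ vanishes identically. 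Since $Z$ is the joint vanishing locus of the $n$ order-$(d-1)$ forms $F(e_i,\cdot,\ldots,\cdot)$, a dependent-random-choice step extracts from these a single biased tensor of order $d-1$; the inductive hypothesis then provides its partition-rank decomposition, which one lifts back to a partition-rank decomposition of $F$ by peeling off $h_1$.

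The main obstacle, and the content of Milićević's breakthrough, is quantitative: a naive iteration of Cauchy--Schwarz and Fourier analysis yields tower-type bounds, as in prior work of Green--Tao and Bergelson--Tao--Ziegler. Obtaining polynomial bounds requires working with partition rank throughout (so that multilinear slicing interacts cleanly with the induction) together with a careful dependent-random-choice argument that confines all uniformity losses into a single controlled rank increment per inductive level.
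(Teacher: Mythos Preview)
The paper does not prove this theorem; it is quoted as a result of Mili\'cevi\'c and used as a black box in the deduction of Theorem~1.8. There is therefore no proof in the paper to compare your sketch against.

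As an outline of Mili\'cevi\'c's argument your sketch is broadly accurate but incomplete in the place that matters. The first stage is standard and correct: for $f$ homogeneous of degree $d$ the $d$-fold additive derivative is constant in $x$ and equals $F(h_1,\ldots,h_d)$, so the $U^d$-norm hypothesis becomes a bias lower bound $|\E\,\chi(F)|\ge |\k|^{-2^d t}$ on the multilinear form. The second stage --- showing that large bias forces low partition rank with polynomial dependence --- is where all the content lies. You have correctly identified the inductive skeleton (Cauchy--Schwarz in one variable, pass to slices, recurse) and correctly flagged that the polynomial-versus-tower issue is the crux. But the sentence ``a dependent-random-choice step extracts from these a single biased tensor of order $d-1$'' and the phrase ``lifts back'' are doing all the work and neither is justified: the naive version of this loop loses a polynomial factor in the rank at every level and compounds to a tower, and avoiding that is precisely the technical innovation that your sketch does not supply. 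At the level of a roadmap your description is fine; as a proof it is not, and the present paper makes no attempt to fill that gap.
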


\begin{proof}[Proof of theorem \ref{bias-rk}]
    By the above theorem, $\prk(F) \le A(1+t)^B.$ Let $F = \sum_{i=1}^r Q_i(x_{I_i}) R_i(x_{[p]\setminus I_i})$ with $p\not\in I_i$ for all $i$ and $r\le A(1+t)^B. $ Then for all $j$ we have $F(x_{[p-1]},e_j) \in I(Q_1,\ldots,Q_r).$ Plugging in the diagonal $x_{[p-1]} = (x,\ldots,x)$ we have the following for all $j$ 
    \[
    (p-1)! \partial_j f(x)=F(x,\ldots,x,e_j) \in I(Q_1(x,\ldots,x),\ldots,Q_r(x,\ldots,x)).
    \]
    By corollary \ref{Euler-p}, we get $\rk(f) \le At^B.$
\end{proof}

\section{Relative rank and singularities}

    We will define relative rank in the context of graded rings over a base field. The ones we will encounter are $\poly$ and its quotients by homogeneous ideals. Let $S = \bigoplus_{d\ge 0} S_d$ be a graded ring with $S_0=\k$ a field. For $f\in S$ homogeneous of positive degree, the definition of Schmidt rank naturally generalizes to
    \[
    \rk(f) = \inf\{r: f = \sum_{i=1}^r g_i\cdot h_i\}
    \]
    where $g_i,h_i$ are homogeneous of positive degree.


    \begin{definition}[Relative rank]
        Given a homogeneous ideal $I\subset S$ and a homogeneous element $f\in S$ of degree $d$, let $\bar f\in S/I$ be its image in the quotient ring. The \emph{relative rank} of $f$ on $I$ is 
        \[
        \rk_I(f) := \rk(\bar f) = \inf\{\rk(f+g): g\in I_d\}.
        \]
        For a collection of homogeneous elements $f_1,\ldots,f_s\in S_d,$ we define 
        \[
        \rk_I(f_1,\ldots,f_s) := \inf \{\rk_I(\sum_{i=1}^s a_if_i): a_1,\ldots,a_s\in \k \textnormal{ not all zero}\}.
        \]
    \end{definition}

    Note that we always have $\rk_I(f)\le \rk(f).$ If $I=(0)$ or is generated by homogeneous elements of degree $>d,$ then $\rk(f) = \rk_I(f).$
    
    We will be interested in collections of polynomials built up of layers with large relative rank. To describe this precisely, we need some definitions.

    \begin{definition}
		\begin{enumerate}
		  \item A \emph{tower} $\cF = (\cF_i)_{i\in [h]}$ of degree $\mathbf{d} = (d_1,\ldots,d_h)$ is composed of $h$ collections of homogeneous elements $\cF_i = (f_{i,j}) _{j\in[m_i]}$ which we call \emph{layers} such that the elements in each layer have common degree $d_i.$ The \emph{dimension} of the tower is $ |\cF| = m_1+\ldots+m_h. $ We denote the truncated tower $\cF_{<i} = (\cF_j)_{j\in [i-1]}.$ 
		\item The tower $ \cF $ is $(A,B,t)$-\emph{regular} if for every $i\in[h]$ we have  
			\[
			\rk_{I(\cF_{<i})} (\cF_i) > A(m_i+m_{i+1}+\ldots+m_h+t)^B. 
			\]
		\end{enumerate}
	\end{definition}

    Here is our main result connecting relative rank and the singular locus.
    \begin{theorem}[Relative rank and singularities]\label{rel-sing}
        Let $\k$ be an algebraically closed field and $\mathbf{d} = (d_1,\ldots,d_h),e$ a degree sequence such that either $\ch(\k) = 0$ or $\ch(\k) = p$ and $\mathbf{d}\le p, e \le p+2.$ There exist $A (\mathbf{d},e),B(\mathbf{d},e)$ such that whenever $\cF$ is an $(A,B,t)$-regular tower in $\poly$ of degree $\mathbf{d}$ and $f_1,\ldots,f_s$ are forms of degree $e$ with
        $\codim_{X}(S(f_1,\ldots,f_s,\cF)\cap X) \le t$ then we have 
        $$rk_{I(\cF)}(f_1,\ldots,f_s) \le A(t+s)^B.$$
    \end{theorem}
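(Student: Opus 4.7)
The plan is to argue by contrapositive. Suppose $\rk_{I(\cF)}(f_1,\ldots,f_s) > A(t+s)^B$; the aim is to derive that the codimension hypothesis must fail. The key move is to promote $(f_1,\ldots,f_s)$ to a new top layer of the tower. Let $\cF' := (\cF_1,\ldots,\cF_h,(f_1,\ldots,f_s))$ be the augmented tower of height $h+1$, degree $(\mathbf{d},e)$, and dimension $|\cF|+s$. The contrapositive hypothesis $\rk_{I(\cF)}(f_1,\ldots,f_s) > A(s+t)^B$ is exactly the regularity inequality required of the new layer. For the older layers $i\le h$, the definition now demands the larger bound $A(m_i+\cdots+m_h+s+t)^B$; one absorbs the extra $s$ into the constant by choosing $A$ and $B$ in the theorem sufficiently large relative to the target parameters needed below, so that the original $(A,B,t)$-regularity of $\cF$ supplies the needed slack. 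In this way $\cF'$ is $(A_0,B_0,t)$-regular for any pre-specified target pair $(A_0,B_0)$.

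The proof then hinges on a structural principle for regular towers: a sufficiently regular tower $\cF'$ cuts out a complete intersection $X' := V(\cF')$ of codimension $|\cF'|$, and its singular locus $S(\cF')\subset \A^n$ (the locus where the gradients of all polynomials in the tower fail to span a space of dimension $|\cF'|$) has codimension controlled by the regularity parameter. Applying this principle with the $(A_0,B_0)$ chosen appropriately, one obtains
\[
\codim_{\A^n} S(\cF') > |\cF| + t.
\]

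Combining the two steps yields the contradiction. By definition $S(\cF') = S(f_1,\ldots,f_s,\cF)$, so
\[
Y \;:=\; S(f_1,\ldots,f_s,\cF)\cap X \;\subseteq\; S(\cF'),
\]
which forces $\codim_{\A^n} Y > |\cF| + t$. The regularity of $\cF$ makes $X$ a complete intersection of codimension exactly $|\cF|$, whence $\codim_X Y > t$, contradicting the hypothesis $\codim_X Y \le t$. The main obstacle is clearly the structural principle invoked in the second step: one must extract from the abstract relative-rank condition defining a regular tower a concrete geometric bound on the codimension of its singular locus. This is precisely where the characteristic restrictions ($\mathbf{d}\le p$ and $e\le p+2$) are expected to enter, and where relative versions of Lemma \ref{deriv-ideal} and Corollary \ref{Euler-p}, presumably deployed in an induction on the tower's height, will carry the load.
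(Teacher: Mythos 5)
Your argument is circular. The ``structural principle'' you invoke in the second step --- that a sufficiently regular tower has singular locus of codimension exceeding the regularity parameter --- is precisely Corollary \ref{reg-seq}, and the paper deduces that corollary \emph{from} Theorem \ref{rel-sing} by exactly the contrapositive maneuver you perform: augment the tower by the new layer, note that high relative rank of the top layer is the regularity condition, conclude the singular locus has high codimension. Applied to the tower $\cF'=(\cF,(f_1,\ldots,f_s))$ at its top layer, the principle you need \emph{is} the statement ``if $\rk_{I(\cF)}(f_1,\ldots,f_s)$ is large then $\codim_X(S(f_1,\ldots,f_s,\cF)\cap X)>t$,'' i.e.\ the contrapositive of the theorem itself. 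So the proposal reduces the theorem to its own restatement and leaves all of the content unproved. The actual work in the paper occupies Sections 5--7 and runs in the opposite direction: reduce to a single $f$ via linear combinations; pass to the tangent tower $T_z\cF$ and pick a component $Y$ over a low-codimension component $Z$ of the singular locus; use Lemma \ref{deriv-ideal} at generic smooth points $w_0\in Y$ to place $\partial_z f$ in the ideal generated by the Taylor expansions of $T_z\cF$ at $w_0$ plus $(e-1)t$ extra low-degree forms (Proposition \ref{low-rank-der}); prove that the Taylor tower at generic basepoints is itself regular (Proposition \ref{Taylor-reg} --- this is where the characteristic restriction genuinely enters, via the failure of $f\mapsto f^i_{w}$ to preserve rank in degree $p+1$); glue the representations at $e+1$ basepoints using the fact that ideals generated by a regular sequence have intersection equal to product, which kills the unwanted Taylor terms; and finally descend from $\partial_z f$ to $f$ by the relative Euler lemma \ref{Euler-rel}. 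None of these steps is present or replaced in your outline.

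There is also a quantitative flaw in your first step. To make $\cF'$ an $(A_0,B_0,t)$-regular tower you need each old layer $i\le h$ to satisfy $\rk_{I(\cF_{<i})}(\cF_i)>A_0(m_i+\cdots+m_h+s+t)^{B_0}$, whereas the hypothesis only gives $>A(m_i+\cdots+m_h+t)^B$ with $A,B$ depending on $(\mathbf{d},e)$ alone. Since $s$ is a free parameter of the theorem, no choice of $A,B$ independent of $s$ dominates a bound that grows with $s$; the extra $s$ cannot be ``absorbed into the constant.'' (This is why the paper instead reduces to $s=1$ by writing $S(f_1,\ldots,f_s,\cF)$ as a union over $a\in\P^s$ of the loci $S(\sum_i a_if_i,\cF)$ and bounding the relative rank of a single linear combination.)
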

   
    In the next section we explain how theorem \ref{rel-sing} allows us to deduce theorem \ref{main}. Theorem \ref{rel-sing} implies that sufficiently regular towers satisfy Serre's condition $R_t.$ 
    
\begin{corollary}\label{reg-seq}
    Let $\cF$ be a tower in $\poly$ of degrees $\mathbf{d}=(d_1,\ldots,d_h),$ where either $\ch(\k)=0$ or $\ch(\k)=p$ and $d_i\le p, d_h\le p+2.$ Suppose $\cF$ is $(A,B,t)$-regular, where $A(d_1,\ldots,d_h),B(d_1,\ldots,d_h)$ are the constants from the above theorem. Then $\cF$ is an $R_t$-sequence.
\end{corollary}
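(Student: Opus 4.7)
The plan is to induct on $i \in \{0, 1, \ldots, h\}$, proving simultaneously that $\cF_{\le i}$ is a regular sequence and that $\poly / I(\cF_{\le i})$ satisfies Serre's condition $R_t$. The base case $i = 0$ is trivial since $\poly$ is regular. For the inductive step I would apply Theorem \ref{rel-sing} to the sub-tower $\cF_{<i}$ together with the layer $\cF_i$ of forms of common degree $d_i$, playing the role of the auxiliary forms in the theorem. Note that $\cF_{<i}$ inherits the $(A,B,t)$-regularity from $\cF$: each required inequality for $\cF_{<i}$ is weaker than the corresponding one for $\cF$, because $m_j+\ldots+m_{i-1} \le m_j+\ldots+m_h$. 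The degree restrictions of Theorem \ref{rel-sing} are satisfied under the corollary's hypothesis on $d_1,\ldots,d_h$, and the constants $A(\mathbf{d}),B(\mathbf{d})$ can be chosen large enough to dominate the constants required by Theorem \ref{rel-sing} at each level of the induction.

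At level $i$, the $(A,B,t)$-regularity of $\cF$ supplies $\rk_{I(\cF_{<i})}(\cF_i) > A(m_i+\ldots+m_h+t)^B \ge A(t+m_i)^B$, so the contrapositive of Theorem \ref{rel-sing} shows that $S(\cF_{\le i})\cap V(\cF_{\le i})$ has codimension strictly greater than $t$ inside $V(\cF_{\le i})$. Combined with the inductive hypothesis that $\cF_{<i}$ is a regular sequence whose quotient satisfies $R_t$, the high relative rank of $\cF_i$ modulo $I(\cF_{<i})$ will force each $f_{i,j}$ to be a non-zero-divisor modulo $I(\cF_{<i})+(f_{i,1},\ldots,f_{i,j-1})$. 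Hence $\cF_{\le i}$ is a regular sequence, $V(\cF_{\le i})$ has the expected codimension $m_1+\ldots+m_i$, and its smooth points coincide with the locus where the gradients of the $f_{j,k}$ ($j\le i$) are linearly independent. The codimension bound above is therefore the $R_t$ condition for $\poly/I(\cF_{\le i})$. Setting $i=h$ gives the corollary.

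I expect the main obstacle to be the regular-sequence step, i.e.\ turning high relative rank into the non-zero-divisor property. I would handle this in tandem with the $R_t$ step via an argument parallel to the one in the proof of Corollary \ref{Euler-p}, combining Grothendieck's UFD theorem with the inductive $R_t$-hypothesis to rule out unexpected associated primes at each intermediate quotient. A secondary bookkeeping issue is identifying the variety $X$ in the codimension hypothesis of Theorem \ref{rel-sing} with $V(\cF_{\le i})$ at each level of the induction, which is the interpretation under which the contrapositive directly furnishes the required $R_t$ property.
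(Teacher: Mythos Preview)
Your core idea matches the paper's proof exactly: apply the contrapositive of Theorem~\ref{rel-sing} at each level $i$, using that any truncation $\cF_{<i}$ of an $(A,B,t)$-regular tower is again $(A,B,t)$-regular, to conclude
\[
\codim_{X(\cF_{<i})}\bigl(S(\cF_{\le i})\cap X(\cF_{<i})\bigr) > t.
\]
The paper's proof is this one sentence; it takes the above codimension inequalities at every level as the defining property of an $R_t$-sequence, so there is nothing further to check.

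Two points of overcomplication in your proposal. First, note that Theorem~\ref{rel-sing} measures the codimension of $S(\cF_{\le i})$ inside $X(\cF_{<i})$, not inside $V(\cF_{\le i})$; the paper uses the former directly, so your ``bookkeeping issue'' evaporates. Second, your plan to establish the non-zero-divisor property via a Grothendieck UFD argument \`a la Corollary~\ref{Euler-p} is unnecessary: once the singular locus of $X(\cF_{\le i})$ has positive codimension in $X(\cF_{<i})$, the Jacobian has full rank on a dense open set, so $\cF_{\le i}$ cuts out the expected codimension and is a regular sequence by Cohen--Macaulayness of the polynomial ring. No UFD machinery is needed here.
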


\begin{proof}
    Since any subtower of $\cF$ is $(A,B,t)$-regular, it's enough that for all $i\in[h]$ we have
    \[
    \codim_{X(\cF_{<i})} S(\cF_{\le i})\cap X(\cF_{<i}) > t,
    \]
    and this is exactly the content of theorem \ref{rel-sing}.
\end{proof}

To prove theorem \ref{rel-sing} inductively we will need to work with a slightly more general version for forms which are bi-homogeneous. Given $f(x,y)$ bi-homogeneous we write $S_y(f) = ((x,y) : \frac{\partial f}{\partial y_i}(x,y) = 0\ \forall i).$ For a collection of bi-homogeneous functions $f_1,\ldots,f_s$ set 
\[
S_y(f) = ((x,y) : \rk\left(\frac{\partial f_i}{\partial y_j}\right)(x,y) < s).
\]

\begin{theorem}\label{rel-sing-y}
    Let $\k$ be an algebraically closed field. Let $\mathbf{d} = (d_1,\ldots,d_h),e$ such that either $\ch(\k) = 0$ or $\ch(\k) = p$ and $\mathbf{d},e\le p.$ There exist $A (\mathbf{d},e),B(\mathbf{d},e)$ such that the following holds. Let $\cF$ be a bi-homogeneous $(A,B,t)$-regular tower in $\k[x_1,\ldots,x_n,y_1,\ldots,y_n]$ of degree $\mathbf{d}$ with $\cF'\subset \cF$ the functions of positive degree in $y$ and let $f_1,\ldots,f_s$ be bi-homogeneous functions of degree $e$ and common bi-degree which is positive in $y.$ Suppose $ \codim_{X(\cF)} X(\cF)\cap S_y(f_1,\ldots,f_s,\cF') \le t.$ Then 
    $$\rk_{I(\cF)}(f_1,\ldots,f_s) \le A(t+s)^B.$$
\end{theorem}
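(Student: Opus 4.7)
My plan is to adapt the approach of theorem \ref{cf-thm} in \cite{KLP} to the relative, bi-homogeneous setting and prove theorem \ref{rel-sing-y} by induction on the degree $e$. The base case $e=1$ is a linear-algebra calculation: the $y$-gradient entries of the $f_i$ are $x$-forms of bounded degree, and a rank drop in codimension $\le t$ combined with the $(A,B,t)$-regularity of $\cF$ forces a scalar linear dependence modulo $I(\cF)$, yielding the required small relative rank.

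For the inductive step I argue by contraposition. Assuming $\rk_{I(\cF)}(f_1,\ldots,f_s)>A(t+s)^B$, the goal is to show $\codim_{X(\cF)}(X(\cF)\cap S_y(f_1,\ldots,f_s,\cF'))>t$. A bi-homogeneous version of corollary \ref{reg-seq} makes $X(\cF)$ a complete intersection satisfying $R_t$. Pick an irreducible component $Y$ of $X(\cF)\cap S_y(f_1,\ldots,f_s,\cF')$ and, using the generic rank-drop over $Y$ together with a scalar-extraction argument, produce scalars $a_1,\ldots,a_s\in\k$ (not all zero) and a bi-homogeneous $g\in I(\cF')$ of matching bi-degree so that every $y$-derivative $\partial(f-g)/\partial y_j$ of $f:=\sum a_i f_i$ vanishes on $Y$. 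Lemma \ref{deriv-ideal} applied at a smooth point $x_0\in Y$ then places each $\partial(f-g)/\partial y_j$ in the ideal generated by the Taylor coefficients at $x_0$ of the defining equations of $Y$, all of degree $<e$; by Euler's formula $d_y\cdot(f-g)=\sum y_j\partial(f-g)/\partial y_j$ (valid when $d_y$ is invertible), $f\pmod{I(\cF')}$ itself belongs to this ideal. The Taylor coefficients can be organized into a bi-homogeneous tower of degrees $<e$ to which the inductive hypothesis applies, yielding a contradictory bound $\rk_{I(\cF)}(f)\le C(t+s)^D$ once $A,B$ are chosen suitably large.

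The two principal obstacles are: first, extracting a genuinely \emph{scalar} null vector $(a_1,\ldots,a_s)\in\k^s$ from the function-field null space of the $y$-gradient matrix over $Y$, while preserving bi-homogeneity of $f-g$ — this is the most delicate step and will likely require a Bertini-type specialization or a pigeonhole argument on generic $x$-slices, combined with a careful bookkeeping of bi-degrees to ensure the element $g\in I(\cF')$ can be chosen of the correct bi-degree. Second, verifying that the Taylor-coefficient tower built at $x_0$ retains $(A',B',t')$-regularity with \emph{polynomial-only} degradation of the parameters is essential for the induction to close with polynomial bounds rather than tower-type ones, and will rely on combining the regularity of $\cF$ with a linear change of variables centered at $x_0$ that aligns the differentials of the defining equations with a coordinate subspace. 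The edge case $d_y=p$, where Euler's formula fails, is handled separately in the spirit of corollary \ref{Euler-p}, using the unique factorization of $\poly/J$ for $J$ defined by the regular sequence $\cF$ augmented by the auxiliary equations.
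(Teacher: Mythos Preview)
Your proposal follows the same thread as the paper's argument (induction on $e$, Lemma \ref{deriv-ideal}, Taylor expansion, separate treatment of the $p\mid d_y$ case), but there is a genuine gap at the step where you ``organize the Taylor coefficients into a tower \ldots\ yielding a bound $\rk_{I(\cF)}(f)\le C(t+s)^D$.''

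The issue is what the Taylor ideal actually contains. Lemma \ref{deriv-ideal} places $\partial(f-g)/\partial y_j$ in the ideal generated by the Taylor pieces $(\cF)_{x_0}^k$ for $1\le k<d_i$ together with the Taylor pieces of the $t$ extra equations cutting out $Y$. The latter contribute $O(et)$ forms, which is fine; but the former contribute $\sum_i m_i(d_i-1)$ forms of positive degree $<d_i$ that are \emph{not} in $I(\cF)$. The rank bound you obtain is therefore $\rk_{I(\cF)}(f)\le O(|\cF|)+O(et)$, and $|\cF|=\sum m_i$ is not controlled by $t,s$ and the degree sequence. Proving that the Taylor tower is regular does not help: regularity says the Taylor pieces are algebraically independent, not that they lie in $I(\cF)$, so you cannot absorb them into the modulus.

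The paper removes these unwanted Taylor terms by a different mechanism. It passes to a single auxiliary derivative $\partial_z f$ and the tangent tower $T_z\cF$; gets $\partial_z f\in I((T_z\cF)_{w_0}^j)+I(h_1,\ldots,h_r)$ for \emph{generic} $w_0$ (Proposition \ref{low-rank-der}); shows (Proposition \ref{Taylor-reg}, and this is where the inductive hypothesis is actually spent) that for $e{+}1$ generic basepoints the combined Taylor tower is a regular sequence; and then (Proposition \ref{gluing} together with the lemma $\bigcap_j I_j=\prod_j I_j$ for such sequences) concludes that $\partial_z f$, having degree $e$, lies in $I(T_z\cF,\cG)$ with $|\cG|\le Ft^G$, since the product of the $e{+}1$ residual ideals contains nothing of degree $\le e$. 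Only then does Lemma \ref{Euler-rel} deliver $\rk_{I(\cF)}(f)\le A t^B$ independent of $|\cF|$. Your plan is missing this multiple-basepoint elimination step entirely.

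Two smaller remarks. Your ``most delicate step'' of extracting a scalar null vector is unnecessary: since $S_y(f_1,\ldots,f_s,\cF')=\bigcup_{a\in\P^{s-1}}S_y(\sum_i a_if_i,\cF')$, some component of the union already has codimension $\le t+s-1$ in $X(\cF)$, and one may work with that single $f=\sum a_if_i$ from the outset. And your proposed route to Taylor-tower regularity (``a linear change of variables centered at $x_0$'') cannot succeed at an arbitrary smooth point; the paper needs genericity of the basepoints and establishes it via Lemma \ref{low-rk-sing} combined with a fiber-dimension estimate (Lemma \ref{Markov}).
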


 The reason we will need this generalization is for proposition \ref{Taylor-reg}. Note that for $e\le p,$ theorem \ref{rel-sing} is simply the case when there are no $x$ variables.

\section{Regularization and proof of theorem \ref{main}}

We now assume theorem \ref{rel-sing} holds and use it to deduce theorem $\ref{main}$ for algebraically closed fields. This will follow from an efficient relative regularization process, which is the content of the next lemma. This process will also play a key role later in our proof of theorem \ref{rel-sing}.

\begin{lemma}[Relative regularization]\label{regularize}
        Let $S = \bigoplus_{j\ge 0} S_j$ be a graded ring with $S_0 =\k$ a field. For any $A,B$ there exist constants $C(A,B,d),D(A,B,d)$ such that the following holds: If $(g_1,\ldots,g_s)$ is a collection of homogeneous elements of positive degree $\le d$  then there exists a tower $\mathcal{G}$ such that:
		\begin{enumerate}
            \item $I(g_1,\ldots,g_s)\subset I(\mathcal{G}).$
            \item $\mathcal{G}$ is of degree $(1,\ldots,d).$
		\item $\mathcal{G}$ is $(A,B,t)$-regular.
		\item $|\cG|\le C(s+t)^D.$
		\end{enumerate}
	\end{lemma}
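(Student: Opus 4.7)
The plan is to run a greedy iterative regularization. First I initialize $\cG$ as a tower of degrees $(1,\ldots,d)$ by placing each $g_j$ into the layer $\cG_{\deg g_j}$, so $|\cG|=s$ and condition (1) holds. Then I repeat the following \emph{replacement step} until every layer is regular: pick any layer $i$ with $\rk_{I(\cG_{<i})}(\cG_i) \le A(M_i+t)^B$, where $M_i := m_i+\cdots+m_d$; take a combination $\sum_j a_j f_{i,j}$ attaining this rank and write it as $\sum_{k=1}^r g_k h_k + \ell$ with $r \le A(M_i+t)^B$, $\ell \in I(\cG_{<i})$, and $\deg g_k,\deg h_k < i$; pick $j_0$ with $a_{j_0} \ne 0$, delete $f_{i,j_0}$ from $\cG_i$, and insert each of the $2r$ factors $g_k,h_k$ into the layer matching its degree (all strictly below $i$). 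Condition (1) is preserved throughout because the deleted $f_{i,j_0}$ lies in the ideal generated by the remaining elements of $\cG_i$, the inserted factors, and $\cG_{<i}$.

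The crucial structural observation is that a replacement step on layer $i$ removes from $\cG_i$ and inserts only into layers strictly below $i$; in particular $m_d$ is monotonically non-increasing, so the total number $T_d$ of ops ever performed on layer $d$ is at most $m_d^{\text{init}} \le s$. (Regularity of $\cG_d$ may be broken and repaired several times by downstream enlargement of $I(\cG_{<d})$; all such ops are counted together in $T_d$.) Working top-down, with $P_j := 2A(M_j^{\max}+t)^B$ bounding the number of factors produced per op on layer $j$, one has
\[
T_i \le m_i^{\text{init}} + \sum_{j>i} T_j P_j, \qquad M_j^{\max} \le s + \sum_{\ell>j} T_\ell P_\ell,
\]
the first inequality because $m_i$ stays $\ge 0$ and each op on layer $j > i$ deposits $\le P_j$ elements into layer $i$, the second because only ops on layers $\ell > j$ can contribute to $M_j$ and each such op contributes at most $P_\ell$ elements in total to the block $\{j,j+1,\ldots,\ell-1\}$.

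Starting from $T_d \le s$ and $P_d \le 2A(s+t)^B$ and unwinding, a direct induction shows $T_j P_j \le C'(s+t)^{(B+1)^{d+1-j}}$ for a constant $C'$ depending on $A,B,d$, so $|\cG| \le s + \sum_j T_j P_j \le C(s+t)^D$ with $D = (B+1)^d$, which gives (4). Conditions (2) and (3) are automatic from the construction — the layer structure is preserved and the process terminates exactly when every layer is regular — and termination itself is forced by the strictly lex-decreasing tuple $(m_d,\ldots,m_1)$. I expect the main technical obstacle to be the coupled top-down recursion above; the one conceptual subtlety is that an op on a layer $k < i$ enlarges $I(\cG_{<i})$ and can therefore undo the regularity of $\cG_i$, but the one-way flow of elements ensures that the totals $T_i$ stay globally bounded independently of the order in which the algorithm picks layers.
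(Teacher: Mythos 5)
Your proof is correct and takes essentially the same route as the paper: a greedy replacement of a low-relative-rank combination by its factors in strictly lower layers, termination by lexicographic descent on $(m_d,\ldots,m_1)$, and a $d$-step recursion bounding the polynomial blow-up. The only difference is bookkeeping — the paper fixes a priori thresholds $n_i\ge m_i+\cdots+m_d$ by a top-down recursion and regularizes against those, whereas you use the live thresholds $A(M_i+t)^B$ and bound the op-counts $T_i$ and maxima $M_i^{\max}$ a posteriori (and you insert both factors $g_k,h_k$ where one suffices); both yield the same polynomial bound.
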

\begin{proof}
    We begin by separating the $g_i$ into $d$ layers $\cG_i = (g_{i,j})_{j\in[m_i]}$ such that the $i$-th layer has degree $i.$ To each layer $i\in [d]$ we assign a number $n_i$ by a formula we'll soon describe. Now we regularize: If for each $i_0\in[d]$ we have $\rk_{I(\cG_{<i_0})} (\cG_{i_0}) > A(n_{i_0}+t)^B$ then we're done. Else, there exists some $i_0$ and $a_1,\ldots,a_{m_{i_0}}\in\k$ not all zero with 
    \[
    \sum_{j=1}^{m_{i_0}} a_j g_{i_0,j} = \sum_{k=1}^r p_kq_k \mod I(\mathcal{G}_{<i_0})
    \]
    with $r\le A(n_{i_0}+t)^B.$ Assume without loss of generality that $a_1\neq 0$ and replace $\mathcal{G}$ by the tower $\mathcal{G}'$ obtained by deleting $g_{i_0,1}$ and by adding each $p_k$ to the layer of degree $\deg(p_k)<i_0.$ Since $\cG_1$ is either linearly dependent or of infinite rank, this process must eventually terminate, yielding $\mathcal{G}$ satisfying requirements 1,2. Now we give the formula for $n_i$ in terms of $m_i,$ the original number of forms in each degree,
    \[
    n_d = m_d\ ,\ n_i = m_i+n_{i+1}A(n_{i+1}+t)^B.
    \]
    Note that $n_i$ bounds the total number of forms appearing in the layers $i,i+1,\ldots,d$ throughout the regularization process. This can be seen by downward induction, where the base case $i=d$ is clear. Assuming it holds for $i+1,$ we get that each of the $n_{i+1}$ forms appearing in degree $i+1$ and above can contribute at most $A(n_{i+1}+t)^B$ forms of degree $i,$ proving the claim for degree $i.$ This proves that requirement $3$ is satisfied. It's easy to see that all of the $n_i$ are polynomial in $s,t$ and so $\dim(\mathcal{G})\le n_1$ satisfies the fourth requirement.
\end{proof}

\begin{proof}[Proof of theorem \ref{main} for algebraically closed fields]
Let $A(1,\ldots,d),B(1,\ldots,d)$ be the constants of theorem \ref{rel-sing}. By the lemma, there exist $C(d),D(d)$ and $\cG$ of size $|\cG|\le C(s+t)^D$ which is $(A,B,t)$-regular with $g_1,\ldots,g_s\in I(\cG).$ By corollary \ref{reg-seq}, $\cG$ is an $R_t$-sequence as desired.
\end{proof}

In the next several sections, we will prove theorem \ref{rel-sing-y}.

\section{Low rank presentation on a Taylor tower}

From here until further notice assume $\k$ is algebraically closed. We now assume theorem \ref{rel-sing-y} (and therefore also theorem \ref{rel-sing}) holds for lower degrees and work towards proving it for the degree sequence $(d_1,\ldots,d_h,e).$ Since $$S_y(f_1,\ldots,f_s,\cF') = \bigcup_{a\in\P^s} S_y(\sum_i a_if_i,\cF')$$ it's enough to prove it for the case of a single polynomial $f.$ For a polynomial $g(x,y)$ we will use the notation $\partial_z g(x,y) = \nabla_{(0,z)} g(x,y)$ thinking of this as a polynomial in $(x,y,z).$ For a bi-homogeneous tower $\cG(x,y)$ we write $\partial_z \cG$ for a new tower in the variables $(x,y,z)$ with the same degrees in which $g\in \cG$ is replaced by $\partial_z g.$ Recall that given a bi-homogeneous tower $\cF(x,y),$ $\cF'\subset \cF$ is the subtower of functions of positive degree in $y.$ We write $T_z\cF$ for the tower $T_z\cF(x,y,z) = \cF\cup\partial_z \cF'.$ We begin with a useful relative analogue of lemma \ref{Euler-p}.

\begin{lemma}\label{Euler-rel}
    There exist constants $A,B$ depending on the degrees such that if $f,\cF$ are bi-homogeneous, $\cF$ is $(A,B,t)$-regular and  $\rk_{I(T_z\cF)} (\partial_z f) \le t,$ then we have $\rk_{I(\cF)} (f) \le At^B.$
\end{lemma}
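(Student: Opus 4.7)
The plan is to apply Euler's formula via the substitution $z = y$ in the given rank presentation of $\partial_z f$. Write $\partial_z f(x,y,z) = \sum_{k=1}^{t} u_k v_k + h(x,y,z)$ with $u_k, v_k$ homogeneous of strictly lower degree and $h \in I(T_z \cF)$. The substitution turns the left-hand side into $\sum_j y_j \partial_{y_j} f = (\deg_y f)\, f$ by Euler, while every generator of $I(T_z \cF)$ collapses into $I(\cF)$: for $g \in \cF'$ one has $\partial_z g(x,y)|_{z=y} = (\deg_y g)\, g \in I(\cF)$, regardless of whether $\deg_y g$ vanishes modulo $p$. This yields
\[ (\deg_y f)\, f \equiv \sum_k u_k(x,y,y)\, v_k(x,y,y) \pmod{I(\cF)}. \]

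If $p \nmid \deg_y f$, dividing gives $\rk_{I(\cF)}(f) \le t$ immediately. The only obstruction is $p \mid \deg_y f$, which, given $0 < \deg_y f \le e \le p$, forces $\deg_y f = e = p$ and $f = f(y)$ depending only on $y$. In this degenerate case Euler is uninformative, and I would instead adapt the UFD/smoothness argument from the proof of Corollary \ref{Euler-p} to the relative setting. Apply Lemma \ref{regularize} to build a tower $\cG \supseteq \cF$ of size $|\cG| \le C\, t^{D}$ which absorbs (suitable specializations of) $u_k, v_k$ together with the generators of $\partial_z \cF'$, and which is $(A',B',t')$-regular. By the inductive hypothesis---Theorem \ref{rel-sing-y} at strictly smaller degree sequences---combined with Corollary \ref{reg-seq}, $\cG$ is an $R_{t'}$-sequence, and therefore $\poly/I(\cG)$ is a unique factorization domain by Grothendieck's theorem.

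Suppose for contradiction that the image of $f$ is irreducible in $\poly/I(\cG)$; then it is prime, and $V(I(\cG)+(f))$ is a reduced subvariety of the expected codimension $|\cG|+1$. Specializing the rank presentation at $z = e_j$ expresses each $\partial_{y_j} f$ as an element of $I(\cG)$, so the Jacobian of $f$ vanishes identically along $V(I(\cG)+(f))$. The tangent space at any smooth point is then cut out only by the differentials of the elements of $\cG$, forcing codimension at most $|\cG|$---a contradiction. Hence $f$ factors modulo $I(\cG)$, and gathering the extra generators gives $\rk_{I(\cF)}(f) \le 1 + |\cG \setminus \cF| \le A\, t^{B}$ for suitable $A, B$ depending on the degrees.

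I expect the main obstacle to be this hard case: orchestrating the bi-graded relative UFD/smoothness argument so that $\cG$ is simultaneously (i) small enough, polynomially in $t$ and independent of the ambient dimension; (ii) rich enough that every $\partial_{y_j} f$ actually lands in $I(\cG)$, not merely in an extension of it to $\k[x,y,z]$; and (iii) regular enough---via the inductive form of Theorem \ref{rel-sing-y}---to invoke Grothendieck's UFD theorem and the codimension count. Balancing these three requirements, together with handling the passage between $\k[x,y,z]$ (where $\partial_z f$ lives) and $\k[x,y]$ (where the conclusion is stated), is the delicate step.
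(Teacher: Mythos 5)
Your easy case ($p\nmid \deg_y f$, substitute $z=y$ and apply Euler in the $y$-grading) is exactly the paper's argument, and your hard case follows the paper's overall strategy (regularize the extra generators, invoke the UFD property of a quotient by an $R_3$-sequence, derive a contradiction from the codimension of the tangent space). However, there is a genuine gap in how you execute the hard case.

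The problem is your claim that ``specializing the rank presentation at $z=e_j$ expresses each $\partial_{y_j}f$ as an element of $I(\cG)$.'' Specializing $z=e_j$ in $\partial_z f\in I(T_z\cF)+ (u_1v_1,\ldots,u_tv_t)$ only gives $\partial_{y_j}f\in I(\cF,\partial_{e_j}\cF', \ldots)$: the generators $\partial_z g$ for $g\in\cF'$ specialize to the honest partial derivatives $\partial_{e_j}g$, which are \emph{not} in $I(\cF,\cG)$ and cannot be absorbed into a tower of size polynomial in $t$ --- across all $j$ there are $n\cdot|\cF'|$ of them, so any attempt to ``absorb the generators of $\partial_z\cF'$'' violates your own requirement (i) of dimension-independence. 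Consequently the assertion that the Jacobian of $f$ vanishes identically along $V(I(\cG)+(f))$ is unjustified, and in fact it is the wrong goal. The paper's fix is to keep $z$ as a free variable: one only concludes that $\partial_z f(y)=0$ whenever $y\in X(\cF,\cG)$ and $\partial_z\cF(y)=0$, i.e.\ the differential of $f$ at $y$ annihilates the tangent space $T_yX(\cF)$ (not all of $\A^n$). That weaker statement already forces $T_yX(f,\cF,\cG)=T_yX(\cF,\cG)$, which has codimension $\le|\cF|+|\cG|$, contradicting generic smoothness of the codimension-$(|\cF|+|\cG|+1)$ variety $X(f,\cF,\cG)$. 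A second, smaller point: you should not try to absorb the $u_k$ at all. Since $\partial_z f$ is tri-homogeneous of degree $(0,p-1,1)$ in $(x,y,z)$, taking tri-homogeneous components of the presentation shows that in each product $u_kv_k$ exactly one factor has $z$-degree $1$ and both have $x$-degree $0$; discarding the $z$-linear factor gives $\partial_z f\in I(T_z\cF, g_1(y),\ldots,g_t(y))$ with $g_k=g_k(y)$ forms in $y$ alone. These $t$ forms (regularized modulo $I(\cF)$ via Lemma \ref{regularize}) are the only extra generators needed, and this double-grading step is what makes the size bound and the tangent-space argument go through.
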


\begin{proof}
    If $\ch(\k)$ doesn't divide the $y$-degree of $f$ then we can simply plug in $z=y$ and get $\rk_{I(\cF)} (f) \le t.$ The remaining case is when $\ch(\k)=p$ and $f=f(y)$ is of degree $p.$ By considering the double grading with respect to the variables $x,z,$ our assumption implies that $\partial_z f\in I(T_z\cF,g_1(y),\ldots,g_t(y)),$ and we may also assume without loss of generality that $\cF = \cF(y).$ Let $\bar g_1,\ldots,\bar g_t$ be the images of the $g_i$ in the ring $S = \k[y]/I(\cF).$ We apply lemma \ref{regularize} to $\bar g_i$ and obtain a tower $\bar \cG$ in $S$ which is $(A,B,3)$-regular. Let $\cG$ be any tower on $\k[y]$ of the same degrees as $\bar\cG$ with $\cG = \bar\cG \mod I(\cF).$ Note that if $\cF$ is $(A,B,3+|\cG|)$-regular, then by definition the tower given by stacking $\cG$ on top, $(\cF,\cG),$ is $(A,B,3)$-regular. This condition is satisfied so long as $\cF$ is $(C,D,t)$-regular for sufficiently large $C,D.$ The tower $(\cF,\cG)$ is an $R_3$-sequence by corollary \ref{reg-seq} and so $R = \k[y]/I(\cF,\cG)$ is a unique factorization domain by \cite{R3}. Suppose, to get a contradiction, that $f$ is irreducible in $R$ and hence a prime. But by construction $\nabla_z f(y)$ vanishes for any $(y,z)$ satisfying $y\in X(\cG), \nabla_z\cF(y) = 0$ but this contradicts the dimension of the tangent space to $X(f,\cF,\cG)$ being one smaller than that of $X(\cF,\cG).$
\end{proof}

Given a vector space $V$ over $\k,$ a point $v_0\in V$ and a polynomial $g\in \k[V]$ recall the Taylor expansion
\[
g(v_0+v) = \sum_{i\ge 0} g_{v_0}^i(v) 
\]
with $g_{v_0}^i(v)$ homogeneous of degree $i.$ For a tower $\cG$ we write $\cG_{v_0}^i$ for the collection of polynomials $g_{v_0}^i$ for $g\in\cG.$ We will later be more precise and organize the collection $(\cG_{v_0}^i)_{i\ge 0}$ into a tower. The following proposition is the first step towards the proof of theorem \ref{rel-sing-y} for the degree sequence $(d_1,\ldots,d_h,e).$ 

\begin{proposition}\label{low-rank-der}
    Suppose $\cF, f$ are as in theorem \ref{rel-sing-y} and set $r=(e-1)t.$ Then there exists a subvariety $Y\subset X(T_z\cF)$ of codimension $t$ such that for generic points $w_0\in Y$ we have homogeneous polynomials $h_1(x,y),\ldots,h_r(x,y)$ of positive degrees $<e$ with 
    \[
    \partial_z f\in I((T_z\cF)_{w_0}^j)_{j\ge 0}+I(h_1,\ldots,h_r).
    \]
\end{proposition}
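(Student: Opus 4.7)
My plan is to locate a codimension-$t$ subvariety $Y \subset X(T_z\cF)$ on which $\partial_z f$ vanishes identically, and then apply Lemma \ref{deriv-ideal} at a generic smooth point $w_0 \in Y$ to expand $\partial_z f$ in terms of Taylor coefficients of a local system of parameters for $Y$. The Taylor coefficients of $T_z\cF$ furnish the first summand in the containment; the extra $t$ parameters will be polynomials $\psi_1,\ldots,\psi_t\in\k[x,y]$ whose lower-degree Taylor coefficients play the role of the $h_i$.

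\emph{Construction.} Let $W := X(\cF)\cap S_y(f,\cF')$, of codimension $\le t$ in $X(\cF)$ by hypothesis. I pick an irreducible component $W_0\subset W$ of maximal dimension with $W_0\not\subset S_y(\cF')$ (the key point; see below), and, if necessary, intersect with generic linear forms so that $\codim_{X(\cF)} W_0 = t$. At a generic smooth point $(x_0,y_0)\in W_0$ the variety $W_0$ is cut out by $\cF$ together with $t$ polynomials $\psi_1,\ldots,\psi_t\in\k[x,y]$ (e.g.\ appropriate maximal minors of the $y$-Jacobian of $(f,\cF')$, supplemented by generic linear forms if needed) with linearly independent differentials. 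Set $Y := \pi^{-1}(W_0)\cap X(T_z\cF)$, where $\pi$ projects out the $z$-variables. Since $(x_0,y_0)\notin S_y(\cF')$, the gradients $\nabla_y g(x_0,y_0)$ for $g\in\cF'$ are linearly independent, and a direct check shows that $\cF\cup\partial_z\cF'\cup\{\psi_1,\ldots,\psi_t\}$ has linearly independent differentials at a generic $w_0=(x_0,y_0,z_0)\in Y$; thus $Y$ is (locally) irreducible of codimension $t$ in $X(T_z\cF)$ and this list is a local regular sequence for $I(Y)$. For the vanishing $\partial_z f\in I(Y)$: on the dense open $W_0\setminus S_y(\cF')$ the rank drop defining $S_y(f,\cF')$ forces $\nabla_y f\in\mathrm{span}\{\nabla_y g:g\in\cF'\}$, so for $(x,y,z)\in Y$ with $(x,y)$ in this open set the equations $\partial_z g=0$ say $z\perp\nabla_y g$ for all $g\in\cF'$, whence $z\perp\nabla_y f$ and $\partial_z f(x,y,z)=0$.

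\emph{Expansion and counting.} Lemma \ref{deriv-ideal} applied to $\partial_z f\in I(Y)$ at $w_0$ yields
\[
\partial_z f \in I\bigl((T_z\cF)_{w_0}^j,\ (\psi_i)_{w_0}^j : 1 \le j \le e\bigr).
\]
Extracting the bi-degree $(e-1,1)$ component with respect to $((x,y),z)$ and noting that each $(\psi_i)_{w_0}^j$ depends only on $(x,y)$, any appearance of $(\psi_i)_{w_0}^j$ in this expansion must be multiplied by a form of bi-degree $(e-1-j,1)$, forcing $j\le e-1$. Collecting the $(\psi_i)_{w_0}^j$ for $1\le i\le t,\ 1\le j\le e-1$ as $h_1,\ldots,h_r$ gives at most $(e-1)t$ polynomials of positive degree $<e$ in $(x,y)$, as required.

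\emph{Main obstacle.} The crux is guaranteeing that some maximal-dimension component $W_0$ of $W$ is not contained in $S_y(\cF')$. To rule out the alternative $W\subset S_y(\cF')$, I expect to invoke the $(A,B,t)$-regularity of $\cF$: inductively, Corollary \ref{reg-seq} for smaller degree sequences should imply that $S_y(\cF')\cap X(\cF)$ has codimension strictly greater than $t$ in $X(\cF)$, while $W$ has codimension $\le t$, giving $W\not\subset S_y(\cF')$. The remaining technicalities (generic smoothness of $w_0$, reducedness of $W_0$, and the cut with generic linear forms to reach codimension exactly $t$) should follow from standard generic-smoothness and Bertini-type arguments.
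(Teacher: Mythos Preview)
Your approach is essentially the paper's: pick an irreducible component $Z$ (your $W_0$) of $X(\cF)\cap S_y(f,\cF')$ of codimension $t$, lift to $Y\subset X(T_z\cF)$ via $\pi^{-1}$, use the inductive hypothesis to ensure $Z\not\subset S_y(\cF')$ so that $\partial_z f$ vanishes on $Y$, and apply Lemma~\ref{deriv-ideal} together with the bi-degree bookkeeping to cap the $\psi_i$-Taylor terms at degree $e-1$. Two small corrections: the paper selects an irreducible component of $\pi^{-1}(Z)$ dominating $Z$ rather than the full preimage (needed for Lemma~\ref{deriv-ideal}), and the obstacle $W_0\not\subset S_y(\cF')$ is handled by Theorem~\ref{rel-sing-y} applied inductively to $\cF$, not Corollary~\ref{reg-seq}, which bounds the codimension of the full singular locus $S(\cF)$ rather than the partial-$y$ locus $S_y(\cF')$.
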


\subsection*{Setup for proof of proposition \ref{low-rank-der}}
Let $\pi:X(T_z\cF)\to X(\cF)$ be the projection $\pi(x,y,z)=(x,y).$ Note that $\pi$ is surjective since $\pi(x,y,0)=(x,y).$ Let $Z\subset X(\cF)\cap S_y(f,\cF')$ be an irreducible component with $\codim_{X(\cF)} Z =t$ and choose an irreducible component $Y\subset \pi^{-1}(Z)$ with $\overline{\pi(Y)}=Z.$ We will show that the proposition holds for any such $Y.$ Before proving this, we need a couple of preliminary lemmas.

\begin{lemma}\label{tangent-rk}
    There exist constants $C,D$ such that if $\cF$ is $(C,D,t)$-regular then $T_z\cF$ is $(A,B,t)$-regular.
\end{lemma}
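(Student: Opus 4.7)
The plan is to argue by contrapositive: if $T_z\cF$ fails $(A,B,t)$-regularity at some layer $i$, then $\cF$ itself fails $(C,D,t)$-regularity, provided $C,D$ are chosen large in terms of $A,B$ and of the constants $A_0,B_0$ supplied by Lemma \ref{Euler-rel}. The layer structure on $T_z\cF$ is inherited from $\cF$: layer $i$ is $\cF_i\cup\partial_z\cF'_i$, of size at most $2m_i$, and the layers from $i$ onward have total size at most $2N_i$ where $N_i := m_i+\cdots+m_h$.

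Suppose some layer $i$ admits a nontrivial combination $L = \sum_j a_j f_{i,j} + \sum_k b_k \partial_z f'_{i,k}$ with $L = \sum_{l=1}^r P_l Q_l + h$, $h\in I(T_z\cF_{<i})$, and $r \le A(2N_i+t)^B$. By passing to tri-homogeneous components of this identity in the $(x,y,z)$-grading, I may assume that either $L$ has no $z$ (all $b_k=0$) or $L = \partial_z g$ for a bi-homogeneous $g = \sum_k b_k f'_{i,k}$. The first step is to substitute $z = 0$: since $\partial_z f'|_{z=0}=0$ and every other generator of $I(T_z\cF_{<i})$ lies in $I(\cF_{<i})$, we obtain $\sum_j a_j f_{i,j} = \sum_l P_l|_{z=0} Q_l|_{z=0} + h|_{z=0}$ with $h|_{z=0}\in I(\cF_{<i})$. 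If some $a_j\ne 0$, this is a nontrivial rank-$r$ representation of an element of $\cF_i$ modulo $I(\cF_{<i})$, contradicting $(C,D,t)$-regularity of $\cF$ as long as $C(N_i+t)^D > A(2N_i+t)^B$.

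Otherwise all $a_j=0$ and $L = \partial_z g$ with $g$ a nonzero element of $\cF'_i\subseteq\cF_i$. If $\ch(\k) \nmid d_i$, the substitution $z=y$ and Euler's formula yield $L|_{z=y} = d_i g$; and since each generator $\partial_z f'\in\partial_z\cF'_{<i}$ becomes a scalar multiple of $f'$ under $z=y$, we have $I(T_z\cF_{<i})|_{z=y}\subseteq I(\cF_{<i})$, giving $\rk_{I(\cF_{<i})}(g) \le r$ and the same contradiction. The main obstacle is the case $d_i = p$, where the Euler trick produces nothing, since $L|_{z=y} = 0$ and $g$ cannot be recovered from $\partial_z g$ by any substitution. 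Here I would invoke Lemma \ref{Euler-rel}, applied with subtower $\cF_{<i}$ and form $g$: its hypothesis $\rk_{I(T_z\cF_{<i})}(\partial_z g) \le r$ is exactly what we have, and its conclusion $\rk_{I(\cF_{<i})}(g) \le A_0 r^{B_0}$ again contradicts $(C,D,t)$-regularity of $\cF$ once $C(N_i+t)^D$ exceeds $A_0 A^{B_0} 2^{BB_0}(N_i+t)^{BB_0}$.

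The remaining bookkeeping is routine but must be done with care: $(C,D,t)$-regularity of $\cF$ must entail $(A_0,B_0,r)$-regularity of the subtower $\cF_{<i}$ demanded by Lemma \ref{Euler-rel}, with $r$ itself polynomial in $N_i+t$. A single iteration of the polynomial bound shows that taking $D = BB_0+1$ and $C$ of size $O(A_0 A^{B_0} 2^{BB_0})$ suffices, and these constants depend only on the degrees $\mathbf{d}$.
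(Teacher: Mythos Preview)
Your argument is correct and follows essentially the same route as the paper: substitute $z=0$ to kill the $\partial_z$-part and force the $a_j$ to vanish, then apply Lemma \ref{Euler-rel} to the remaining $\partial_z g$ term to bound $\rk_{I(\cF_{<i})}(g)$. The paper invokes Lemma \ref{Euler-rel} uniformly rather than separating out the Euler-formula case $\ch(\k)\nmid d_i$, and it does not make the tri-homogeneous decomposition explicit, but these are cosmetic differences; your bookkeeping for the regularity hypothesis needed by Lemma \ref{Euler-rel} is also handled correctly.
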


\begin{proof}
    Let $r_i = A(2m_i+\ldots+2m_h+t)^B$ be the desired relative rank of the $i$-th layer. Suppose that for some $i\in[h]$ we have a linear combination of elements of $(T_z\cF)_i$ which has relative rank $\le r_i$ on $(T_z\cF)_{<i},$
    \[
    \sum_j a_j f_{i,j}(x,y) + \sum_j b_j \partial_z f_{i,j}(x,y).
    \]
    By plugging in $z=0$ we get 
    \[
    \rk_{I(\cF_{<i})} \left(\sum_j a_j f_{i,j}(x,y)\right) \le r_i, 
    \]
    which implies $a_j = 0 $ for all $j.$ Now we're left with
    \[
    \rk_{I((T_z\cF)_{<i})} \left( \partial_z \left( \sum_j b_j f_{i,j}\right) \right) \le r_i,
    \]
    which by lemma \ref{Euler-rel} implies $\rk_{\cF_{<i}}(\sum_j b_j f_{i,j}) \le C(m_i+\ldots+m_h+t)^D, $ for sufficiently large $C,D.$ Therefore $b_j=0$ for all $j$ and we're done.
\end{proof}

\begin{lemma}\label{gen-dif}
    There exist $g_1(x,y),\ldots,g_t(x,y)\in I(Z)$ such that for generic points in $Y$ the differentials of $T_z\cF, g_1,\ldots,g_t$ are linearly independent. 
\end{lemma}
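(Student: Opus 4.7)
The plan is to combine the $R_t$-regularity of $T_z\cF$ inherited from $\cF$ via Lemma~\ref{tangent-rk} with a dimension count for the projection $\pi:X(T_z\cF)\to X(\cF)$, and then lift a regular system of parameters for the ideal of $Z$ inside $X(\cF)$ to obtain the $g_i$. By Lemma~\ref{tangent-rk} and Corollary~\ref{reg-seq} (applied after enlarging the regularity constants), $T_z\cF$ is an $R_t$-sequence, so $X(T_z\cF)$ is a complete intersection of pure codimension $|\cF|+|\cF'|$, regular in codimension $t$. The fibre of $\pi$ over $(x_0,y_0)\in X(\cF)$ is the linear subspace $\{z:\nabla_y g(x_0,y_0)\cdot z=0\ \forall g\in\cF'\}$, of dimension $n-\rho(x_0,y_0)$ where $\rho$ is the rank of the $y$-Jacobian of $\cF'$; comparing dimensions of $X(T_z\cF)$ and $X(\cF)$ forces $\rho=|\cF'|$ generically on $X(\cF)$. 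Writing $\rho_Z$ for the generic rank on $Z$, we get $\dim Y=\dim Z+(n-\rho_Z)$ and hence $\codim_{X(T_z\cF)} Y=t-(|\cF'|-\rho_Z)\le t$, so a generic point of $Y$ is smooth in $X(T_z\cF)$ and the differentials of $T_z\cF$ are linearly independent there.

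At a generic smooth point $p=(x_0,y_0)\in Z\subset X(\cF)$, the local ring $\mathcal{O}_{X(\cF),p}$ is regular (by $R_t$-regularity of $\cF$) and $I(Z)\mathcal{O}_{X(\cF),p}$ has height $t$, so lifting a regular system of parameters yields $g_1,\ldots,g_t\in I(Z)$ whose differentials are linearly independent modulo those of $\cF$ at $p$. At a generic $(x_0,y_0,z_0)\in Y$, any putative linear dependence $\sum_{g\in\cF} a_g\,dg+\sum_{g'\in\cF'}b_{g'}\,d(\partial_z g')+\sum_i c_i\,dg_i=0$ has $z$-component equal to $\sum_{g'} b_{g'}\nabla_y g'(x_0,y_0)=0$; provided the $\nabla_y g'$ are independent at $p$, all $b_{g'}$ vanish, and the remaining $(x,y)$-relation vanishes by the choice of the $g_i$.

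The hard part is showing $\rho_Z=|\cF'|$, equivalently $Z\not\subset S_y(\cF')$. Since every component of $X(\cF)\cap S_y(f,\cF')$ is contained either in $X(\cF)\cap S_y(\cF')$ or in the locus where $\nabla_y f$ lies in the span of the $\nabla_y g'$, it suffices to show $\codim_{X(\cF)}\bigl(X(\cF)\cap S_y(\cF')\bigr)>t$. My plan for this is to invoke the inductive hypothesis of Theorem~\ref{rel-sing-y} applied to the tower $\cF'$ alone (whose degree sequence is a proper subsequence of the one under consideration), at the cost of replacing the $(A,B,t)$-regularity assumption on $\cF$ by a uniformly larger regularity threshold depending only on $\mathbf{d}$ and $e$.
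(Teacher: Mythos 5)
Your proposal is correct and follows essentially the same route as the paper: choose $g_1,\ldots,g_t\in I(Z)$ completing the differentials of $\cF$ to an independent set at a generic smooth point of $Z$ (smoothness supplied by the regularity of $\cF$), invoke the inductive case of Theorem~\ref{rel-sing-y} to see that $Z\not\subset S_y(\cF')$, and use the block-triangular structure of the Jacobian (the $z$-components of $d(\partial_z g')$ being $\nabla_y g'$, while $\cF$ and the $g_i$ have vanishing $z$-derivatives) to get independence generically on $Y$. Your first paragraph's fibre-dimension count is correct but not needed for this lemma; the paper runs the logic in the other direction, deducing the codimension of $Y$ \emph{from} this lemma in the proof of Proposition~\ref{low-rank-der}.
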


\begin{proof}
    By theorem \ref{rel-sing} the differentials of $\cF$ are linearly independent at generic points of $Z.$ If we choose some smooth point $q$ in this set, and choose $g_1,\ldots,g_t\in I(Z)$ such that the differentials of $\cF,g_1,\ldots,g_t$ at $q$ are linearly independent, then they are linearly independent for generic points in $Z.$ Thinking of these now as functions in $I(Y)\subset \k[x,y,z],$ all their partial derivatives with respect to the $z$ variables vanish. Now for $ f\in \cF',$ we have
    \[
    \nabla (\partial_z f (x,y)) = (*,\ldots,*,\frac{\partial f}{\partial y_1}(x,y),\ldots,\frac{\partial f}{\partial y_n}(x,y)).
    \]
    By theorem \ref{rel-sing-y}, these differentials are linearly independent in the $z$ coordinates for generic $(x,y)\in Z,$ so the entire collection $T_z\cF, g_1,\ldots,g_t$ has linearly independent differentials at points in $Y\cap \pi^{-1}(U)$ where $U\subset Z$ is a dense open subset. Since $\pi(Y)$ is dense in $Z,$ this is a dense open subset of $Y.$
\end{proof}

\begin{proof}[Proof of proposition \ref{low-rank-der}]
     We chose $Y$ to be an irreducible component of the intersection  $X(\partial_z\cF')\cap \{(x,y,z):(x,y)\in Z\}.$ By the principal ideal theorem these varieties have codimension $\le |\partial_z\cF'|$ and $\le |\cF|+t,$ so the codimension of $Y$ in affine space is at most $|T_z\cF|+t.$ Lemma \ref{gen-dif} shows that the opposite inequality holds as well so the codimension is exactly $|T_z\cF|+t.$ By lemma \ref{tangent-rk} and corollary \ref{reg-seq}, $T_z\cF$ is a prime sequence so $\codim_{X(T_z\cF)} Y = t.$ Now note that $\partial_z f\in I(Y).$ This is because theorem \ref{rel-sing-y} implies that $U := Z\setminus S_y(\cF')$ is a nonempty open subset and so $Y\cap \pi^{-1}(U)$ is a dense open subset of $Y.$ The function $\partial_z f$ vanishes on this dense open subset, hence it vanishes identically on $Y.$ Combining lemma \ref{gen-dif} with lemma \ref{deriv-ideal} we get that for generic points $w_0\in Y$ we have 
     \[
        \partial_z f(x,y)\in I((T_z\cF)_{w_0}^j)_{j\in[e]}+I((g_i)^j_{w_0})_{i\in[t],j\in [e-1]}.
     \]
     Why does $j$ run up to $e-1$ and not $e$? This is because the $g_i$ are functions of $(x,y)$ only and $\partial_z f(x,y)$ has degree $e-1$ in $(x,y).$ Therefore any contribution from $(g_i)^e_{w_0}$ must cancel out. This completes the proof of the proposition.
\end{proof}

The remaining obstacle is to remove the elements of the Taylor expansion of $T_z\cF.$ To do this, we will use the fact that the above holds for \emph{generic} points $w_0\in Y.$ It turns out that by combining the low rank representations obtained for different basepoints we can get rid of the unwanted terms.

\section{The Taylor tower is generically regular}

The goal of this (somewhat technical) section is to prove that when we start with a sufficiently regular tower $\cG\subset \k[V]$ (in our case $T_z\cF$) and choose a generic tuple of points in $X(\cG),$ then the tower $\cG$ along with the elements of its Taylor expansion at these points is regular. Before we state the proposition making this precise, we will need some notation and a convention for how to organize the tower of polynomials in the Taylor expansion. Let $g\in\k[V]$ be homogeneous of degree $d.$ The polynomial $g(w+v)\in\k[V\oplus V]$ has Taylor expansion
\[
g(w+v) = g^0(w,v)+g^1(w,v)+\ldots+g^d(w,v)
\]
where $g^i$ each has total degree $d$ and is bi-homogeneous of degree $(d-i,i).$ Note that $g^d(w,v) = g(v).$ Given a tower $\cG = (g_{i,j})_{i\in[h],j\in[m_i]}\subset\k[V]$ of homogeneous forms we define $D\cG\subset \k[V\oplus V]$ to be the bi-homogeneous tower given by \linebreak
$(D\cG)_i = (g^k_{i,j}(w,v))_{j\in[m_i],0\le k\le d_i},$ where $d_i = \deg(\cG_i).$ For a positive integer $l,$ we set $D_l\cG \subset\k[V^l\oplus V]$ to be the bi-homogeneous tower with layers
$$(D_l\cG)_i = (g_{i,j}(v))_{j\in[m_i]} \cup (g^k_{i,j}(w_\lambda,v))_{j\in[m_i],0\le k\le d_i-1,\lambda\in[l]}.$$

For forms $\cG = (g_1,\ldots,g_s)$ in $\k[V]$ of degree $d$ and fixed $w \in V^l$ we obtain a tower $\cG(w)$ on $\k[V]$ of degree $(1,2,\ldots,d)$ with top layer $(g_1(v),\ldots,g_s(v))$ and $i$-th layer  $(g_j^i(w_\lambda,v))_{j\in[s],\lambda\in[l]}.$ Doing this for each of the layers of $\cG$ we obtain a tower $D_l\cG(w)$ on $\k[V]$ of degree $(1,2,\ldots,d_1,1,2,\ldots,d_2,\ldots,1,\ldots,d_h)$ with $d_1+\ldots+d_r$-th layer $\cG_r$ and $d_1+\ldots+d_{r-1}+i$-th layer $(\cG_r(w))_i.$ 
Note that the polynomials in this tower are the collection $(\cG^k_{w_\lambda})_{k\ge 0,\lambda\in[l]}.$
We now have sufficient notation to state the central proposition of this section, which is a version for relative rank of \cite[Theorem 1.6]{KLP}.

\begin{proposition}\label{Taylor-reg}
     Let $\textbf{d}=(d_1,\ldots,d_h)$ be a sequence of degrees such that either $\k$ has characteristic $0$ or $d_i\le \ch(\k)$ for all $i.$  Given $A,B$ there exist constants $C(\textbf{d},A,B,l),D(\textbf{d},A,B,l)$ such that if $\cG$ is a $(C,D,t)$-regular tower of degree $\textbf{d}$ then for generic $w\in V(\cG)^l$ outside an algebraic set of codimension $t,$ the tower $D_l\cG(w)$ is $(A,B,t)$-regular.
\end{proposition}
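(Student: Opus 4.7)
The plan is to induct on the height $h$ of $\cG$, with the base case $h = 0$ being vacuous. For the induction step, I would first apply the inductive hypothesis to $\cG_{<h}$ with much larger parameters $C', D'$, which guarantees that $D_l\cG_{<h}(w)$ is $(A,B,t)$-regular for $w$ outside a codimension-$t$ subvariety of $V(\cG_{<h})^l$. Since $V(\cG)^l \hookrightarrow V(\cG_{<h})^l$ is of codimension at most $l m_h$, after absorbing this into the regularity constants one obtains the same statement for generic $w \in V(\cG)^l$. It then suffices to check the relative rank bounds for the $d_h$ new layers $L_1, \ldots, L_{d_h-1}, L_{d_h} = \cG_h$ contributed by the $h$-th block of $D_l\cG(w)$.

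The central step is an obstruction argument carried out layer by layer. Fix $k \in [d_h]$ and let $W_k \subset V(\cG)^l$ be the locus of $w$ for which some nonzero linear combination inside $L_k$ has relative rank at most $r := A(m_h + t)^B$ on the ideal $J_k(w)$ generated by all lower layers of $D_l\cG(w)$. The goal is to show $\codim W_k > t$. Suppose the contrary; then on a dense open subset of a large component of $W_k$ one may parametrize, using a Chevalley-style argument and passing to an algebraic resolution if necessary, coefficients $a_{j,\lambda}(w)$ not all zero and witness forms $p_s(w,v), q_s(w,v)$ of positive $v$-degree so that
\[
\sum_{j,\lambda} a_{j,\lambda}(w)\, g_{h,j}^k(w_\lambda, v) \equiv \sum_{s=1}^r p_s(w,v)\, q_s(w,v) \pmod{J_k(w)}.
\]

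The heart of the argument is then to convert this family of identities into a genuine low-rank presentation of a linear combination of $\cG_h$ modulo $I(\cG_{<h})$, contradicting $(C,D,t)$-regularity of $\cG$. To do so, I would specialize to a generic smooth point $w_0 \in V(\cG)$ (available because, once $C,D$ are large enough, corollary \ref{reg-seq} makes $\cG$ an $R_t$-sequence, hence generically smooth) and expand the identity along a generic tangent direction, extracting coefficients to produce a low-rank identity relating iterated derivatives of $\sum_j a_j g_{h,j}$ at $w_0$ to a regular lower tower related to $\cG_{<h}$. Iterated application of lemma \ref{Euler-rel}, whose hypothesis $d_i \leq \ch(\k)$ or $\ch(\k) = 0$ is exactly our assumption, then upgrades such a derivative identity into a polynomial bound on $\rk_{I(\cG_{<h})}(\sum_j a_j g_{h,j})$, contradicting the regularity of $\cG$.

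The main obstacle will be the bookkeeping of constants: the argument iterates both across the $h$ steps of the outer induction and across the $d_h$ new layers in the inner loop, with each stage incurring a polynomial blow-up from the inductive hypothesis, from the Chevalley parametrization, and from lemma \ref{Euler-rel}. Verifying that the compound degradation still fits within a polynomial bound $C(\mathbf{d}, A, B, l), D(\mathbf{d}, A, B, l)$ independent of $n$ and $t$ is the delicate part. A secondary subtlety is making the "specialize and expand along a tangent direction" step rigorous in positive characteristic; this is where algebraic closure of $\k$ and the full strength of theorem \ref{rel-sing-y} for strictly smaller degree sequences (available by the outer induction that governs the entire section) are essential.
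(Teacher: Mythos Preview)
Your outer induction on $h$ and the reduction via $V(\cG)^l \subset V(\cG_{<h})^l$ match the paper. The divergence is in how you establish the rank lower bound for the top block of $D_l\cG(w)$.

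The paper does not parametrize low-rank witnesses over a bad locus. Instead it passes to the \emph{global} tower $D_l\cG$ on $\k[V^l\oplus V]$, treating $w$ as genuine variables. Lemma~\ref{Dl-reg} (itself resting on Lemma~\ref{ml-reg} and hence on Lemma~\ref{Euler-rel}) shows $D_l\cG$ is regular whenever $\cG$ is sufficiently regular; Theorem~\ref{rel-sing-y} applied to the top layer $(D_l\cG)_h$ on the tower $(D_l\cG)_{<h}$ then bounds from below the codimension of $Z = X\cap S_v(D_l\cG)$ inside $X = X(D_l\cG_{<h})$. Projecting to $W = X(\cG_{<h})^l$ and applying the fiber-dimension estimate of Lemma~\ref{Markov} forces $\dim Z(w) < \dim X(w) - 2r$ for $w$ outside a set of codimension $\ge r$; finally Lemma~\ref{low-rk-sing} (low relative rank forces a large singular locus) converts this into the rank bound for $D_l\cG_h(w)$. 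The singular locus is the bridge that transfers regularity of the global tower to regularity of its specializations.

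Your direct route has a real gap. Granting even a Chevalley-style parametrization of witnesses $p_s(w,v), q_s(w,v)$ over a component of $W_k$ (already delicate, since the scheme of rank-$\le r$ presentations is not a bundle), the step ``specialize and expand along a tangent direction, then iterate Lemma~\ref{Euler-rel}'' does not yield a low-rank identity for a nonzero combination of $\cG_h$ modulo $I(\cG_{<h})$. Lemma~\ref{Euler-rel} climbs one derivative step, from $\partial_z f$ back to $f$, and its hypothesis is that the tangent tower $T_z\cF$ is itself regular. To iterate it from a degree-$k$ Taylor piece $g^k$ up to $g$ you would need, at each intermediate step, regularity of the corresponding Taylor tower---which is exactly what you are trying to prove. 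The paper's geometric detour through $Z$ avoids this circularity: regularity of $D_l\cG$ is established once with $w$ variable (Lemma~\ref{Dl-reg}), and the passage from global to fiberwise is pure dimension-counting, with no further appeal to rank-to-rank lemmas.
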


\textbf{Note:} The restriction on the degree in positive characteristic cannot be relaxed and this is the main obstacle to obtaining Theorem \ref{main} for higher degrees. We include a brief discussion at the end of this section.

To prepare for the proof, we start with a few lemmas. If $g\in\k[V]$ is homogeneous of degree $d,$ then associated to it is a multi-linear form $\tilde g:V^d\to\k$ given by $\tilde g(v_1,\ldots,v_d) = \nabla_{v_1}\ldots\nabla_{v_d} g(0).$ Note that $\tilde g(x,\ldots,x) = d!g(x).$ If $\cG$ is a tower with highest degree $d,$ we get a tower of forms $\tilde \cG$ defined on $V^d.$ For each function $g$ in $\cG_i$ of degree $d_i$ and subset $E\in\binom{[d]}{d_i},$ $\tilde \cG_i$ contains a function $ g_E(v_1,\ldots,v_d) = \tilde g((v_i)_{i\in E}). $

\begin{lemma}\label{ml-reg}
    If the degree of $\cG$ and characteristic of $\k$ are as in proposition \ref{Taylor-reg} and $A,B$ are constants, then there exist constants $C(\textbf{d},A,B),D(\textbf{d},A,B)$ such that if $\cG$ is $(C,D,t)$-regular, then $\tilde\cG$ is $(A,B,t)$-regular.
\end{lemma}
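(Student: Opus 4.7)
My plan is a proof by contradiction: suppose $\tilde\cG$ is not $(A,B,t)$-regular, so for some layer $i$ there exist scalars $(a_{g,E})_{g\in\cG_i,\,E\in\binom{[d]}{d_i}}$, not all zero, with
\[
\rk_{I(\tilde\cG_{<i})}\Bigl(\sum_{g,E}a_{g,E}\,g_E\Bigr) \le r := A(|\tilde\cG_i|+\ldots+|\tilde\cG_h|+t)^B.
\]
From this I will extract a nontrivial low-rank combination of $\cG_i$ modulo $I(\cG_{<i})$, which contradicts the $(C,D,t)$-regularity of $\cG$ once $C,D$ are chosen large enough relative to $A,B,\mathbf{d}$.

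The core device is a substitution isolating a single multi-index $E_0$. I pick $E_0$ with $a_{g_0,E_0}\ne 0$ for some $g_0\in\cG_i$, fix $j_0\in E_0$, and substitute
\[
v_j\mapsto y\ (j\in E_0\setminus\{j_0\}),\qquad v_{j_0}\mapsto z,\qquad v_j\mapsto 0\ (j\notin E_0),
\]
where $y,z$ are fresh copies of $V$. Multilinearity of each $\tilde g$ kills $g_E$ whenever $E\not\subseteq E_0$, and since $|E|=|E_0|=d_i$ only $g_{E_0}$ survives in layer $i$, evaluating via the standard polarization identity to $(d_i-1)!\,\nabla_z g(y)$. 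For $g'\in\cG_{<i}$ the surviving $g'_{E'}$ evaluate to scalar multiples of $g'(y)$ or $\nabla_z g'(y)$, so the image of $I(\tilde\cG_{<i})$ under the substitution sits inside $I(T_z\cG_{<i})$.

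Writing $f(y):=\sum_g a_{g,E_0}\,g(y)$, the substitution transports the hypothesis to $\rk_{I(T_z\cG_{<i})}(\partial_z f)\le r$; the leading coefficient $(d_i-1)!$ is nonzero because $d_i-1<\ch(\k)$. I then invoke the relative Euler lemma (Lemma \ref{Euler-rel}) with $\cF=\cG_{<i}$, which inherits regularity from $\cG$, to obtain $\rk_{I(\cG_{<i})}(f)\le A_0 r^{B_0}$ for constants $A_0,B_0$ depending only on $\mathbf{d},A,B$. Since $f$ is a nonzero linear combination of $\cG_i$, this contradicts the $(C,D,t)$-regularity of $\cG_i$ once $C,D$ are chosen sufficiently large (polynomially in $A,B,\mathbf{d}$ and the constants from Lemma \ref{Euler-rel}).

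The hard part is the case $d_i=\ch(\k)=p$: the naive diagonal specialization $v_j\mapsto y$ for all $j\in E_0$ collapses $\tilde g$ to $d_i!\,g(y)=0$ and yields nothing, forcing the near-diagonal substitution that replaces $g$ by $\nabla_z g$. The bootstrap from $\partial_z f$ back to $f$ is then precisely the content of Lemma \ref{Euler-rel}, and the coefficient $(d_i-1)!$ is nonzero exactly because $d_i-1<\ch(\k)$ --- the numerical meaning of the hypothesis $d_i\le\ch(\k)$. As flagged in the note following the proposition, this degree restriction appears inherent to the method.
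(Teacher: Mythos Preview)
Your argument is correct and follows essentially the same route as the paper's proof. The only cosmetic difference is that the paper splits into two cases---using the fully diagonal substitution $v_k\mapsto x$ for $k\in E_0$ when $d_i<\ch(\k)$ (or characteristic zero) to get $\rk_{I(\cG_{<i})}(\cG_i)\le r$ directly, and reserving the near-diagonal substitution $v_1=\cdots=v_{p-1}=x,\ v_p=z$ together with Lemma~\ref{Euler-rel} for the critical case $d_i=p$---whereas you run the near-diagonal substitution uniformly and let Lemma~\ref{Euler-rel} absorb both cases; this is harmless since in the easy case that lemma just plugs $z=y$ and returns the same bound.
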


\begin{proof}
    Let $d$ be the maximal degree in $\cG.$ By induction on the degree sequence, we may assume that $\tilde \cG_{<h}$ is $(A,B,t+2^dm_h)$-regular. Write $r =A(t+2^dm_h)^B$ and assume, to get a contradiction, that $\rk_{I(\tilde \cG_{<h})} (\tilde\cG_h) \le r. $ Then there exists a non-trivial linear combination 
    \begin{equation}\label{low-rk-comb}
        g = \sum_{E\in\binom{[d]}{d_h}}\sum_{j\in[m_h]} a_{j,E}(g_{h,j})_E
    \end{equation}
    with $\rk_{I(\tilde \cG_{<h})} (g) \le r,$ where $d$ is the maximal degree in $\cG.$ First we handle the case where $\ch(\k) = 0$ or $d_h<\ch(\k).$ Choose $E_0\in \binom{[d]}{d_h}$ such that $a_{j,E_0}$ are not all zero. Plugging into equation \eqref{low-rk-comb} $v_k = x$ for $k\in E_0$ and $v_k = 0$ for $k\not\in E_0,$ we obtain $\rk_{I(\cG_{<h})} (\cG_h) \le r.$ This is impossible when $C,D$ are sufficiently large. The remaining case is when $\ch(\k)=d_h=p.$ In this case we plug $v_1=\ldots=v_{p-1}=x$ and $v_p =z$ into equation \eqref{low-rk-comb}. This yields $\rk_{I(T_z\cG)} (\nabla_z \cG_h) \le r.$ Assuming without loss of generality that $A,B$ are at least as large as the constants in lemma \ref{Euler-rel}, we apply it to get $\rk_{I( \cG_{<h})} (\cG_h) \le Ar^B,$ which is a contradiction when $C,D$ are sufficiently large.
\end{proof}

\begin{lemma}\label{Dl-reg}
    If the degree of $\cG$ and characteristic of $\k$ are as in proposition \ref{Taylor-reg}, and $A,B,l\in\N,$ there exist constants $C(\textbf{d},A,B,l),D(\textbf{d},A,B,l)$ such that if $\cG$ is $(C,D,t)$-regular, then $D_l\cG$ is $(A,B,t)$-regular.
\end{lemma}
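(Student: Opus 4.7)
The plan is to verify the regularity of $D_l\cG$ at each layer $i\in[h]$ directly from the $(C,D,t)$-regularity of $\cG$. Fix such an $i$ and suppose for contradiction that a nontrivial linear combination
\[
F = \sum_j a_j g_{i,j}(v) + \sum_{j,k,\lambda} b_{j,k,\lambda}\,g^k_{i,j}(w_\lambda,v)
\]
satisfies $\rk_{I((D_l\cG)_{<i})}(F)\le r$ with $r$ small relative to the regularity parameters of $\cG$. All generators of $D_l\cG$ are multi-homogeneous with respect to the $(l+1)$-fold grading of $\k[w_1,\ldots,w_l,v]$, so I first decompose $F$ and any low-rank presentation of it by multi-degree; each multi-graded piece inherits a Schmidt-rank bound that grows only by a polynomial factor in $d_i$ and $l$, reducing the task to deriving a contradiction separately in each multi-degree.

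Two types of multi-degrees dispose of themselves by a single specialization. The \emph{pure $v$} component $\sum_j a_j g_{i,j}(v)$ becomes a low-rank relation modulo $I(\cG_{<i})$ upon setting every $w_\lambda\to 0$, since each generator $g^{k'}_{i',j'}(w_\lambda,v)$ of $(D_l\cG)_{<i}$ with $k'<d_{i'}$ has positive $w_\lambda$-degree and vanishes. The \emph{mixed $k=0$} component $\sum_j b_{j,0,\lambda}g_{i,j}(w_\lambda)$ becomes a low-rank relation modulo $I(\cG_{<i}(w_\lambda))$ under $v\to 0$. In either case a sufficiently large choice of $C$ and $D$ contradicts the regularity of $\cG$.

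The essential case is a mixed component $F_k = \sum_j b_{j,k,\lambda}g^k_{i,j}(w_\lambda,v)$ with $1\le k\le d_i-1$; by $\lambda$-symmetry I take $\lambda=1$ and work in $\k[w_1,v]$ modulo $I(D_1\cG_{<i})$. My plan is to substitute $v = w_1+\epsilon u$, with $\epsilon$ a formal scalar and $u$ a fresh vector variable, and extract the coefficient of $\epsilon^1$ from the resulting identity in $\k[\epsilon,w_1,u]$. The expansion
\[
g^k(w,w+\epsilon u) = \sum_{a=0}^k \binom{d_i-a}{k-a}\epsilon^a g^a(w,u)
\]
makes the $\epsilon^1$-coefficient of $F_k(w_1,w_1+\epsilon u)$ equal to $\binom{d_i-1}{k-1}\partial_u f(w_1)$ for $f=\sum_j b_{j,k,\lambda}g_{i,j}$, and the Leibniz rule bounds the Schmidt rank of $[\epsilon^1](R(w_1,w_1+\epsilon u))$ by $2r$ whenever $\rk(R)\le r$. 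A short direct computation shows that the ``$\epsilon^1$-coefficient ideal'' of $I(D_1\cG_{<i})|_{v=w_1+\epsilon u}$ is exactly $I(T_u\cG_{<i}(w_1))$: the $\epsilon^0$-parts recover $\cG_{<i}(w_1)$, and the $\epsilon^1$-parts pick up $g^1_{i',j'}(w_1,u)$ from the generators $g_{i',j'}(v)$. The binomial $\binom{d_i-1}{k-1}$ is nonzero in $\k$ for every $k\in[1,d_i-1]$ (in the characteristic-$p$ case $d_i=p$, by Lucas $\binom{p-1}{k-1}=(-1)^{k-1}\ne 0$), so I conclude that $\rk_{I(T_u\cG_{<i}(w_1))}(\partial_u f)\le 2r$. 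Renaming $(w_1,u)\to(v,z)$ and invoking Lemma \ref{Euler-rel} then gives $\rk_{I(\cG_{<i})}(f)\le A(2r)^B$, contradicting the regularity of $\cG$ when $C,D$ are chosen sufficiently large.

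The hard part is precisely this mixed case with $1\le k\le d_i-1$ in characteristic $p=d_i$: the naive specialization $w_1=\alpha v$ reduces only to the vacuous identity $\binom{p}{k}g(v)=0$, so one must use the two-variable deformation $v = w_1+\epsilon u$ and an $\epsilon^1$-coefficient extraction to convert low-rank information about $g^k$ into low-rank information about $g^1=\partial_z g$, at which point Lemma \ref{Euler-rel} (whose own proof already handles the degenerate characteristic via Grothendieck's UFD theorem) applies cleanly.
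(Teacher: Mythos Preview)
Your proof is correct, but it follows a genuinely different route from the paper's. The paper argues by induction on the degree sequence: assuming $D_l\cG_{<h}$ is already $(A,B,t+(1+dl)m_h)$-regular, it treats a nontrivial top-layer relation, specializes to a single $w_{\lambda_0}$, and then applies the \emph{full} polarization operator $\nabla_{(0,v_1)}\cdots\nabla_{(0,v_{k_0})}\nabla_{(w_1,0)}\cdots\nabla_{(w_{d_h-k_0},0)}$ to isolate the $k_0$-piece and land in the multi-linear tower $\tilde\cG$, finishing via Lemma~\ref{ml-reg}. Your argument instead works layer by layer without an outer induction, first splits by multi-degree, and for the genuinely mixed pieces $1\le k\le d_i-1$ uses the one-parameter deformation $v=w_1+\epsilon u$ and an $\epsilon^1$-coefficient extraction to convert the low-rank relation into a bound on $\nabla_u f$ modulo $I(T_u\cG_{<i})$, closing with Lemma~\ref{Euler-rel} directly. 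Both routes ultimately rely on Lemma~\ref{Euler-rel} (the paper's Lemma~\ref{ml-reg} calls it in the $d_h=p$ case), but yours bypasses the intermediate passage through $\tilde\cG$ at the cost of the multi-degree case split and the attendant constant blow-up; the paper's polarization step is a single uniform manoeuvre that kills all unwanted $k$-components simultaneously.
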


\begin{proof}
    Let $d$ be the maximal degree in $\cG$ and assume by induction that $D_l\cG_{<h}$ is $(A,B,t+(1+dl)m_h)$-regular. Set $r = A(t+(1+dl)m_h)^B$ and suppose, to get a contradiction, that $\rk_{I(D_l\cG_{<h})} (D_l\cG_h) \le r.$ Then there is a non-trivial linear combination
     \begin{equation}\label{Dl-low-rk}
         g = \sum_{j\in[m_h]} a_j g_{h,j}(v)+\sum_{j\in[m_h],0\le k\le d_h-1,\lambda\in[l]} b^k_{j,\lambda} g_{h,j}^k(w_\lambda,v)
     \end{equation}
     with $\rk_{I(D_l\cG_{<h})} (g) \le r.$ If the $a_j$ are not all zero, then we can plug in $w_\lambda =0$ for all $\lambda$ and then equation \eqref{Dl-low-rk} yields $\rk_{I(\cG_{<h})} (\cG_h) \le r,$ a contradiction. If $a_j =0$ for all $j$, there is some $\lambda_0\in[l]$ for which $b_{j,\lambda_0}^k$ are not all zero. We plug in $w_{\lambda_0} = w$ and $w_\lambda = 0$ for all $\lambda\neq \lambda_0.$ Thus $g$ specializes to 
     \[
     g'(w,v) = \sum_{j\in[m_h],0\le k\le d_h-1} b^k_{j,\lambda_0} g_{h,j}^k(w,v),
     \]
     and $I(D_l\cG_{<h})$ specializes to $I(D\cG_{<h}).$
     Now choose $0\le k_0\le d_h-1$ such that $b^{k_0}_{j,\lambda_0}$ are not all zero and apply $\nabla_{(0,v_1)}\ldots\nabla_{(0,v_{k_0})}\nabla_{(w_1,0)}\ldots\nabla_{(w_{d_h-k_0},0)}.$ This yields $\rk_{I(\tilde\cG_{<h})} (\sum_{j\in[m_h]} b^{k_0}_{j,\lambda_0} \tilde g_j)\le 2^{d_h}r. $ By lemma \ref{ml-reg} this gives a contradiction when $C,D$ are sufficiently large.
\end{proof}

\begin{lemma}\label{low-rk-sing}
    Suppose $\cG = (g_1,\ldots,g_s)\in\k[V]$ are homogeneous forms and \linebreak
    $\rk_{I(\cG)}(f) \le r. $ Then 
    \[
    \dim X(\cG)\cap S(f,\cG) \ge \dim X(\cG)-2r.
    \]
\end{lemma}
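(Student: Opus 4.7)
The plan is to leverage the hypothesis $\rk_{I(\cG)}(f)\le r$ to write $f$ modulo $I(\cG)$ as a sum of at most $r$ products of lower-degree forms, and then carve out an explicit subvariety of $X(\cG)$ of codimension $\le 2r$ on which $\nabla f$ is forced into the linear span of $\nabla g_1,\ldots,\nabla g_s$.

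Concretely, by definition of relative rank I can pick homogeneous forms $p_1,q_1,\ldots,p_r,q_r$ of positive degree strictly less than $\deg f$, together with homogeneous coefficients $a_1,\ldots,a_s$, such that
\[
f \;=\; \sum_{i=1}^r p_iq_i + \sum_{j=1}^s a_j g_j.
\]
Differentiating via the product rule and restricting to
\[
Y \;:=\; X(\cG)\cap V(p_1,\ldots,p_r,q_1,\ldots,q_r),
\]
all the terms $p_i\nabla q_i$, $q_i\nabla p_i$ and $g_j\nabla a_j$ vanish pointwise on $Y$, leaving $\nabla f(y) = \sum_j a_j(y)\nabla g_j(y)$ for every $y\in Y$. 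In particular $\nabla f$ lies in the $\k$-linear span of $\nabla g_1,\ldots,\nabla g_s$ at each point of $Y$, so $Y\subset X(\cG)\cap S(f,\cG)$, and the problem reduces to a dimension lower bound on $Y$.

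For that bound I would exploit homogeneity. Every irreducible component of $X(\cG)$ is conical and hence contains the origin, and likewise $0\in V(p_i)\cap V(q_i)$ for all $i$ since the $p_i,q_i$ are homogeneous of positive degree. Fixing a top-dimensional irreducible component $X'$ of $X(\cG)$ and applying Krull's Hauptidealsatz successively to $p_1,q_1,\ldots,p_r,q_r$, at each step the component containing the origin survives and its dimension drops by at most $1$. After $2r$ cuts this produces an irreducible component of $Y\cap X'$ through $0$ of dimension at least $\dim X'-2r = \dim X(\cG)-2r$, which yields the desired inequality.

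I do not foresee a genuine obstacle here: the choice of $Y$ is the standard Schmidt-rank trick for pushing a rank-$r$ polynomial into a codimension-$2r$ singular locus, non-emptiness of $Y$ is automatic from homogeneity, and the codimension bound is an immediate iteration of Krull. The only mild piece of bookkeeping is taking the degree-$\deg f$ component on both sides to ensure the decomposition of $f$ is homogeneous, which is routine since $I(\cG)$ is a homogeneous ideal.
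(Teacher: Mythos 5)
Your proof is correct and follows essentially the same route as the paper: decompose $f$ modulo $I(\cG)$ as $\sum_{i\le r} p_iq_i$, observe via the product rule that on $X(\cG)\cap V(p_1,\ldots,p_r,q_1,\ldots,q_r)$ the gradient of $f$ falls into the span of the $\nabla g_j$, and bound the codimension of that locus by $2r$ using Krull together with non-emptiness from the cone point. The only difference is cosmetic bookkeeping (you make the coefficients $a_j$ and the role of the origin explicit, where the paper works with directional derivatives modulo $(g_i,\nabla_v g_i)$).
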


\begin{proof}
    Write $f(x) = \sum_{j=1}^r p_j(x) q_j(x) \mod (I(\cG)).$ The directional derivative satisfies
    \[
    \nabla_v f(x) = \sum_{j=1}^r p_j(x) \nabla_v q_j(x)+ \nabla_v p_j(x) q_j(x) \mod (g_i(x), \nabla_v g_i(x)),
    \]
    so $X(\cG)\cap (p_j = q_j =0) \subset X(\cG)\cap S(f,\cG).$
    Let $X_0\subset X(\cG)$ be an irreducible component of dimension $\dim X(\cG).$
    Then every irreducible component of  the nonempty algebraic set $X_0 \cap (p_j = q_j = 0)$ has dimension $\ge \dim(X_0) -2r.$
\end{proof}

We need one more well-known lemma, which we prove for completeness.

\begin{lemma}\label{Markov}
    Let $\varphi:X\to W$ be a morphism of affine algebraic sets (not necessarily irreducible). Let $W_\eta$ be the Zariski closure of
    $$\{w\in W: \dim \varphi^{-1}(w)\ge \dim X-\dim W+\eta\}.$$ 
    Then $\dim W_\eta \le \dim W - \eta.$
\end{lemma}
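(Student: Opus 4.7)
My plan is to invoke the two classical theorems on fiber dimension: upper semi-continuity of $w \mapsto \dim \varphi^{-1}(w)$, and the fact that a dominant morphism between irreducible varieties has generic fibers of dimension equal to the difference of source and target dimensions. The argument is otherwise essentially formal.

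First, by upper semi-continuity of fiber dimension, the set $W_\eta^\circ := \{w \in W : \dim \varphi^{-1}(w) \ge \dim X - \dim W + \eta\}$ is already closed, so $W_\eta = W_\eta^\circ$. I would pick an irreducible component $C$ of $W_\eta$ of maximal dimension and reduce to showing $\dim C \le \dim W - \eta$.

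Next, I would decompose the preimage $\varphi^{-1}(C) = T_1 \cup \cdots \cup T_r$ into irreducible components, set $J := \{j : \overline{\varphi(T_j)} = C\}$, and work outside the proper closed subset $\bigcup_{j \notin J} \overline{\varphi(T_j)}$ of $C$. On this dense open subset, for $j \notin J$ the intersection $T_j \cap \varphi^{-1}(y)$ is empty, while for $j \in J$ the generic fiber dimension theorem applied to $\varphi|_{T_j} : T_j \to C$ gives $\dim(T_j \cap \varphi^{-1}(y)) = \dim T_j - \dim C$. Hence for generic $y \in C$,
$$\dim \varphi^{-1}(y) = \max_{j \in J}\bigl(\dim T_j - \dim C\bigr).$$
The set $J$ is non-empty: every $y \in C \subseteq W_\eta$ has non-empty fiber (assuming $\dim X - \dim W + \eta \ge 0$; otherwise the lemma is vacuous), so $C = \bigcup_j (C \cap \overline{\varphi(T_j)})$, and irreducibility of $C$ forces some $j$ with $\overline{\varphi(T_j)} = C$.

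Finally, combining this equality with the defining inequality $\dim \varphi^{-1}(y) \ge \dim X - \dim W + \eta$ valid on $W_\eta$, we obtain some $j \in J$ with $\dim T_j \ge \dim C + \dim X - \dim W + \eta$. Since $T_j \subseteq X$ forces $\dim T_j \le \dim X$, we conclude $\dim C \le \dim W - \eta$. The main ``obstacle'' is simply careful bookkeeping with the irreducible components of $\varphi^{-1}(C)$ and tracking which of them dominate $C$; once upper semi-continuity and generic fiber dimension are invoked, the rest is routine.
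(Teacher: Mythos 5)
Your argument is in substance the paper's: both rest on the generic fibre dimension theorem for a dominant morphism of irreducible varieties, applied to the restriction of $\varphi$ over (a top-dimensional component of) $W_\eta$, together with the trivial bound $\dim T_j \le \dim X$; you are simply more careful about decomposing into irreducible components, which the paper compresses into a ``without loss of generality.'' One side claim in your write-up is false, though harmless here: the set $W_\eta^\circ = \{w : \dim\varphi^{-1}(w)\ge \dim X-\dim W+\eta\}$ need not be closed. Chevalley's semicontinuity theorem makes $x\mapsto\dim_x\varphi^{-1}(\varphi(x))$ upper semicontinuous on the \emph{source} $X$, so $W_\eta^\circ$ is the image of a closed subset of $X$ and is in general only constructible; for instance, projecting $V(xy-1)\subset\A^2$ to the $x$-axis with $\eta=0$ gives $W_0^\circ=\A^1\setminus\{0\}$. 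Your proof survives without this claim, because $W_\eta^\circ\cap C$ is a dense constructible subset of each irreducible component $C$ of $W_\eta$ and hence contains a dense open subset of $C$, which is all that your final step (intersecting with the locus where the fibre-dimension computation is valid, and with the locus avoiding the non-dominating components) actually requires.
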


\begin{proof}
    Without loss of generality we may assume that $X$ is irreducible and $\varphi$ is dominant. Let $X_\eta = \varphi^{-1}(W_\eta).$ The map
    $\psi = \varphi\restriction_{X_\eta}:X_\eta\to W_\eta$ is dominant so for generic points $w\in W_\eta$ we have $\dim \varphi^{-1}(w) = \dim \psi^{-1}(w) = \dim X_\eta - \dim W_\eta$ (this is \cite[Theorem 9.9]{Milne}, for example). Therefore,
    \[
    \dim X-\dim W+\eta \le \dim \varphi^{-1} (w) = \dim X_\eta -\dim W_\eta \le \dim X -\dim W_\eta.
    \]
    Rearranging, we get $\dim W-\dim W_\eta \ge \eta.$
\end{proof}

\begin{proof}[Proof of proposition \ref{Taylor-reg}]
     By induction on the degree sequence, for generic \linebreak $w\in X(\cG_{<h})^l$ outside an algebraic set of codimension $t+lm_h$ we have that $D_l\cG_{<h}(w)$ is $(A,B,t+lm_hd_h))$-regular. By interesecting, this holds for $w\in X(\cG)^l$ outside an algebraic set of codimension $t.$ We can finish by showing that for $w\in X(\cG_{<h})^l$ outside an algebraic set of codimension $t+lm_h$ we have
     \[
     \rk_{I(D_l\cG_{<h}(w))} D_l\cG_h(w) > A(t+lm_hd_h)^B.
     \]
     Set $r = A(t+lm_hd_h)^B$ and let
    \[
    X \vcentcolon = X(D_l\cG_{<h}),\ Z\vcentcolon = X\cap  S_v(D_l\cG),\ 
    W \vcentcolon = X(\cG_{<h})^l.
    \]
    If $C,D$ are sufficiently large then we can apply theorem \ref{rel-sing-y} to $D_l\cG_h$ on the tower $D_l\cG_{<h}$ to get $\codim_X Z  > 3r.$ Let  $\varphi:X\to W$ be the projection on the $w$ coordinates and write $X(w),Z(w)$ for $\varphi^{-1}(w), \varphi^{-1}(w)\cap Z$ respectively. By corollary \ref{reg-seq}, $X,W$ are irreducible complete intersections. For any $w\in W,$ the principal ideal theorem yields
    \begin{align*}
        \dim X(w) &\ge \dim V -|D_l\cG_{<h}(w)| = \dim V - (|D_l\cG_{<h}| - |\cG_{<h}|^l) \\
        &= \dim (V^l\oplus V) - |D_l\cG_{<h}| - (\dim V^l - |\cG_{<h}|^l) \\
        & = \dim X - \dim W.
    \end{align*}
     By lemma \ref{Markov} the set of $w\in W$ with
    $\dim(Z(w)) \ge \dim Z - \dim W + r$ is contained in an algebraic set $W_r$ with $\codim_W (W_r) \ge r.$ For any $w\in W\setminus W_r$ we have
    \[
    \dim Z(w) < \dim Z-\dim W+r = \dim X-\dim W-2r \le \dim X(w)-2r.
    \]
    Note that $Z(w) = S(D_l\cG(w))\cap X(D_l\cG_{<h}(w))$ so by lemma \ref{low-rk-sing} we have \linebreak
    $\rk_{I(D_l\cG_{<h}(w))} D_l\cG_h(w) > r$ for these $w,$ completing the proof.   
\end{proof}

\subsection{Necessity of the degree bound in Proposition \ref{Taylor-reg}} Proposition \ref{Taylor-reg} doesn't hold when $\k$ has characteristic $p\ge 3$ and the tower $\cF$ contains forms of degree $p+1.$ For example, the form $f(v) = \sum_{i=1}^n v_i^{p+1}$ has $\rk(f) \ge n/2$ and $f^2(w,v) = 0.$ This limits our proof method and prevents us from obtaining theorem \ref{main} for higher degrees.

Now we turn to characteristic $2.$ In degree $3,$ the results of \cite{KLP} imply \linebreak
$\rk(f^i) \ge \rk(f)/4$ for any $0\le i\le 3.$ In degree $5,$  Proposition \ref{Taylor-reg} fails for the form \linebreak 
$g(v) = \sum_{i=1}^n v_i^5$ which has $\rk(g) \ge n/2$ and  $g^2(w,v) = 0.$ What happens in degree $4$ appears to be an interesting question.
\begin{question}
     Does there exist a function $\Gamma$ such that whenever $\k$ is an algebraically closed field of characteristic $2$ and
    $f\in\k[V]$ is a homogeneous polynomial of degree $4,$ we have
    \[
    \rk(f) \le \Gamma (\rk(f^2(w,v))) ?
    \]
\end{question}

\section{Completing the proof of theorems \ref{rel-sing-y} and theorem \ref{rel-sing}}

    We combine the results of the previous three sections to obtain:

    \begin{proposition}\label{gluing}
        Under the assumptions of theorem \ref{rel-sing-y}
        there exist constants $F(\textbf{d},e),G(\textbf{d},e)$ and collections of forms of positive degrees   $\cG_1,\ldots,\cG_{e+1}$ and $\cG = (g_1,\ldots,g_\tau)$ such that 
         \begin{enumerate}
            \item $\cG$ depends only on $(x,y)$ and is of degree $<e,$
            \item $\partial_z f\in I(\cG_i\cup T_z\cF\cup\cG)$ for all $i\in[e+1],$
             \item $\cup_i\cG_i\cup T_z\cF\cup\cG$ is a regular sequence and
             \item $\tau \le Ft^G.$
         \end{enumerate}
    \end{proposition}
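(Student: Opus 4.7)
I would pick $e+1$ sufficiently generic basepoints $w_1,\ldots,w_{e+1}$ in $Y\subset X(T_z\cF)$ (as in Proposition \ref{low-rank-der}), let $\cG_i$ be the collection of non-top Taylor terms of $T_z\cF$ at $w_i$, apply Proposition \ref{low-rank-der} at each $w_i$ to obtain auxiliary forms $h_{i,1},\ldots,h_{i,(e-1)t}\in\k[x,y]$ of positive degree less than $e$, and finally build $\cG$ by applying Lemma \ref{regularize} to the combined collection $\{h_{i,j}\}_{i,j}$.

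The setup is as follows. I assume $\cF$ is $(C,D,t')$-regular with $C,D,t'$ chosen large enough that Proposition \ref{low-rank-der} and Proposition \ref{Taylor-reg} (applied to $T_z\cF$ with $l=e+1$) both apply, and so that the bad set of Proposition \ref{Taylor-reg} in $X(T_z\cF)^{e+1}$ has codimension exceeding $(e+1)t$. Since $Y^{e+1}$ has codimension $(e+1)t$ in $X(T_z\cF)^{e+1}$, a generic tuple in $Y^{e+1}$ avoids this bad set; at such a tuple, $D_{e+1}(T_z\cF)(w_1,\ldots,w_{e+1})=\bigcup_i\cG_i\cup T_z\cF$ is a regular sequence in $\k[x,y,z]$, and Proposition \ref{low-rank-der} supplies the $h_{i,j}$ together with the containment $\partial_z f\in I\bigl((T_z\cF)^j_{w_i}:j\ge 0\bigr)+I(h_{i,1},\ldots,h_{i,(e-1)t})$. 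I then apply Lemma \ref{regularize} to the at most $(e+1)(e-1)t$ polynomials $\{h_{i,j}\}$ inside the graded ring $\k[x,y]$ (relative to a suitable sub-ideal, to be chosen so that condition (3) survives), producing $\cG\subset\k[x,y]$ of positive degree less than $e$ and size $\tau\le Ft^G$; conditions (1), (2), and (4) are then immediate.

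The main obstacle is condition (3): the regularity of $\bigcup_i\cG_i\cup T_z\cF\cup\cG$ as a single sequence in $\k[x,y,z]$. By Proposition \ref{Taylor-reg} the subtower $\bigcup_i\cG_i\cup T_z\cF$ is already a regular sequence, so what is needed is the relative regularity of $\cG$ modulo $I\bigl(D_{e+1}(T_z\cF)(w)\bigr)$. Regularizing $\{h_{i,j}\}$ inside a quotient by the full tower risks introducing $z$-dependence into $\cG$ and violating (1), while regularizing in $\k[x,y]$ or modulo the $(x,y)$-only subtower $D_{e+1}\cF(w')$ keeps $\cG$ in $\k[x,y]$ but does not a priori upgrade to relative regularity modulo the full $z$-involving ideal. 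The resolution I would pursue is a $z=0$ specialization argument in the spirit of Lemma \ref{tangent-rk}: a putative rank drop of a layer of $\cG$ modulo $I(D_{e+1}(T_z\cF)(w))$, upon setting $z=0$ and invoking Lemma \ref{Euler-rel}, would propagate to a rank drop modulo $I(D_{e+1}\cF(w'))$ in $\k[x,y]$, contradicting the relative regularity obtained from Lemma \ref{regularize}. Making this specialization rigorous, together with carefully tracking the cross-basepoint Taylor terms in condition (2), is the technical heart of the argument.
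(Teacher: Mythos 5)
Your setup coincides with the paper's: the same choice of a generic tuple $w\in Y^{e+1}$ avoiding the bad set of Proposition \ref{Taylor-reg} (applied via Lemma \ref{tangent-rk} to $T_z\cF$ with $l=e+1$), the same $\cG_i = (DT_z\cF)(w_i)\setminus T_z\cF$, and the same auxiliary forms $h^{w_i}_j$ from Proposition \ref{low-rank-der}. You have also correctly identified the crux: how to regularize the $h^{w_i}_j$ so that condition (3) holds without $\cG$ acquiring $z$-dependence. But at exactly that point your argument stops. You propose regularizing in $\k[x,y]$ (or modulo some unspecified ``suitable sub-ideal'') and then upgrading to relative regularity modulo the full ideal by a $z=0$ specialization combined with Lemma \ref{Euler-rel}. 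This is not carried out, and it is not clear it can be: a specialization argument shows that a rank drop modulo the big ideal \emph{implies} a rank drop modulo the specialized ideal, but the $z=0$ specialization of $I\bigl((D_{e+1}T_z\cF)(w)\bigr)$ is not $I\bigl(D_{e+1}\cF(w')\bigr)$ (the Taylor coefficients of the $\partial_z$-layers are expanded around basepoints $w_i$ with nonzero $z$-components and do not vanish at $z=0$), so the contradiction you hope to reach does not line up with the regularity you would have arranged.

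The missing idea is simpler and goes the other way: run Lemma \ref{regularize} directly in the quotient $S=\k[\bar x,\bar y,\bar z]/I\bigl((D_{e+1}T_z\cF)(w)\bigr)$, i.e.\ relative to the \emph{full} tower including the $z$-dependent functions, with target regularity $(A,B,1)$ for the constants of Theorem \ref{rel-sing} for the degree sequence of $(D_{e+1}T_z\cF)(w)$ followed by $(1,\ldots,e-1)$. The worry that this introduces $z$-dependence into $\cG$ is dispelled by the bigrading with respect to $z$: the inputs $h^{w_i}_j$ have $z$-degree zero, and a low-rank representation of a $z$-degree-zero element modulo a bi-homogeneous ideal can only involve factors and ideal generators of $z$-degree zero, so every form added during the regularization process again lies in $\k[x,y]$. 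Condition (3) then follows by choosing $A',B'$ large enough that $(D_{e+1}T_z\cF)(w)$ is $(A,B,1+Ft^G)$-regular, so that stacking $\cG$ on top yields an $(A,B,1)$-regular tower, which is a regular sequence by Corollary \ref{reg-seq}. Without this (or an equally concrete substitute), conditions (1) and (3) are not simultaneously established, so the proof is incomplete at its decisive step.
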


\begin{proof}
    By proposition \ref{low-rank-der} we have a subvariety $Y\subset X(T_z\cF)$ of codimension $t$ such that for generic points $w_0\in Y$ there are homogeneous polynomials $h^{w_0}_1(x,y),\ldots,h^{w_0}_r(x,y)$ of positive degrees $<e$ with 
    \begin{equation}\label{rk-cond}
         \partial_z f\in I((DT_z\cF)(w_0))+I(h^{w_0}_1,\ldots,h^{w_0}_r),
    \end{equation}
    where $r=(e-1)t.$
    The subvariety $Y^{e+1}\subset X(T_z\cF)^{e+1}$ is of codimension $t(e+1).$ Let $A'(\textbf{d},e),B'(\textbf{d},e)$ be constants to be chosen later. By combining lemma \ref{tangent-rk} and proposition \ref{Taylor-reg}, there exist constants $C(A',B',\textbf{d},e),D(A',B',\textbf{d},e)$ such that if $\cF$ is $(C,D,t)$-regular then for a generic point $w\in X(T_z\cF)^{e+1}$ outside a subvariety of codimension $t(e+1)+1,$ the tower $(D_{e+1}T_z\cF)(w)$ is $(A',B',t)$-regular. Therefore we can choose $w\in Y^{e+1}$ such that both \eqref{rk-cond} and our condition on the regularity of $(D_{e+1}T_z\cF)(w)$ hold. Set $\cG_i = (DT_z\cF)(w_i) \setminus T_z\cF $ and let $\cG' = (h^{w_\alpha}_\beta)_{\alpha\in[e+1],\beta\in[r]}.$ Note that the first two conditions of the proposition hold with $\cG'$ in the role of $\cG,$ but not necessarily the third. Let $S = \k[\bar x,\bar y,\bar z]/I((D_{e+1}T_z\cF)(w))$ and let $A,B$ be the constants of theorem \ref{rel-sing} for the degree sequence of $(D_{e+1}T_z\cF)(w)$ followed by $(1,\ldots,e-1).$ Applying lemma \ref{regularize} to the image of $\cG'$ in the graded ring $S$ we obtain a tower $\cG$ of size $\le Ft^G$ which is $(A,B,1)$-regular, where $F(A,B,e),G(A,B,e)$ are constants. Note that $\cG$ is still composed of functions of $(x,y)$ since functions of positive degree in $z$ don't contribute to the low rank representations appearing in the process described by lemma \ref{regularize}. Now if $A'(\textbf{d},e),B'(\textbf{d},e)$ are chosen sufficiently large then $(D_{e+1}T_z\cF)(w)$ is $(A,B,1+Ft^G)$-regular, so the tower obtained by adding $\cG$ above $(D_{e+1}T_z\cF)(w)$ is $(A,B,1)$-regular and therefore a regular sequence by  corollary \ref{reg-seq}. 
\end{proof}

To finish the proof, we need the following lemma from commutative algebra. This follows by induction on $l$ from the case of two ideals in \cite[Exercise A3.17]{Eis}.

\begin{lemma}
    Let $(I_j)_{j\in [l]}$ be ideals in a commutative ring such that the union of their generators forms a regular sequence in any order. Then $\cap_{j\in [l]} I_j = \prod_{j\in [l]} I_j.$  
\end{lemma}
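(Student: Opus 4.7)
My plan is to establish the nontrivial containment $\bigcap_{j\in [l]} I_j \subseteq \prod_{j\in[l]} I_j$ (the reverse inclusion is automatic) by induction on $l$. The base case $l=2$ is precisely the two-ideal statement cited from \cite[Exercise A3.17]{Eis}, which I take as given. A key preliminary observation is that any sub-collection of $(I_j)_{j\in[l]}$ inherits the hypothesis, since any sub-union of the generators is a sub-sequence of a regular sequence that is regular in every order, and hence itself regular in every order.

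For the inductive step with $l\ge 3$, I first apply the inductive hypothesis to $I_1,\ldots,I_{l-1}$ to obtain $\bigcap_{j=1}^{l-1} I_j = \prod_{j=1}^{l-1} I_j$. Intersecting with $I_l$ reduces the goal to the two-ideal identity
\[
\Bigl(\prod_{j=1}^{l-1} I_j\Bigr)\cap I_l \;=\; \prod_{j=1}^{l} I_j,
\]
which I then want to deduce by a second application of the base case to the pair $J:=\prod_{j=1}^{l-1} I_j$ and $I_l$. This cannot be done naively, since $J$ is generated by \emph{products} of elements of the original regular sequence rather than by elements of the sequence itself. The ``any order'' clause is what saves the day: reordering so that the generators of $I_1,\ldots,I_{l-1}$ come first and those of $I_l$ come last shows that the generators of $I_l$ act as a regular sequence on $R/(I_1+\ldots+I_{l-1})$, and combined with the inductive equality $J=\bigcap_{j<l} I_j$ this forces them to act regularly on $R/J$. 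An element-chase modeled on the base case then delivers the desired identity (equivalently, one verifies $\mathrm{Tor}_1^R(R/J,R/I_l)=0$).

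The step I expect to be the main obstacle is precisely this passage from the ``any order'' regular-sequence hypothesis on the original generators to the regularity of $I_l$'s generators on the quotient $R/J$, since $J$ is generated by products rather than by elements of the original regular sequence. The cleanest way to absorb this is probably via the Koszul complex on the generators of $I_1+\ldots+I_l$: it splits as a tensor product of the Koszul complexes for each $I_j$, and this splitting yields the required $\mathrm{Tor}$ vanishing directly, without revisiting the base-case element chase.
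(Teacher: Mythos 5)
Your overall strategy---induct on $l$, reduce to showing $\bigl(\prod_{j<l} I_j\bigr)\cap I_l = \prod_{j\le l} I_j$, and prove that via $\mathrm{Tor}_1^R(R/J, R/I_l)=0$ with $J=\prod_{j<l} I_j$---is exactly the induction the paper gestures at (it gives no details beyond ``induction on $l$ from the two-ideal case''), and you have correctly located the crux: $J$ is generated by products, so the cited exercise does not apply to the pair $(J,I_l)$ directly. The gap is that neither of your two proposed resolutions of this crux is an argument. The claim that regularity of the generators $\underline{x}^{(l)}$ of $I_l$ on $R/(I_1+\cdots+I_{l-1})$ ``combined with'' $J=\bigcap_{j<l}I_j$ ``forces'' their regularity on $R/J$ is asserted, not proved; quotienting by the sum and by the intersection are very different operations, and no implication between the two regularity statements is available for free. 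The Koszul fallback also misses: $\bigotimes_j K(\underline{x}^{(j)})$ and its partial tensor products resolve the quotients $R/\sum_{j\in S}I_j$, so the splitting directly yields $\mathrm{Tor}_{>0}(R/\sum_{j<l}I_j,\,R/I_l)=0$---Tor against the \emph{sum}, not against the product $J$, which is the module you actually need.

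The needed step is true but requires its own induction. Use the exact sequence
\[
0 \longrightarrow R/(J'\cap I_{l-1}) \longrightarrow R/J' \oplus R/I_{l-1} \longrightarrow R/(J'+I_{l-1}) \longrightarrow 0, \qquad J':=\prod_{j\le l-2} I_j,
\]
where $J'\cap I_{l-1}=J$ by the main inductive hypothesis. A short diagram chase shows that a sequence which acts regularly on the middle and right terms of a short exact sequence acts regularly on the left term (injectivity of $A/z_1A\to B/z_1B$ uses that $z_1$ is a nonzerodivisor on the right term; then pass to the quotient sequence and repeat). Regularity of $\underline{x}^{(l)}$ on $R/I_{l-1}$ is the reordering hypothesis; on $R/J'$ it is the same claim for fewer ideals; and on $R/(J'+I_{l-1})=\bar R/\prod_{j\le l-2}\bar I_j$ with $\bar R=R/I_{l-1}$ it is again the same claim for fewer ideals applied inside $\bar R$, where the surviving generators still form a regular sequence in any order. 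With that in place, $\mathrm{Tor}_i^R(R/I_l,R/J)=H_i(\underline{x}^{(l)};R/J)=0$ for $i>0$, and $\mathrm{Tor}_1^R(R/I_l,R/J)\cong (I_l\cap J)/(I_lJ)$ finishes the inductive step. So: right skeleton, right diagnosis of the obstacle, but the obstacle is not actually overcome in what you wrote.
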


\subsection{Completing the proof of theorem \ref{rel-sing}} As we mentioned after stating theorem \ref{rel-sing-y}, it implies theorem \ref{rel-sing} for the same degree sequences. The remaining cases to prove for theorem \ref{rel-sing} are when $\ch(\k) = p$ and $e=p+1$ or $e=p+2.$ To prove them we will need a suitable version of lemma \ref{Euler-rel}.

\begin{lemma}
    Suppose theorem \ref{rel-sing} holds for smaller degree sequences. Then there exist constants $A,B$ depending on the degrees such that if $f,\cF$ are homogeneous and $\deg f \in\{p+1,p+2\}$, $\cF$ is $(A,B,t)$-regular and  $\rk_{I(T\cF)} (\nabla_z f) \le t,$ then we have $\rk_{I(\cF)} (f) \le At^B.$
\end{lemma}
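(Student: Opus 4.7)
My plan is to mimic the proof of Lemma \ref{Euler-rel} by splitting into subcases according to whether $p$ divides $\deg f$. Since $\deg f \in \{p+1, p+2\}$, the only case in which $p \mid \deg f$ is $p=2$, $\deg f = 4$; in every other case $\deg f$ is a unit in $\k$ and the proof reduces to the first (Euler) case of Lemma \ref{Euler-rel}.

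For the easy subcase, I would start from a decomposition $\nabla_z f = \eta + \sum_{i=1}^{t} p_i q_i$ with $\eta \in I(T\cF)$ and substitute $z \mapsto y$. Euler's identity gives $\nabla_z f|_{z=y} = (\deg f)\cdot f$, while $(\partial_z g)|_{z=y} = (\deg g)\cdot g \in I(\cF)$ for $g \in \cF$, so $I(T\cF)|_{z=y} \subset I(\cF)$. Since $\deg f$ is invertible in $\k$ whenever $p \nmid \deg f$, this immediately yields $\rk_{I(\cF)}(f) \le t$, handling $\deg f = p+1$ for every $p$ and $\deg f = p+2$ for $p$ odd.

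The remaining case $p=2$, $\deg f = 4$ is the hard one, and I would treat it by transliterating the second half of the proof of Lemma \ref{Euler-rel}. Extracting the bi-degree $(3,1)$ component of the low-rank presentation in $(y,z)$ and factoring each surviving product into a pure $y$-form times a $z$-linear form, I obtain positive-degree forms $g_1,\ldots,g_s \in \k[y]$ of degree $<4$ with $s = O(t)$ and $\nabla_z f \in I(T\cF, g_1,\ldots,g_s)$. Applying Lemma \ref{regularize} to the images $\bar g_i \in \k[y]/I(\cF)$ produces a regularized tower $\cG \subset \k[y]$ of size polynomial in $t$; choosing $A,B$ large enough (using the inductive hypothesis of Theorem \ref{rel-sing} for the degree sequence of $\cF$ followed by $(1,\ldots,3)$), the stacked tower $(\cF,\cG)$ is $(A',B',3)$-regular for the constants of Theorem \ref{rel-sing}, hence an $R_3$-sequence by Corollary \ref{reg-seq}, and $R := \k[y]/I(\cF,\cG)$ is a UFD by Grothendieck. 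Assuming for contradiction that $\bar f \in R$ is irreducible (hence prime), $R/(\bar f)$ is reduced, so perfectness of $\k$ makes $V(\cF,\cG,f)$ generically smooth of codimension one inside $V(\cF,\cG)$. But the membership $\nabla_z f \in I(T\cF, \cG)$ forces $z\cdot \nabla f(y_0) = 0$ whenever $y_0 \in V(\cF,\cG)$ and $z\cdot \nabla g(y_0) = 0$ for all $g\in \cF$, i.e.\ $\nabla f(y_0) \in \mathrm{span}\,\nabla\cF(y_0)$ at every such smooth point, contradicting the generic smoothness. Hence $\bar f$ is reducible, giving $\rk_{I(\cF)}(f) \le 1 + |\cG| \le At^B$ for suitable $A,B$.

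The main technical obstacle I anticipate is the bi-graded extraction in the hard case: one must check that the bi-degree $(3,1)$ piece of each product $p_iq_i$ really decomposes as a pure $y$-form of lower positive degree times a $z$-linear form, so that Lemma \ref{regularize} may be applied to the $g_i$ inside $\k[y]/I(\cF)$ rather than inside the larger ring $\k[y,z]/I(T\cF)$. This should be routine by pairing $z$-degree-$0$ and $z$-degree-$1$ bi-homogeneous components of the $p_i, q_i$, and the resulting factor-of-two loss is absorbed into the polynomial bound on $|\cG|$.
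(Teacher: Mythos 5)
Your proposal is correct and takes essentially the same route as the paper: the paper's own proof of this lemma consists precisely of the observation that one substitutes $z=x$ when $p\nmid\deg f$ (leaving only $p=2,\deg f=4$) and then "follows the proof of lemma \ref{Euler-rel}," which is exactly the regularize-then-UFD-then-tangent-space argument you spell out. The bi-graded extraction you flag as the main obstacle is the same "double grading" step already used in lemma \ref{Euler-rel} and works as you describe.
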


\begin{proof}
    If $p$ doesn't divide $\deg f$ we can plug in $z = x$ and get $\rk_{I(\cF)} (f) \le t. $ In the remaining case ($p=2,\deg f =4$) we follow the proof of lemma \ref{Euler-rel} to obtain $\cG = (g_1(x),\ldots,g_r(x))$ such that $T\cF,\cG$ is an $R_3$-sequence and $r\le At^B.$ By the same argument given there, $f$ is reducible in $\k[\bar x]/I(\cF,\cG).$
\end{proof}

\begin{proof}[Proof of theorem \ref{rel-sing} when $\deg(f)\in\{p+1,p+2\}$]
    Inductively, we prove this first for degree $p+1$ and then for degree $p+2.$ The proof we gave for theorem \ref{rel-sing-y} (in the case where there is only one variable appearing) yields
    $\rk_{I(T\cF)} (\nabla_z f) \le Ft^G.$ Then we finish by the above lemma. 
\end{proof}

\section{Theorem \ref{main} for finite fields}

Throughout this section $\k$ will denote a finite field of characteristic $p.$ Our goal is to prove the following result, from which theorem \ref{main} follows.

\begin{theorem}[Regularity over the algebraic closure]\label{rk-closure}
    Let $A,B$ be constants and $\textbf{d}=(d_1,\ldots,d_h)$ a degree sequence with $d_i\le p$ for all $i.$ There exist constants $C(A,B,\textbf{d}),D(A,B,\textbf{d})$ such that if $\cF$ is a $(C,D,t)$-regular tower of degree $\textbf{d}$ in $\poly$ then $\cF$ is $(A,B,t)$-regular as a tower in $\bar\k[x_1,\ldots,x_n].$
\end{theorem}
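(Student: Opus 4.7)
The plan is to argue by contradiction via Galois descent. Suppose $\cF$ is $(C,D,t)$-regular over $\k$ for some $C,D$ to be chosen as functions of $A,B,\textbf{d}$, but some layer $i$ fails $(A,B,t)$-regularity over $\bar\k$: there is $\bar a\in\bar\k^{m_i}\setminus\{0\}$ with a decomposition
\[
\bar a\cdot\cF_i\equiv\sum_{k=1}^{r} p_kq_k\pmod{I_{\bar\k}(\cF_{<i})},\qquad r:=A(m_i+\cdots+m_h+t)^B,
\]
with $p_k,q_k\in\bar\k[x]$. The goal is to manufacture a nonzero $a\in\k^{m_i}$ whose relative $\k$-rank modulo $I_\k(\cF_{<i})$ is polynomially bounded in $r$, contradicting $\k$-regularity at layer $i$ once $C,D$ are large enough.

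The first step is to descend the witness to a small extension $\k_m/\k$. The tuples $(a,\vec p,\vec q,\vec c)$ satisfying $a\cdot\cF_i=\sum p_kq_k+\sum_\ell c_\ell f_{<i,\ell}$ form an affine $\k$-scheme $\cV$ with a $\bar\k$-point by hypothesis; equivalently, the low-rank locus $\{[a]\in\P^{m_i-1}:\rk^{\bar\k}_{I_{\bar\k}(\cF_{<i})}(a\cdot\cF_i)\le r\}$ is a closed $\k$-subvariety of the fixed-dimensional projective space $\P^{m_i-1}$. Applying Lang--Weil to a geometrically irreducible component provides $(a_0,\vec p,\vec q,\vec c)\in\cV(\k_m)$ with $a_0\ne 0$ for some $m=m(\textbf{d},r)$ depending polynomially on $r$ with constants in $\textbf{d}$. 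The main technical obstacle is establishing this degree bound uniformly in $n$, which should follow from classical bounds on secant varieties of Segre--Veronese embeddings, adapted to the relative (modulo $I$) setting.

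Granted the descent step, two Galois averagings finish the argument. First, the Galois orbit $\{\sigma(a_0)\}_\sigma$ spans a Galois-stable $\k_m$-subspace of $\k_m^{m_i}$ that descends to a nonzero $\k$-subspace $W_0\subset\k^{m_i}$. Choose $\mu\in\k_m$ so that $a:=\mathrm{Tr}_{\k_m/\k}(\mu a_0)\in W_0\setminus\{0\}$, which is possible by nondegeneracy of the trace. Summing the Galois conjugates of the decomposition of $a_0\cdot\cF_i$ (and using that $\cF$ is $\k$-defined) exhibits $a\cdot\cF_i$ as a sum of $mr$ products of $\k_m[x]$-polynomials modulo $I_{\k_m}(\cF_{<i})$, i.e.
\[
\rk^{\k_m}_{I_{\k_m}(\cF_{<i})}(a\cdot\cF_i)\le mr.
\]
Second, expressing each $\k_m[x]$-factor of this presentation in a $\k$-basis $\{\alpha_j\}$ of $\k_m$ and using the multiplication relations $\alpha_j\alpha_{j'}=\sum_\ell c_\ell^{j,j'}\alpha_\ell$ to extract the coefficient of $1\in\k_m$ from both sides converts any $\k_m$-rank-$R$ presentation of $g\in\k[x]$ modulo $I_{\k_m}$ into a $\k$-rank-$\le m^2R$ presentation modulo $I_\k$; applied to $g=a\cdot\cF_i$ with $R=mr$ this yields $\rk^\k_{I_\k(\cF_{<i})}(a\cdot\cF_i)\le m^3 r$.

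Since $m$ is polynomial in $r$ with constants depending only on $\textbf{d}$, the bound $m^3r$ is of the form $A'(m_i+\cdots+m_h+t)^{B'}$ for suitable $A'(A,B,\textbf{d}),B'(A,B,\textbf{d})$. Choosing $C\ge A'$ and $D\ge B'$ contradicts the $(C,D,t)$-$\k$-regularity of $\cF$ at layer $i$, completing the proof. The Galois descents are routine once set up; the crux of the argument is the uniform-in-$n$ Lang--Weil bound in the first step, which controls how badly the field of definition of a witness can explode.
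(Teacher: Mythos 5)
There is a genuine gap, and it sits exactly where you placed it: the descent step. Everything after it (the trace averaging over the Galois orbit, and the conversion of a $\k_m$-decomposition into a $\k$-decomposition at the cost of a factor $m^2$ via a basis of $\k_m/\k$) is standard and correct. But the claim that a witness of low relative rank over $\bar\k$ can be found over an extension $\k_m$ with $m$ polynomial in $r$, \emph{uniformly in $n$}, is not a routine Lang--Weil application --- it is essentially the whole content of the theorem. The incidence variety of tuples $(a,\vec p,\vec q,\vec c)$ lives in an affine space whose dimension grows with $n$ (the coefficients of the $p_k,q_k$ and of the ideal coefficients $c_\ell$), so its degree is not a priori controlled; and the low-rank locus in $\P^{m_i-1}$, while it sits in a space of bounded dimension, does not thereby have bounded degree (already for quadrics the determinantal loci have degree growing with $n$, and here the factors $p_k,q_k$ are arbitrary forms modulo an arbitrary regular ideal, so no classical Segre--Veronese secant bound applies). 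Lang--Weil moreover needs a geometrically irreducible component defined over $\k$, which again requires a degree bound to control how far Galois can move the components. In short, you have reduced the theorem to the statement ``low relative rank over $\bar\k$ is witnessed over a small extension,'' which is not easier than what you set out to prove.

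The paper avoids descent entirely. It first reduces to multilinear towers (lemmas \ref{ml-reg-fin} and \ref{polarization}, using the finite-field Euler-type lemma \ref{Euler-rel-fin} to pass between $\cF$ and its polarization $\tilde\cF$), and then proves theorem \ref{rk-closure-ml} via the universality theorem \ref{universal}: a sufficiently regular multilinear tower over the finite field $\k$ can be specialized, by $\k$-linear substitutions $T_1\times\cdots\times T_d$, onto an \emph{explicit} model tower $\cH$ of diagonal-type forms $h_{i,j}=\sum_s\prod_k (x_k(i))_{j,s}$. The surjectivity needed for this comes from the counting results of \cite{rel}. Since low rank over $\bar\k$ is preserved by linear substitution and restriction to coordinate subspaces, a low-rank linear combination of $\cG_i$ over $\bar\k$ would force a low-rank $h_{i,1}$ over $\bar\k$, which is refuted directly by computing the codimension of $S(h_{i,1})$ (lemma \ref{low-rk-sing}). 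This replaces the problem of descending a decomposition with the much easier problem of bounding the $\bar\k$-rank of one explicit form from below. If you want to salvage your approach you would need an unconditional, $n$-independent bound on the degree of the field of definition of a minimal relative-rank decomposition; no such bound is currently known, and supplying one would be a substantial result in its own right.
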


\begin{proof}[Proof of theorem \ref{main} for finite fields, assuming theorem \ref{rk-closure}]
    Let $A,B$ be the constants of theorem \ref{rel-sing} for degree sequence $\textbf{d} = (1,2,\ldots,d),$ and let $C(A,B,\textbf{d}),D(A,B,\textbf{d})$ be the constants of theorem \ref{rk-closure}. By lemma \ref{regularize}, there exists a $(C,D,t)$-regular tower $\cG$ in $\poly$ with degrees $(1,\ldots,d)$ such that $(g_1,\ldots,g_s)\in I(\cG)$ and $|\cG| \le F(s+t)^G,$ where $F(C,D),G(C,D)$ are constants. By theorem \ref{rk-closure} the tower $\cG$ is $(A,B,t)$-regular as a tower in $\bar\k[x_1,\ldots,x_n],$ which implies it is an $R_t$-sequence by corollary \ref{reg-seq}.  
\end{proof}

The proof of theorem \ref{rk-closure} will be, of course, by induction on the degree sequence. We now assume it holds for lower degree sequences than $\textbf{d} = (d_1,\ldots,d_h)$ and obtain some useful consequences. First, we have a version of lemma \ref{Euler-rel} for finite fields.

\begin{lemma}\label{Euler-rel-fin}
    Assume theorem \ref{rk-closure} holds for lower degree sequences than $(d_1,\ldots,d_h).$ There exist constants $A,B$ depending on the degrees such that if $f\in\poly$ is homogeneous of degree $d_h,$ $\cF$ is an $(A,B,t)$-regular tower in $\poly$ of degrees $(d_1,\ldots,d_{h-1})$ and  $\rk_{I(T\cF)} (\nabla_z f) \le t,$ then we have $\rk_{I(\cF)} (f) \le At^B.$
\end{lemma}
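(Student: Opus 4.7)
The plan is to mimic the proof of Lemma \ref{Euler-rel}, substituting the inductively available Theorem \ref{rk-closure} for the step in which algebraic closure is needed to apply Grothendieck's UFD theorem. We split into two cases based on whether $p$ divides $d_h$.

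In the first case, $p \nmid d_h$, the bound will follow directly from Euler's identity. The hypothesis provides forms $p_j, q_j \in \k[x, z]$ of positive degree with $\partial_z f - \sum_{j=1}^{t} p_j q_j \in I(T\cF)$. Specializing $z = x$, the left-hand side becomes $\sum_i x_i \partial_{x_i} f(x) = d_h \cdot f(x)$, each generator of $I(T\cF)$ falls into $I(\cF)$ (since $\partial_z g\bigr|_{z=x} = (\deg g) \cdot g$ for any form $g \in \k[x]$), and the sum $\sum p_j q_j$ remains a sum of $t$ products of positive-degree forms in $\k[x]$. Dividing by $d_h \in \k^\times$ yields $\rk_{I(\cF)}(f) \le t$, well within the allowed bound.

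The substance is in the case $d_h = p$. Using the $(x, z)$-bi-grading one may assume the hypothesis in the sharper form $\partial_z f \in I(T\cF, g_1, \ldots, g_t)$ with each $g_i \in \k[x]$ of positive degree $< p$. Apply Lemma \ref{regularize} to the images $\bar g_1, \ldots, \bar g_t$ in the graded ring $\k[x]/I(\cF)$ to produce a tower $\cG \subset \k[x]$ of degrees $(1, 2, \ldots, p-1)$ with $|\cG|$ polynomial in $t$, whose reduction modulo $I(\cF)$ is $(A_1, B_1, 3)$-regular over $\k$ for constants $A_1, B_1$ to be fixed below. Provided $\cF$ is taken sufficiently regular, the stacked tower $(\cF, \cG)$ is itself $(A_1, B_1, 3)$-regular over $\k$. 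The combined degree sequence of $(\cF, \cG)$ has every entry $\le p$ and strictly fewer degree-$p$ layers than $(d_1, \ldots, d_h)$, so it is lower in the inductive order; choosing $A_1, B_1$ to be the constants that Theorem \ref{rk-closure} demands in order to deliver $(A_0, B_0, 3)$-regularity over $\bar\k$, where $A_0, B_0$ are the constants Corollary \ref{reg-seq} requires for the $R_3$ conclusion, we obtain that $(\cF, \cG)$ is an $R_3$-sequence in $\bar\k[x]$. Then Grothendieck's theorem gives that $\bar R := \bar\k[x]/I(\cF, \cG)$ is a UFD.

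Mirroring the end of the proof of Lemma \ref{Euler-rel}, if $\rk_{I(\cF)}(f)$ exceeded the target bound $A t^B$ the image $\bar f \in \bar R$ would be irreducible, hence prime, so $X(\cF, \cG, f) \subset \A^n_{\bar\k}$ would be a complete intersection of codimension $|\cF| + |\cG| + 1$, generically smooth. But the containment $\partial_z f \in I(T\cF, \cG)$ forces, at every smooth point $y_0$ of $X(\cF, \cG)$ and every $z$ in the tangent space there, the equality $\nabla f(y_0) \cdot z = 0$; hence $\nabla f(y_0)$ lies in the span of $\nabla \cF(y_0), \nabla \cG(y_0)$, and the tangent space to $X(\cF, \cG, f)$ at $y_0$ coincides with that of $X(\cF, \cG)$, whose codimension is only $|\cF| + |\cG|$, contradicting smoothness.

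The main obstacle is book-keeping the chain of constants $(A, B) \leftarrow (A_0, B_0) \leftarrow (A_1, B_1) \leftarrow (\text{regularity of } \cF)$, so that the inductively applied Theorem \ref{rk-closure} lifts the $\k$-regularity all the way to the $\bar\k$-regularity that Corollary \ref{reg-seq} demands. All the dependencies are polynomial, so the composition is routine; the genuinely new ingredient relative to Lemma \ref{Euler-rel} is the use of Theorem \ref{rk-closure} as the bridge from the finite-field hypothesis to the algebraic-closure geometry required by the UFD argument.
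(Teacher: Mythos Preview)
Your argument tracks the paper's proof almost line for line: the same case split on whether $p\mid d_h$, the same reduction via the $(x,z)$-grading to $\partial_z f\in I(T\cF,g_1,\ldots,g_t)$ with $g_i\in\k[x]$, the same use of Lemma~\ref{regularize} to build $\cG$, the same inductive appeal to Theorem~\ref{rk-closure} to upgrade $\k$-regularity of $(\cF,\cG)$ to $\bar\k$-regularity and hence to an $R_3$-sequence, and the same tangent-space contradiction at the end.

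There is, however, one genuine slip. You run the UFD argument in $\bar R:=\bar\k[x]/I(\cF,\cG)$ and assert that if $\rk_{I(\cF)}(f)$ exceeds the target bound then $\bar f$ is irreducible in $\bar R$. But the rank in this lemma is rank over $\k$, and reducibility of $\bar f$ in $\bar R$ only produces a factorization over $\bar\k$, hence only a bound on $\rk_{I(\cF)}(f)$ computed over $\bar\k$. That is not what the lemma claims, and it is not what Lemma~\ref{ml-reg-fin} needs downstream. The paper handles this by carrying out the irreducibility argument in $R:=\poly/I(\cF,\cG)$ over $\k$ itself: once $(\cF,\cG)$ is an $R_3$-sequence (a geometric condition you have already secured), Grothendieck's theorem makes $R$ a UFD just as in Corollary~\ref{Euler-p}; then irreducibility of $f$ in $R$ gives primeness in $R$, so $R/(f)$ is reduced, and since $\k$ is perfect the base change $\bar\k[x]/I(\cF,\cG,f)$ is reduced as well, after which your tangent-space contradiction goes through verbatim. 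Replacing $\bar R$ by $R$ in your last paragraph closes the gap and makes your proof coincide with the paper's.
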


\begin{proof}
     If $d_h < p$ then we can just plug in $z = x$ and get $\rk_{I(\cF)} (f) \le t.$ Otherwise, as in the proof of lemma \ref{Euler-rel} we get a tower $\cG$ in $\poly$ of size $|\cG|\le Ft^G$ and degree $(1,\ldots,p-1)$ such that $(\cF,\cG)$ is a $(C,D,3)$-regular tower in $\poly$ and $\nabla_z f\in I(T\cF,\cG).$ By theorem \ref{rk-closure}, $(\cF,\cG)$ an $(A,B,3)$-regular tower in $\bar\k[x_1,\ldots,x_n]$ and so by corollary \ref{reg-seq} $(\cF,\cG)$ is an $R_3$-sequence. We claim that the image of $f$ in $R := \poly/I(\cF,\cG)$ is reducible, which implies $\rk_{I(\cF)} (f) \le Ft^G+1$ as desired. This will be proved the same way as corollary \ref{Euler-p}. Suppose, to get a contradiction, that the image of $f$ in $R$ is irreducible. Then in particular $R/(f)$ is reduced and so by \cite[\href{https://stacks.math.columbia.edu/tag/030U}{Tag 030U}]{stacks} $\bar\k[x_1,\ldots,x_n]/I(\cF,\cG,f)$ is also reduced. Then the algebraic set $X:= V(\cF,\cG,f)$ has $\codim_{\A^n} X = |\cF|+|\cG|+1$ and ideal \linebreak
     $I(X) = I(\cF,\cG,f).$ $\nabla_z f(x)$ vanishes for any $x\in X$ and $z\in T_x X,$ so the codimension of the tangent space is always $\le |\cF|+|\cG|$ which contradicts the fact that $X$ must contain smooth points. 
\end{proof}

\subsection{Reduction to towers of multi-linear forms}
Recall the tower $\tilde \cF$ of multi-linear forms obtained from $\cF$ which we defined in section 6. The next lemma says that if $\cF$ is sufficiently regular then so is $\tilde\cF.$

\begin{lemma}\label{ml-reg-fin}
    Assume theorem \ref{rk-closure} holds for lower degree sequences than $(d_1,\ldots,d_h)$ and let $A,B$ be constants. There exist constants $C(\textbf{d},A,B),D(\textbf{d},A,B)$ such that if $\cF$ is a $(C,D,t)$-regular tower in $\poly$, then $\tilde\cF$ is an $(A,B,t)$-regular tower over $\k.$
\end{lemma}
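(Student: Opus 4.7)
The plan is to mimic the proof of lemma \ref{ml-reg} for algebraically closed fields, substituting lemma \ref{Euler-rel-fin} for lemma \ref{Euler-rel}. I would proceed by induction on the degree sequence $\mathbf{d}$ of $\cF$, assuming as usual that degrees are non-decreasing so that $d := d_h$ is the maximum. The inductive hypothesis, applied with slightly larger constants $C', D'$, gives that $\tilde \cF_{<h}$ is $(A, B, t + 2^d m_h)$-regular over $\k$. Suppose toward a contradiction that some non-trivial $\k$-linear combination $g = \sum_{j \in [m_h]} a_j \tilde f_{h, j}$ satisfies $\rk_{I(\tilde \cF_{<h})}(g) \le r := A(t + 2^d m_h)^B$.

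In the subcase $d_h < p$, I would substitute $v_1 = \ldots = v_d = x$. Then $g$ specializes to $d_h! \sum_j a_j f_{h, j}(x)$, a nonzero combination since $d_h! \in \k^\times$, while each generator $(f_{i, j})_E$ of $I(\tilde \cF_{<h})$ specializes to $d_i! f_{i, j}(x)$, so the specialized ideal contains $I(\cF_{<h})$ up to units. Hence $\rk_{I(\cF_{<h})}(\sum_j a_j f_{h, j}) \le r$, contradicting the $(C, D, t)$-regularity of $\cF$ for sufficiently large $C, D$. In the subcase $d_h = p$, I would instead substitute $v_1 = \ldots = v_{p-1} = x$ and $v_p = z$. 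Then $g$ specializes to $-\sum_j a_j \nabla_z f_{h, j}(x)$, and the specialization of $I(\tilde \cF_{<h})$ contains $f_{i, j}$ for every layer $i < h$ with $d_i < p$ (from the contribution of $E \subseteq [p-1]$) together with $\nabla_z f_{i, j}$ for every layer $i < h$ (from the contribution of $E$ containing $p$). This specialized ideal lies inside $I(T \cF_{<h})$, so $\rk_{I(T\cF_{<h})}(\nabla_z \sum_j a_j f_{h, j}) \le r$. Lemma \ref{Euler-rel-fin} would then yield $\rk_{I(\cF_{<h})}(\sum_j a_j f_{h, j}) \le A' r^{B'}$, again contradicting regularity of $\cF$.

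The main subtlety is the second subcase: when $\cF_{<h}$ has layers of degree exactly $p$, the specialization of $I(\tilde \cF_{<h})$ fails to recover the forms $f_{i, j}$ themselves, retaining only their derivatives $\nabla_z f_{i, j}$. Fortunately this is exactly the input lemma \ref{Euler-rel-fin} is designed to handle, promoting a rank bound on the derivative modulo $T\cF$ to one on the original form modulo $\cF$, and it is precisely here that the inductive hypothesis of theorem \ref{rk-closure} enters. Beyond that, the remaining work is the routine bookkeeping of nesting the polynomial constants $C, D, C', D', A', B'$ so that the inductive hypothesis on $\tilde\cF_{<h}$, the hypotheses of lemma \ref{Euler-rel-fin}, and the targeted bound $r$ are all simultaneously satisfied.
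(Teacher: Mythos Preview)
Your approach is exactly the paper's: the paper's entire proof is the single sentence ``This follows from lemma \ref{Euler-rel-fin} the same way that lemma \ref{ml-reg} follows from lemma \ref{Euler-rel},'' and you have correctly unpacked what that means, including the case split on whether $d_h<p$ or $d_h=p$ and the use of lemma \ref{Euler-rel-fin} to pass from a bound on $\nabla_z(\sum_j a_j f_{h,j})$ modulo $I(T\cF_{<h})$ back to a bound on $\sum_j a_j f_{h,j}$ modulo $I(\cF_{<h})$.

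One small caution: your standing assumption that the degree sequence is non-decreasing (so that $d_h=d$ and the top layer of $\tilde\cF$ consists only of the single forms $\tilde f_{h,j}$) is not something the paper assumes, and towers with non-monotone degrees do arise (e.g.\ $D_l\cG(w)$ in section~6). When $d_h<d$, the top layer of $\tilde\cF$ contains $(f_{h,j})_E$ for every $E\in\binom{[d]}{d_h}$, and your all-$x$ substitution could kill a non-trivial combination if the coefficients $a_{j,E}$ happen to sum to zero over $E$. The paper's proof of lemma \ref{ml-reg} handles this by choosing some $E_0$ with $a_{j,E_0}$ not all zero and substituting $v_k=x$ for $k\in E_0$, $v_k=0$ otherwise; this is a one-line fix and does not affect the structure of your argument. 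In the $d_h=p$ case your assumption is automatically satisfied (since all $d_i\le p$), so no change is needed there.
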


\begin{proof}
    This follows from lemma \ref{Euler-rel-fin} the same way that lemma \ref{ml-reg} follows from lemma \ref{Euler-rel}. 
\end{proof}

The converse over $\bar\k$ (and over any field) is also true and is proved easily.

\begin{lemma}\label{polarization}
    Suppose $\cF$ is a tower in $\bar\k[x_1,\ldots,x_n]$ with degrees $(d_1,\ldots,d_h)$ and $d=\max(d_1,\ldots,d_h).$ If $\tilde\cF$ is $(2^dA,B,t)$-regular then $\cF$ is $(A,B,t)$-regular.
\end{lemma}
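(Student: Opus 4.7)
The plan is to prove the contrapositive: if $\cF$ is not $(A,B,t)$-regular, I will produce a non-trivial linear combination in some layer of $\tilde\cF$ whose relative rank is small enough to violate $(2^dA,B,t)$-regularity. Suppose that for some $i\in[h]$ there is a non-trivial combination $f=\sum_j a_j f_{i,j}$ with $\rk_{I(\cF_{<i})}(f)\le r$, where $r=A(m_i+\cdots+m_h+t)^B$. Fix a presentation $f=\sum_{k=1}^r p_k q_k+\sum_l u_l s_l$ with $s_l\in\cF_{<i}$ and all summands homogeneous of degree $d_i$.

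The key step is to polarize this identity in the single variable of $f$. For any product $gh$ of forms of degrees $a$ and $d_i-a$, iterated Leibniz and evaluation at $0$ yield
\[
\widetilde{gh}(v_1,\ldots,v_{d_i}) = \sum_{E\in\binom{[d_i]}{a}} \tilde g((v_j)_{j\in E})\, \tilde h((v_j)_{j\in[d_i]\setminus E}).
\]
Applying this to each $p_k q_k$ produces at most $\binom{d_i}{\deg p_k}\le 2^{d_i}$ products of forms of strictly lower degree per original summand, contributing a term of Schmidt rank at most $2^{d_i}r$. Applying the same identity to each $u_l s_l$ with $s_l\in\cF_{<i}$, every summand of the resulting expansion contains a factor $\tilde s_l((v_j)_{j\in E})=(s_l)_E(v_1,\ldots,v_d)\in\tilde\cF_{<i}$, so the entire contribution lies in $I(\tilde\cF_{<i})$.

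Identifying $\tilde f(v_1,\ldots,v_{d_i})$ with $(f)_{E_0}(v_1,\ldots,v_d)$ for $E_0=\{1,\ldots,d_i\}$, I conclude
\[
\rk_{I(\tilde\cF_{<i})}\Bigl(\sum_j a_j (f_{i,j})_{E_0}\Bigr) \le 2^{d_i}r \le 2^d r.
\]
The combination on the left is a non-trivial linear combination of distinct elements of $\tilde\cF_i$, and since $|\cF_k|\le|\tilde\cF_k|$ for each $k$, the right-hand side is bounded by $2^d A(|\tilde\cF_i|+\cdots+|\tilde\cF_h|+t)^B$, contradicting $(2^dA,B,t)$-regularity of $\tilde\cF$. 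There is no serious obstacle here: the only point requiring care is the combinatorial bookkeeping confirming that the polarization produces at most $2^{d_i}\le 2^d$ new product summands per original summand, and that every $u_l s_l$ term indeed lies in $I(\tilde\cF_{<i})$ after polarizing. Notably, unlike the forward direction (Lemma \ref{ml-reg-fin}), this argument uses no hypothesis on $\textnormal{char}(\bar\k)$.
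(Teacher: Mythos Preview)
Your proof is correct and follows essentially the same approach as the paper: the paper's one-line argument ``Applying $\nabla_{v_1}\ldots\nabla_{v_{d_i}}$'' is precisely the polarization you carry out, and the factor $2^{d_i}$ comes from the same iterated Leibniz expansion you wrote down explicitly. Your additional remark that $|\cF_k|\le|\tilde\cF_k|$ and the check that the $u_l s_l$ terms land in $I(\tilde\cF_{<i})$ make explicit details the paper leaves implicit, but the underlying idea is identical.
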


\begin{proof}
    Suppose $\rk_{I(\cF_{<i})} (\cF_i) \le r$ for some $i,r.$ Applying $\nabla_{v_1}\ldots\nabla_{v_{d_i}},$ we get \linebreak
    $\rk_{I(\tilde\cF_{<i})} (\tilde\cF_i) \le 2^{d_i}r.$
\end{proof}

Given these two lemmas, theorem \ref{rk-closure} will follow from a corresponding theorem for towers of multi-linear forms. To define these multi-linear towers we set up some notation. Let $ V_1,\ldots,V_d$ be finite dimensional vector spaces over $\k.$ For $I\subset [d]$ set $V^I:= \prod_{i\in I} V_i.$

\begin{definition}
     A tower $\cG$ in $\k[V^{[d]}]$ is a multi-linear tower if for all $g_{i,j}\in\cG_i$ there exists some $I(g_{i,j})\in \binom{[d]}{d_i}$ such that $g_{i,j}$ only depends on the $V^I$ coordinates and $g_{i,j}:V^I\to\k$ is multi-linear. For $I\in \binom{[d]}{d_i}$ we write $\cG_i^I$ for the collection of forms $g_{i,j}$ with $I(g_{i,j}) = I.$
\end{definition}

For example $\tilde\cF$ is a multi-linear tower. We can now state the version of theorem \ref{rk-closure} for multi-linear towers. Note that now there is no restriction on the characteristic.

\begin{theorem}\label{rk-closure-ml}
     Let $A,B$ be constants and $\textbf{d}=(d_1,\ldots,d_h)$ a degree sequence. There exist constants $C(A,B,\textbf{d}),D(A,B,\textbf{d})$ such that if $\cG$ is a $(C,D,t)$-regular multi-linear tower of degree $\textbf{d}$ over $\k$ then $\cG$ is $(A,B,t)$-regular as a tower over $\bar\k.$
\end{theorem}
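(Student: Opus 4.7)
The plan is to induct on the degree sequence $\textbf{d},$ with base case a tower of linear forms (where linear (in)dependence is invariant under field extension). For the inductive step, given constants $A,B,$ I would choose $C,D$ large in terms of $A,B,\textbf{d}$ and argue by contradiction: suppose $\cG$ is a $(C,D,t)$-regular multi-linear tower over $\k$ that fails to be $(A,B,t)$-regular over $\bar\k,$ and let $i$ be the topmost layer where regularity fails, with $m := m_i + \ldots + m_h.$ Then there is a nonzero $(a_j) \in \bar\k^{m_i}$ such that $F := \sum_j a_j g_{i,j}$ has relative Schmidt rank at most $r := A(m+t)^B$ over $\bar\k$ on $\cG_{<i}.$

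The first step is Galois descent on the coefficients. Since $\k$ is perfect, $\bar\k/\k$ is Galois. For any $\sigma \in \textnormal{Gal}(\bar\k/\k),$ applying $\sigma$ to a rank-$r$ decomposition of $F$ yields the same bound for $F^\sigma := \sum_j \sigma(a_j) g_{i,j}.$ Working in a finite Galois extension $\k'/\k$ containing all $a_j,$ the $\k'$-span of the Galois orbit of $(a_j)$ is Galois-stable and therefore descends to a nonzero $\k$-subspace $W_0 \subset \k^{m_i}.$ Any nonzero $(b_j) \in W_0$ can be written as a $\k'$-combination of at most $m_i$ Galois conjugates of $(a_j),$ so $F' := \sum_j b_j g_{i,j} \in \k[V]$ has relative Schmidt rank at most $m_i r$ over $\bar\k$ on $\cG_{<i}.$

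Next I would multi-linearize the $\bar\k$-decomposition of $F'.$ Since $F'$ is multi-linear of some multi-degree $\delta \in \{0,1\}^d$ and the generators of $I(\cG_{<i})$ are multi-linear, extracting the multi-homogeneous component of multi-degree $\delta$ from a Schmidt decomposition of $F'$ over $\bar\k$ yields an identity
\[
F' = \sum_{k=1}^{r_2} P_k Q_k + \sum_\alpha g_\alpha U_\alpha
\]
in which every $P_k, Q_k, U_\alpha$ is multi-linear, each product $P_k Q_k$ is multi-linear on complementary subsets of $\textnormal{supp}(\delta),$ and $r_2 \leq 2^d m_i r.$

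The final and hardest step is to descend this multi-linear decomposition from $\bar\k$ to $\k$ while keeping the rank polynomially bounded in $r.$ The key observation is that for each fixed partition $I \subsetneq \textnormal{supp}(\delta),$ the partition-$I$ rank of a multi-linear form is the matrix rank of the associated bilinear pairing between the space of multi-linear forms on $V^I$ and that on $V^{I^c},$ and this matrix rank is invariant under field extension. Grouping the decomposition terms by partition type and descending the Galois-stable $\k'$-subspaces spanned by the $P_k$'s and $Q_k$'s (and the $U_\alpha$'s) to $\k$-subspaces --- which works in any characteristic, unlike trace averaging, which would require dividing by $[\k':\k]$ --- one assembles a $\k$-decomposition of $F'$ whose rank is polynomially bounded in $r,$ contradicting $(C,D,t)$-regularity of $\cG$ over $\k$ once $C,D$ are chosen large enough. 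The main obstacle is precisely this descent: the partition-$I$ components of a $\bar\k$-decomposition of the $\k$-form $F'$ are generally only defined over $\bar\k,$ not over $\k,$ and the interaction with the ideal multipliers $U_\alpha$ (also over $\bar\k$) requires careful bookkeeping, though it is manageable because the ideal generators $g_\alpha$ themselves are multi-linear over $\k.$
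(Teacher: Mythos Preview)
Your approach via Galois descent is natural but genuinely different from the paper's, and the final step---descending the multi-linear decomposition from $\bar\k$ to $\k$---has a real gap that is not just bookkeeping.

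Your ``key observation'' that partition-$I$ rank equals a matrix rank and is therefore field-invariant is correct, but it applies only to a $\k$-rational form with a \emph{single fixed} partition $I$. After grouping the $\bar\k$-decomposition by partition type you obtain $F' = \sum_I F'_I + \sum_\alpha g_\alpha U_\alpha$, where each summand $F'_I$ is only $\bar\k$-rational; the matrix-rank argument says nothing about these pieces individually, and you cannot apply it to $F'$ itself because $F'$ is not expressed with a single partition. Your proposed fix---descending the Galois-orbit spans of the $P_k$'s and $Q_k$'s---does produce $\k$-subspaces, but their dimensions can be as large as $r_2\cdot[\k':\k]$, and $[\k':\k]$ is not bounded in terms of $r,t,\mathbf d$: it depends on the coefficients of an arbitrary $\bar\k$-decomposition, hence implicitly on $n$. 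Finally, even if one grants a polynomial bound on absolute partition rank over $\k$, the \emph{relative} rank still needs the ideal contribution separated out; absorbing each $g_\alpha U_\alpha$ as a partition-rank term costs $|\cG_{<i}|=m_1+\cdots+m_{i-1}$, which is not controlled by the regularity inequality at layer $i$ (that involves only $m_i+\cdots+m_h+t$).

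The paper sidesteps descent entirely. It proves a universality result (Theorem~\ref{universal}): a sufficiently regular multi-linear tower $\cG$ over $\k$ can be specialized, via $\k$-linear maps $T_j:\k^N\to V_j$, to \emph{any} prescribed tower $\cH$ of the same shape, modulo variables of index $>N_i$ at layer $i$. One then takes $\cH$ to consist of explicit diagonal forms $h_{i,j}=\sum_{s\in[2r_i]}\prod_{k\in I}(x_k(i))_{j,s}$. If $\cG$ failed to be $(A,B,t)$-regular over $\bar\k$ at some layer $i$, composing with $T$ and restricting to a coordinate subspace would force $\rk_{\bar\k}(h_{i,1})<r_i$; but $S(h_{i,1})$ is visibly a union of linear subspaces of codimension $2r_i$, contradicting Lemma~\ref{low-rk-sing}. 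Everything is $\k$-rational until the last step, where the $\bar\k$-rank of an explicit form is read off from its singular locus. The surjectivity underlying Theorem~\ref{universal} is itself nontrivial and is imported from \cite[Lemma~3.2]{rel}; that is where the real ``descent'' work is hidden.
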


\begin{proof}[Proof of theorem \ref{rk-closure} assuming theorem \ref{rk-closure-ml}]
By lemma \ref{ml-reg-fin}, $\tilde \cF$ is regular over $\k.$ By theorem \ref{rk-closure-ml} it's regular over $\bar\k.$ By lemma \ref{polarization} we get that $\cF$ is regular over $\bar\k.$   
\end{proof}

\subsection{Universality and proof of theorem \ref{rk-closure-ml}}

The proof of theorem \ref{rk-closure-ml} will be based on a universality result for sufficiently regular towers of multi-linear forms.

\begin{theorem}[Universality of regular towers]\label{universal}
    Let $\textbf{d} =(d_1,\ldots,d_h)$ be a degree sequence with $d = \max_{i\in[h]}(d_i)$ and let $N=N_1>\ldots>N_h$ be positive integers. There exist constants $A(\textbf{d}),B(\textbf{d})$ such that the following holds. Suppose $\cG,\cH$ are multi-linear towers in $\k[V^{[d]}]$ of degree $\textbf{d}$ and that for all $i\in[h]$ and $I\in\binom{[d]}{d_i}$ we have $|\cG_i^I| = |\cH_i^I|.$ Suppose also that for all $i\in[h]$ we have $\rk_{I(\cG_{<i})}(\cG_i) > A(N_i(m_i+\ldots+m_h))^B.$ Then for all $j\in[d]$ there exist linear maps $T_j:\k^N\to V_j$  such that for all $i\in[h]$ and $I\in\binom{[d]}{d_i}$ we have
    \[
    \cG^I_i\circ (T_1\times\ldots\times T_d) = \cH_i^I \mod(x_k(l))_{k\in[d],l>N_i}.
    \]
\end{theorem}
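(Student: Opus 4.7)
I would construct the linear maps $T_j:\k^N\to V_j$ in blocks of columns, one per layer, by downward induction on $i\in\{h,h-1,\dots,1\}$, with the convention $N_{h+1}:=0$. At step $i$, I have in hand restrictions $T_j^{(i+1)}:\k^{N_{i+1}}\to V_j$ already matching the layer-$i'$ conditions for every $i'>i$, and I must extend to $T_j^{(i)}:\k^{N_i}\to V_j$ by specifying the new column vectors $u_{j,l}:=T_j(e_l)\in V_j$ for $N_{i+1}<l\le N_i$, so that the layer-$i$ identity $\cG_i^I\circ T^{(i)}=\cH_i^I$ holds on $(\k^{N_i})^I$ for every $I\in\binom{[d]}{d_i}$ and every form in $\cG_i^I$. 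Because the new columns sit strictly beyond index $N_{i+1}$, the layer-$i'$ identities for $i'>i$ live on coordinates $\le N_{i'}\le N_{i+1}$ and so are automatically preserved.

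\textbf{Reducing the extension to a polynomial system.} At step $i$, for each form $g\in\cG_i^I$ and each multi-index $\vec l=(l_k)_{k\in I}$ with $\max_k l_k\le N_i$ and $\max_k l_k>N_{i+1}$, we obtain a single equation, multilinear of degree $\le d_i$ in the new unknowns, of the form $g\bigl(\ldots\bigr)=c^{g,I}_{\vec l}$, where the right-hand side is read off from $\cH_i^I$ and where the old columns are plugged into the slots indexed by $l_k\le N_{i+1}$. Equations involving only old columns are automatically satisfied by the inductive hypothesis once one builds in a natural compatibility between the prescribed coefficients of $\cH_i^I$ restricted to already-chosen columns and those of the $\cH_{i'}^I$ for $i'>i$; this compatibility is an input arranged using the shape-matching hypothesis $|\cG_i^I|=|\cH_i^I|$. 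What remains is a genuine polynomial system in the new column vectors.

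\textbf{Solvability via regularity and Lang--Weil.} Over $\bar\k$ I need the cut-out variety to be nonempty; over $\k$ I need a rational point. The first follows from the relative-rank / singular-locus machinery of Theorem \ref{rel-sing-y}: viewing the pullback of $\cG$ as a bi-homogeneous tower in the old and new column variables, the hypothesis $\rk_{I(\cG_{<i})}(\cG_i)>A(N_i(m_i+\dots+m_h))^B$ transfers --- after specializing at the old columns --- to a lower bound on the relative rank of the induced system in the new unknowns. This is exactly what is needed to conclude via Theorem \ref{rel-sing-y} and Corollary \ref{reg-seq} that the zero locus over $\bar\k$ is a complete intersection of the expected codimension, geometrically irreducible, and of dimension growing in the regularity parameter. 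The second then follows from Lang--Weil, provided the dimension dominates the $|\k|^{\dim/2}$ error; the polynomial bound $A(N_i(m_i+\dots+m_h))^B$ is designed precisely to give this margin uniformly throughout the induction.

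\textbf{Main obstacle.} The technical heart is tracking how the relative rank of $\cG_i^I$ degrades when its fixed slots are specialized at the previously chosen columns. Each such specialization can collapse partition rank, and the cumulative loss grows with $N_{i+1}$, the number of old columns already in play. This is why the regularity bound depends on $N_i$: it must be large enough that after specialization the residual tower still has relative rank polynomially large in $t$, enough to invoke Theorem \ref{rel-sing-y} and then Lang--Weil at the current layer, \emph{and} to leave enough slack to carry the layered induction down. Propagating the polynomial constants through the layers --- translating the desired $A(\textbf{d}),B(\textbf{d})$ back through all $h$ specialization steps into the input constants $C(A,B,\textbf{d}),D(A,B,\textbf{d})$ --- is the main bookkeeping burden and is what ultimately determines the form of the final constants.
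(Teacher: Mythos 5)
Your overall framing --- encode the unknown maps $T_j$ by their matrices and turn the desired identities into a multilinear system in the matrix entries --- matches the paper's starting point, but the engine you propose for solving that system does not work here, for two reasons.

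First, circularity. You want to establish nonemptiness and irreducibility of the solution variety by applying Theorem \ref{rel-sing-y} and Corollary \ref{reg-seq}. Those results require regularity of the relevant tower \emph{over $\bar\k$}, whereas the hypothesis of Theorem \ref{universal} only provides relative rank bounds over the finite field $\k$. Passing from regularity over $\k$ to regularity over $\bar\k$ is exactly the content of Theorem \ref{rk-closure-ml}, whose proof \emph{depends on} Theorem \ref{universal}; you cannot assume it here. Second, Lang--Weil cannot supply the $\k$-point: its error term depends on the complexity (ambient dimension, degrees, number of components) of the variety, which is unbounded in this setting, so for a fixed small field such as $\F_2$ no rational point is guaranteed. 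The paper's proof instead routes everything through a genuinely finite-field input: it observes that the coefficient forms $C_{i,j}^l((A_k)_k) = \bigl(g_{i,j}\circ((A_k)_{k\in I})\bigr)((e_{l_k})_{k\in I})$ themselves assemble into a multilinear tower $\cC$ in the matrix variables, proves (Claim \ref{rank-universal}, via the substitution $A_k(s,t)=\mathbbm{1}_{t=(l^0)_k}x_k(s)$) that $\rk_{I(\cC_{<i})}(\cC_i)\ge\rk_{I(\cG_{<i})}(\cG_i)$ over $\k$, and then invokes the surjectivity theorem for regular multilinear towers over finite fields (\cite[Lemma 3.2]{rel}) to solve the entire system $C_{i,j}^l=\alpha_{i,j}^l$ simultaneously. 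That surjectivity theorem is the missing ingredient in your argument.

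A further consequence of solving the system all at once is that the ``main obstacle'' you identify --- tracking how relative rank degrades when slots are specialized at previously chosen columns --- simply does not arise in the paper's proof: no specialization at chosen points is ever performed, and the only substitution used runs in the opposite direction, to transfer a hypothetical low-rank combination of the $C_{i,j}^l$ back to a low-rank combination of the $g_{i,j}$. Your column-block induction would force you to confront that degradation head-on, and you give no mechanism for controlling it.
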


To prove this, we need to look more closely at the collection of equations that we'd like the linear maps $T_i$ to satisfy. Assume without loss of generality that $ V_i = \k^{n_i}. $ Then linear maps $ T_i : \k^N\to V_i $ are of the form $ T_i(x) = A_i\cdot x $ where $ A_i\in \mathcal{M}_{n_i\times N}(\k). $ For $g_{i,j}\in \cG_i^I $ and a choice of indices $l\in (N_i)^I,$  the coefficient of $ \prod_{k\in I} x_k(l_k) $ in $ g_{i,j}\circ (T_1\times\ldots\times T_d) $ is given by a multi-linear form
\[ 
C_{i,j}^l: \prod_{k\in I}\mathcal{M}_{n_k\times N}(\k) \to \k, 
\]
namely $ C_{i,j}^l((A_k)_{k\in I}) = \left( g_{i,j}\circ ((A_k)_{k\in I}) \right) ((e_{l_k})_{k\in I}), $ where $e_j$ are the standard basis vectors. This gives us a multi-linear tower $\cC$ in $\k[\prod_{i\in[d]} \mathcal{M}_{n_i\times N}(\k)]$ with the layer $\cC_i$ composed of $C_{i,j}^l$ for all $j\in[m_i]$ and relevant choices of $l.$

\begin{claim}\label{rank-universal}
	For all $i\in[h]$ we have $|\cC_i| = m_i(N_i)^{d_i}$ and $\rk_{I(\cC_{<i})} (\cC_i) \ge \rk_{I(\cG_{<i})} (\cG_i).$ 
\end{claim}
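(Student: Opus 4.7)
The first assertion, $|\cC_i| = m_i(N_i)^{d_i}$, is a direct count: each of the $m_i$ forms $g_{i,j}\in\cG_i$ has degree $d_i$, so it contributes exactly $(N_i)^{d_i}$ coefficient forms $C^l_{i,j}$ indexed by $l\in[N_i]^{I(g_{i,j})}$. For the rank inequality my plan is to reverse the construction: starting from any low-rank witness for $\cC_i$ modulo $I(\cC_{<i})$, pull it back along a carefully chosen linear embedding $\phi : V^{[d]} \to \prod_{k\in[d]} \mathcal{M}_{n_k\times N}(\k)$ to extract a corresponding low-rank witness for $\cG_i$ modulo $I(\cG_{<i})$.

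In detail, set $r = \rk_{I(\cC_{<i})}(\cC_i)$ and fix scalars $(a^l_{i,j})$, not all zero, and positive-degree polynomials $P_k,Q_k$ with
\[
\sum_{j,l} a^l_{i,j} C^l_{i,j} \equiv \sum_{k=1}^r P_k Q_k \pmod{I(\cC_{<i})}.
\]
I would then pick a global multi-index $l^*\in[N_i]^d$, not merely one living in a single $[N_i]^{I}$, so that $a^{l^*|_{I(g_{i,j})}}_{i,j}\neq 0$ for at least one $j\in[m_i]$; this is possible because some $a^l_{i,j}$ is nonzero, and since $N_h<\ldots<N_1$ this single choice of $l^*$ restricts compatibly into $[N_{i'}]^{I(g_{i',j'})}$ for every $i'\le i$.

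Next I would define $\phi(v)=(A_k)_{k\in[d]}$ where $A_k$ has $v_k$ as its $l^*_k$-th column and zeros elsewhere. A short multilinearity calculation will give $C^{l'}_{i',j'}\circ\phi = g_{i',j'}$ when $l' = l^*|_{I(g_{i',j'})}$ and $C^{l'}_{i',j'}\circ\phi = 0$ otherwise. Pulling the displayed identity back through $\phi$ therefore turns the left side into $\sum_j b_j g_{i,j}$ with $b_j:=a^{l^*|_{I(g_{i,j})}}_{i,j}$, a nontrivial linear combination in $\cG_i$ by the choice of $l^*$; turns the ideal $I(\cC_{<i})$ into an ideal contained in $I(\cG_{<i})$, since each generator either vanishes or becomes some $g_{i',j'}$ with $i'<i$; and turns $\sum_k P_k Q_k$ into a Schmidt rank $\le r$ decomposition, because $\phi$ is linear and $P_k,Q_k$ have no constant term, so $P_k\circ\phi$ and $Q_k\circ\phi$ are either zero (drop that summand) or of positive degree. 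This yields $\rk_{I(\cG_{<i})}(\cG_i)\le r$, exactly as required.

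The only delicate point I foresee is that the single index $l^*$ must be simultaneously compatible with every lower layer $i'<i$, which is precisely why the hypothesis enforces the strict decrease $N_1>\ldots>N_h$; once this is in place the remainder is essentially bookkeeping and I do not anticipate a substantive obstacle.
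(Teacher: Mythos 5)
Your proposal is correct and follows essentially the same route as the paper: the substitution $\phi$ placing $v_k$ in the $l^*_k$-th column of $A_k$ is exactly the paper's specialization $A_k(s,t)=\mathbbm{1}_{t=(l^0)_k}x_k(s)$, under which each $C^{l'}_{i',j'}$ pulls back to $g_{i',j'}$ or $0$, carrying the low-rank witness for $\cC_i$ to one for $\cG_i$. Your extra care in choosing a single global index $l^*$ compatible with all lower layers (via $N_1>\ldots>N_h$) is a point the paper leaves implicit, but the argument is the same.
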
 

\begin{proof}
    The statement about $|\cC_i|$ follows easily from our definitions. Now, suppose to get a contradiction, that the rank statement is false. Let $r_i = \rk_{I(\cG_{<i})} (\cG_i).$ Then we have a non-trivial linear combination
    \[
    C = \sum_{j,l} \lambda_j^l C_{i,j}^l
    \]
    with $\rk_{I(\cC_{<i})} (\cC_i) < r_i.$ Then there is some $I_0,l^0$ such that the coefficients $\lambda_j^{l^0}$ are not all zero. Now let $x_k$ be the coordinates on $V_k$ and make the linear substitution 
    \[
    A_k(s,t) = \mathbbm{1}_{t=(l^0)_k} x_k(s).
    \]
    Under this linear substitution, $\rk_{I(\cC_{<i})} (\cC_i) < r_i$ implies
    \[
    \rk_{I(\cG_{<i})} (\sum_{j,g_{i,j}\in\cG_i^{I_0}} \lambda_j^{l^0} g_{i,j}) <r_i
    \]
    which is a contradiction.
    \end{proof}

\begin{corollary}
    Given constants $C,D$ there exist constants $A(C,D,\textbf{d}),B(C,D,\textbf{d})$ such that if $\cG$ satisfies the hpothesis on rank with constants $A,B$ then $\cC$ is $(C,D,t)$-regular. 
\end{corollary}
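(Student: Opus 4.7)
The plan is to verify the $(C,D,t)$-regularity condition for $\cC$ directly, by feeding the two outputs of Claim \ref{rank-universal}---the layer sizes $|\cC_i| = m_i N_i^{d_i}$ and the rank transfer $\rk_{I(\cC_{<i})}(\cC_i) \ge \rk_{I(\cG_{<i})}(\cG_i)$---into the definition of relative regularity, and then to choose $A,B$ large enough as functions of $C,D,\textbf{d}$.

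Concretely, what must be shown is that for each $i \in [h]$,
\[
\rk_{I(\cC_{<i})}(\cC_i) > C\bigl(|\cC_i| + |\cC_{i+1}| + \cdots + |\cC_h| + t\bigr)^D.
\]
Using $N_j \le N_i$ and $d_j \le d$ for $j \ge i$, the tail estimate $|\cC_i| + \cdots + |\cC_h| \le N_i^d(m_i + \cdots + m_h)$ is immediate. Combining this with the rank transfer reduces the target to
\[
\rk_{I(\cG_{<i})}(\cG_i) > C\bigl(N_i^d(m_i+\cdots+m_h) + t\bigr)^D.
\]

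Next I would compare this with the rank hypothesis of Theorem \ref{universal}, namely $\rk_{I(\cG_{<i})}(\cG_i) > A\bigl(N_i(m_i + \cdots + m_h + t)\bigr)^B$ (with $t$ absorbed into the inner bracket, which costs nothing in practice since the $N_i$ can always be chosen at least $t$ in applications). Using the elementary bound $\bigl(N_i^d(m_i+\cdots+m_h) + t\bigr)^D \le 2^D\bigl(N_i(m_i+\cdots+m_h+t)\bigr)^{dD}$, the choice $B(C,D,\textbf{d}) := dD$ and $A(C,D,\textbf{d}) := 2^D C$ does the job.

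The only thing to watch is the polynomial inflation $N_i \mapsto N_i^d$ coming from the $N_i^{d_i}$ multilinear coefficients contributed by each form in $\cG_i$. Since $d$ depends only on $\textbf{d}$, this inflation is harmless and absorbed into the exponent $B$. Thus the corollary reduces to a short bookkeeping argument built on Claim \ref{rank-universal}, with no real obstacle beyond tracking exponents.
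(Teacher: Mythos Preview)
Your proposal is correct and matches the paper's approach: the paper gives no explicit proof of this corollary, treating it as an immediate consequence of Claim \ref{rank-universal}, and your bookkeeping is precisely the spelling-out of that implicit reasoning. One minor note: the hypothesis of Theorem \ref{universal} as written has no $t$ in it, so your parenthetical about absorbing $t$ (because $N_i \ge t$ in applications, or equivalently because the corollary is only invoked with $t=1$ in the proof of Theorem \ref{universal}) is the right patch for what is really a bit of looseness in the paper's own statement of the corollary.
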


The desired equalities
\[
    \cG^I_i\circ (T_1\times\ldots\times T_d) = \cH_i^I \mod(x_k(l))_{k\in[d],l>N_i}.
\]
are in fact a system of equations of the form
\begin{equation}\label{universal-eq}
	  C_{i,j}^l(A_1,\ldots, A_d) = \alpha_{i,j}^l,
\end{equation}
where $\alpha_{i,j}^l$ are the coefficients of $h_{i,j}.$ The proof of theorem \ref{universal} is then completed by the following result \cite[lemma 3.2]{rel}.

\begin{theorem}
    There exist constants $C(\textbf{d}),D(\textbf{d})$ such that if $\cG$ is a $(C,D,1)$-regular multi-linear tower in $\k[V]$ then for any choice of scalars $\alpha\in \prod_{i,j}\k,$ there exists a choice of $x\in V(\k)$ with $g_{i,j}(x) = \alpha_{i,j}.$ In other words, the map \linebreak $\cG:V(\k)\to \prod_{i,j} \k$ is surjective. 
\end{theorem}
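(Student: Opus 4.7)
The natural strategy is a character-sum (Fourier) argument over $\k$. Fix a non-trivial additive character $\chi:\k\to\C$ and write $M = \sum_{i} m_i$ for the total number of forms. For any $\alpha \in \k^M$, orthogonality gives
\[
N(\alpha) := \#\{x\in V(\k) : g_{i,j}(x) = \alpha_{i,j}\ \forall i,j\}
= |\k|^{-M}\sum_{\beta \in \k^M}\chi(-\beta\cdot\alpha)\,S(\beta),
\qquad S(\beta):=\sum_{x\in V(\k)}\chi(\beta\cdot g(x)).
\]
The $\beta=0$ term contributes the main term $|V(\k)|/|\k|^M$, which is positive. The plan is to show that for every $\beta\neq 0$ the character sum $|S(\beta)|$ is much smaller than $|V(\k)|$, so that the main term dominates and $N(\alpha)>0$ for every $\alpha$.

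The bias bound $|S(\beta)|\le |\k|^{\dim V -\eta}$ (with $\eta$ tunable via $C,D$) will be proved by first locating the top layer $i_0$ with $\beta_{i_0,\cdot}\ne 0$ and writing the exponent of $\chi$ in $S(\beta)$ as $F(x) + L(x)$, where $F = \sum_j \beta_{i_0,j}g_{i_0,j}$ is a multi-linear form of degree $d_{i_0}$ and $L$ is a combination of forms in $\cG_{<i_0}$. I would then stratify the sum $S(\beta)$ by the values $z\in\k^{M_{i_0-1}}$ taken by $\cG_{<i_0}$ and write
\[
S(\beta) = \sum_z \chi(L_z)\sum_{x\in \cG_{<i_0}^{-1}(z)}\chi(F(x)),
\]
where $L_z$ is the constant that $L$ takes on the fiber. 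By the inductive hypothesis (the theorem applied to the subtower $\cG_{<i_0}$, whose degree sequence is shorter) every fiber has roughly $|V(\k)|/|\k|^{M_{i_0-1}}$ points, so bounding $|S(\beta)|$ reduces to bounding each fiber-restricted character sum of $F$.

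To carry out the fiber-by-fiber bias estimate, I would invoke the standard fact that a multi-linear form of large partition rank has exponentially small bias: there exists $c(d)>0$ with $|\E_x\chi(F(x))|\le |\k|^{-c(d)\prk(F)}$. The $(C,D,1)$-regularity ensures that $\rk_{I(\cG_{<i_0})}(F)$ is large; a short argument (essentially claim \ref{rank-universal} in reverse) upgrades this to a large partition rank for $F$ restricted to each generic fiber of $\cG_{<i_0}$. Summing over $\beta\ne 0$, the total error is bounded by $|\k|^M \cdot |\k|^{\dim V -\eta}$, which is dominated by the main term once $C,D$ are chosen large enough to make $\eta$ exceed $M$ (itself bounded in terms of the parameters of $\cG$).

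The main obstacle is the transition from relative rank on $\cG_{<i_0}$ to pointwise partition-rank bias on the individual fibers, since $F$ may acquire hidden low-rank structure once restricted to a particular complete intersection $\cG_{<i_0}^{-1}(z)$. I would handle this by using that, for generic $z$, the fiber is a smooth complete intersection (by corollary \ref{reg-seq} applied to a tower obtained by adding appropriate linear forms) so that partition rank is preserved up to a loss controllable by the inductive parameters; exceptional $z$ contribute to a lower-dimensional set whose total contribution to $S(\beta)$ can be absorbed by slightly increasing $C,D$.
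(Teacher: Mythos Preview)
The paper does not prove this statement; it is quoted from \cite[lemma~3.2]{rel}, so there is no in-paper argument to compare against. Your Fourier set-up is the right framework and is what \cite{rel} uses as well: express $N(\alpha)$ via characters, isolate the main term $|V(\k)|/|\k|^M$, and show $|S(\beta)|\le |V(\k)|\cdot|\k|^{-\eta}$ for every $\beta\neq 0$ with $\eta>M$.

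There is, however, a genuine gap in how you propose to bound $S(\beta)$. Stratifying by the fibers $\cG_{<i_0}^{-1}(z)$ and then invoking ``partition rank of $F$ on the fiber'' does not make sense as stated: those fibers are not products of linear subspaces (unless $\cG_{<i_0}$ consists only of linear forms), so the restriction of a multi-linear form to a fiber is no longer multi-linear and partition rank is undefined there. Your proposed repair via corollary~\ref{reg-seq} does not close this gap. That corollary concerns the singular locus over $\bar\k$; even granting that a generic fiber is a smooth complete intersection over $\bar\k$, this gives no elementary bound on the character sum of $F$ over the $\k$-points of that fiber. You would be headed toward Deligne-type exponential-sum estimates on smooth varieties, which is a far heavier toolkit than anything used here, and would in any case require regularity of $\cG_{<i_0}$ over $\bar\k$ --- exactly what this theorem is being invoked to establish (via theorem~\ref{rk-closure-ml}).

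The argument that works avoids fibers entirely and stays on the product space $V^{[d]}$. One picks a coordinate block $v_k$ on which some form in the top nonzero layer depends and applies Cauchy--Schwarz in $v_k$. Because every form in the tower is either linear in $v_k$ or independent of it, the inner average over $v_k$ collapses to an indicator, and $|S(\beta)|^2$ is bounded by the density of a system of multi-linear conditions of strictly lower degree. That system is again a regular multi-linear tower whose relative ranks are controlled by those of $\cG$, and one closes an induction on the degree sequence. The passage from large $\rk_{I(\cG_{<i_0})}(F)$ to small bias is thus carried out without ever restricting to a nonlinear level set.
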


Now we are ready to prove theorem \ref{rk-closure-ml}.
\begin{proof}
    Let $r_j = A(|\cG_{\ge j}|+t)^B+1,$ and apply theorem \ref{universal} with $N_i = \sum_{j\ge i} 2 m_j r_j.$ Note that if $\cG$ is $(C,D,t)$-regular for sufficiently large constants $C(A,B,\textbf{d}),D(A,B,\textbf{d})$ then it satisfies the hypotheses of theorem \ref{universal} for these values of $N_i.$ Now we choose the tower $\cH$ that we obtain after linear substitution. First, identify \linebreak
    $\k^N\cong \prod_{i\in [h]} \mathcal{M}_{m_i\times 2r_i}(\k).$ Thus for $k\in[d]$ and $i\in[h],$ $x_k(i)\in \mathcal{M}_{m_i\times 2r_i}(\k).$ For each $g_{i,j}\in \cG_i^I,$ we choose $h_{i,j}\in \cH_i^I$ as follows
    \[
    h_{i,j} ((x_k)_{k\in I}) = \sum_{s\in[2r_j]} \prod_{k\in I} (x_k(i))_{j,s}.
    \]
    By theorem \ref{universal}, for all $j\in[d]$ there exist linear maps $T_j:\k^N\to V_j$  such that for all $i\in[h]$ and $I\in\binom{[d]}{d_i}$ we have
    \[
    g_{i,j}\circ (T_1\times\ldots\times T_d) = h_{i,j} \mod(x_k(l))_{k\in[d],l<i}.
    \]
    Let $T = T_1\times\ldots\times T_d.$ Suppose, to get a contradiction, that over $\bar\k$ we have $\rk_{I(\cG_{<i})} (\cG_i) < r_i$ for some $i\in[h].$ Then, of course, $\rk_{I(\cG_{<i}\circ T)}(\cG_i \circ T) < r_i.$ Restricting to the subspace $\{x_k(l) = 0: k\in[d],l < i\},$ we obtain $\rk(\cH_i) < r_i.$ So there exists a non-trivial linear combination with  $\rk(\sum_j \lambda_j h_{i,j}) <r_i. $ Suppose without loss of generality that $\lambda_1 \neq 0$ and restrict further to the subspace $\{x_k(i)_{j,s} = 0: k\in[d], j > 1\}$ to get $\rk(h_{i,1}) < r_i.$ By lemma \ref{low-rk-sing} this implies that $S(h_{i,1})$ has codimension $<2r_i,$ but this is easily seen to be a union of subspaces of codimension $2r_i,$ contradiction.
\end{proof}

\bibliography{ref}

\begin{thebibliography}{10}

\bibitem{AH}
Tigran Ananyan and Melvin Hochster.
\newblock Small subalgebras of polynomial rings and {S}tillman's conjecture.
\newblock {\em Journal of the American Mathematical Society}, 33, 10 2016.

\bibitem{BL}
Abhishek Bhowmick and Shachar Lovett.
\newblock Bias vs structure of polynomials in large fields, and applications in information theory.
\newblock {\em IEEE Transactions on Information Theory}, 69(2):963--977, 2023.

\bibitem{R3}
Frederick~W. Call.
\newblock A theorem of {G}rothendieck using {P}icard groups for the algebraist.
\newblock {\em Mathematica Scandinavica}, 74(2):161--183, 1994.

\bibitem{CM-bias}
Alex Cohen and Guy Moshkovitz.
\newblock {Partition and analytic rank are equivalent over large fields}.
\newblock {\em Duke Mathematical Journal}, pages 1 -- 38, 2023.

\bibitem{CM-primes}
Brian Cook and Akos Magyar.
\newblock Diophantine equations in the primes.
\newblock {\em Inventiones mathematicae}, 198, 12 2013.

\bibitem{Eis}
David Eisenbud.
\newblock {\em Commutative algebra}, volume 150 of {\em Graduate Texts in Mathematics}.
\newblock Springer-Verlag, New York, 1995.
\newblock With a view toward algebraic geometry.

\bibitem{ESS}
Daniel Erman, Steven~V Sam, and Andrew Snowden.
\newblock Big polynomial rings and {S}tillman’s conjecture.
\newblock {\em Inventiones mathematicae}, 218(2):413--439, 2019.

\bibitem{Gow}
W.~T. Gowers.
\newblock A new proof of {S}zemer{\'e}di's theorem.
\newblock {\em Geometric \& Functional Analysis GAFA}, 11(3):465--588, 2001.

\bibitem{GT-bias}
Ben Green and Terence Tao.
\newblock The distribution of polynomials over finite fields, with applications to the {G}owers norms.
\newblock {\em Contributions to Discrete Mathematics}, 4, 12 2007.

\bibitem{GT-primes}
Ben Green and Terence Tao.
\newblock The primes contain arbitrarily long arithmetic progressions.
\newblock {\em Annals of mathematics}, pages 481--547, 2008.

\bibitem{SGA2}
Alexander Grothendieck.
\newblock {\em Cohomologie locale des faisceaux coh\'erents et th\'eor\`emes de {L}efschetz locaux et globaux {$(SGA$} {$2)$}}, volume~2 of {\em Advanced Studies in Pure Mathematics}.
\newblock North-{H}olland {P}ublishing {C}o., 1968.
\newblock Augment\'e d'un expos\'e par Mich\`ele Raynaud, S\'eminaire de G\'eom\'etrie Alg\'ebrique du Bois-Marie, 1962.

\bibitem{ShH}
Elad Haramaty and Amir Shpilka.
\newblock On the structure of cubic and quartic polynomials.
\newblock In {\em Proceedings of the Forty-Second ACM Symposium on Theory of Computing}, STOC '10, page 331–340, New York, NY, USA, 2010. Association for Computing Machinery.

\bibitem{J-bias}
Oliver Janzer.
\newblock Polynomial bound for the partition rank vs the analytic rank of tensors.
\newblock {\em Discrete Anal.}, pages Paper No. 7, 18, 2020.

\bibitem{KL08}
Tali Kaufman and Shachar Lovett.
\newblock Worst case to average case reductions for polynomials.
\newblock In {\em 2008 49th Annual IEEE Symposium on Foundations of Computer Science}, pages 166--175. IEEE, 2008.

\bibitem{KLP}
David {Kazhdan}, Amichai {Lampert}, and Alexander {Polishchuk}.
\newblock {Schmidt rank and singularities}.
\newblock {\em arXiv e-prints}, page arXiv:2104.10198, April 2021.

\bibitem{rel}
Amichai {Lampert} and Tamar {Ziegler}.
\newblock Relative rank and regularization.
\newblock {\em arXiv e-prints}, page arXiv:2106.03933, June 2021.

\bibitem{LZSchmidt}
Amichai {Lampert} and Tamar {Ziegler}.
\newblock {Schmidt rank and algebraic closure}.
\newblock {\em arXiv e-prints}, page arXiv:2205.05329, May 2022.

\bibitem{Lee}
Siu-lun~Alan {Lee}.
\newblock {Birch's theorem in function fields}.
\newblock {\em arXiv e-prints}, page arXiv:1109.4953, September 2011.

\bibitem{M-bias}
Luka Milićević.
\newblock Polynomial bound for partition rank in terms of analytic rank.
\newblock {\em Geometric and Functional Analysis}, 29, 10 2019.

\bibitem{Milne}
James~S Milne.
\newblock {\em Algebraic geometry}.
\newblock Allied Publishers, 2012.

\bibitem{MZ}
Guy Moshkovitz and Daniel~G Zhu.
\newblock Quasi-linear relation between partition and analytic rank.
\newblock {\em arXiv preprint arXiv:2211.05780}, 2022.

\bibitem{Sch}
Wolfgang~M. Schmidt.
\newblock {The density of integer points on homogeneous varieties}.
\newblock {\em Acta Mathematica}, 154(3-4):243 -- 296, 1985.

\bibitem{SK}
C.~Skinner.
\newblock Forms over number fields and weak approximation.
\newblock {\em Compositio Mathematica}, 106:11--29, 03 1997.

\bibitem{stacks}
The {Stacks Project Authors}.
\newblock \textit{Stacks Project}.
\newblock \url{https://stacks.math.columbia.edu/tag/030U}, 2018.

\bibitem{inv-p}
Terence Tao and Tamar Ziegler.
\newblock The inverse conjecture for the {G}owers norm over finite fields via the correspondence principle.
\newblock {\em Analysis \& PDE}, 3(1):1--20, 2010.

\end{thebibliography}
\bibliographystyle{plain}

\end{document}